\documentclass[11pt,a4paper]{amsart}
\usepackage{rotating}
\usepackage[all]{xy}
\usepackage[utf8]{inputenc}
\usepackage{amscd,amssymb,amsopn,amsmath,amsthm,graphics,amsfonts,enumerate,verbatim,calc}

\numberwithin{equation}{section}

\newtheorem{theorem}{Theorem}[section]

\newtheorem{proposition}[theorem]{Proposition}
\newtheorem{corollary}[theorem]{Corollary}
\newtheorem{notation}[theorem]{Notation}

\newtheorem{observation}[theorem]{Observation}

\theoremstyle{definition}

\newtheorem{definition}[theorem]{Definition}
\theoremstyle{remark}
\newtheorem{remark}[theorem]{Remark}
\newtheorem{fact}[theorem]{Fact}
\newtheorem{example}[theorem]{Example}
\newtheorem{question}[theorem]{Question}\newtheorem{discussion}[theorem]{Discussion}
\newtheorem{conjecture}[theorem]{Conjecture}

\newtheorem{acknowledgement}{Acknowledgement}

\newcommand{\Ass}{\operatorname{Ass}}
\newcommand{\Syz}{\operatorname{Syz}}

\newcommand{\Spec}{\operatorname{Spec}}

\newcommand{\Char}{\operatorname{Char}}

\newcommand{\id}{\operatorname{id}}

\newcommand{\pd}{\operatorname{pd}}

\newcommand{\Ext}{\operatorname{Ext}}

\newcommand{\Tor}{\operatorname{Tor}}
\newcommand{\Hom}{\operatorname{Hom}}

\newcommand{\depth}{\operatorname{depth}}

\newcommand{\im}{\operatorname{im}}

\newcommand{\lo}{\longrightarrow}
\newcommand{\fm}{\mathfrak{m}}
\newcommand{\fp}{\mathfrak{p}}

\DeclareMathOperator{\Frac}{Frac}

\begin{document}
	
\author[M. Asgharzadeh and E. Mahdavi]{Mohsen Asgharzadeh and Elham Mahdavi}
	\date{}
\title[Non-Noetherian Bass and Betti numbers]{Non-Noetherian Bass and Betti numbers}
	
\address{M. Asgharzadeh}
	\email{mohsenasgharzadeh@gmail.com}
	
\address{E. Mahdavi}
\email{elham.mahdavi.gh@gmail.com}
	
\subjclass[2020]{13D07; 13A35; 13B22}
\keywords{absolute integral closure; Bass number; Betti number; non-Noetherian module; injective dimension; perfect closure; Tor module; vanishing theorems}
	
\begin{abstract}
This paper investigates the vanishing and non-vanishing of Betti  and Bass numbers  for  non-finitely generated modules. We prove that for \(d\)-dimensional Cohen--Macaulay local rings, every non-zero \(\mathfrak{m}\)-torsion module satisfies \(\beta_d(M)\neq 0\), and we establish the Betti number behavior of the injective hull \(E_R(k)\). We study tor-rigidity for \(H^d_{\mathfrak{m}}(R)\). We also provide partial positive answers to Schoutens' question on whether the vanishing of some Betti number  of a big Cohen--Macaulay algebra forces the Cohen--Macaulay property of \(R\). For the absolute integral closure \(R^+\), we establish both Tor and Ext results. On the Tor side, we prove that \(\beta_i(R^+)=0\) for some \(i>0\) implies regularity in a series cases. On the Ext side, we prove that \(\mu_i(R^+)=0\) for some \(i> d\) forces regularity for Gorenstein domains of prime characteristic, and we obtain analogous results for graded normal domains of dimension \(2\) and also for quotient and isolated singularities in any dimension. Also $\mu_i(R^\infty)=0$ forces regularity for isolated singularity.
	\end{abstract}
	
	\maketitle
	
\section{Introduction}
Let \((R,\mathfrak{m},k)\) be a Noetherian local ring, and $M$ be an $R$-module.
The study of \textit{Betti numbers} \(\beta_i^R(M)=\dim_k \operatorname{Tor}_i^R(k,M)\) and \textit{Bass numbers} \(\mu^i_R(M)=\dim_k \operatorname{Ext}_R^i(k,M)\) is fundamental to commutative algebra, providing deep insights into the structure of modules and rings. For finitely generated modules over regular local rings, the Auslander--Buchsbaum formula and the theory of dualizing complexes completely govern the behavior of these invariants. However, when modules are not finitely generated, or when the base ring is singular, classical results often fail and new phenomena emerge. This paper systematically investigates the vanishing and non-vanishing of both Tor and Ext in these broader contexts, with particular emphasis on modules arising from local cohomology, injective hulls, and the absolute integral closure.
	
We place special emphasis on the Bass and Betti numbers of the following non-Noetherian objects:
\[
\big\{H^d_{\mathfrak m}(R),\ E_R(k),\ R^\infty,\ R^+\big\},
\]where \(H^d_{\mathfrak m}(R)\) denotes the top local cohomology module of $R$ with respect to $\fm$,
and \(E_R(k)\) is the injective hull of the residue field \(k\). Also, the two fundamental infinite integral extensions in characteristic \(p>0\) are the \emph{perfect closure} \(R^\infty\)—obtained by adjoining all \(p\)-power roots so that Frobenius becomes an isomorphism—and the \emph{absolute integral closure} \(R^+\), the maximal integrally closed extension formed by adjoining roots of every monic polynomial (this construction is independent of characteristic). Originating in Artin's work~\cite{Ar}, \(R^+\) was later employed by Hochster and Huneke~\cite{HH} as a source of big Cohen–Macaulay algebras when \(\operatorname{char}(R)=p\). Today, both closures are central to tight closure theory, perfectoid geometry, and the study of singularities in positive and mixed characteristic. 
In equal characteristic zero, however, these objects have received comparatively little scrutiny. A key question, posed by Bhatt, Iyengar, and Ma, asks whether Tor-vanishing over \(R^+\) forces regularity—a problem that remains open in general and serves as the main motivation for the homological perspective developed in this work.
	
	\begin{question}\label{Bim}(See \cite{Bim}).
	If \( R \) is a  domain of equicharacteristic zero   and \(\beta_i(R^+)=0\) for some \(i \geq 1\), then is \(R\) regular?
\end{question}
	
	This question has attracted significant attention. In dimension \(2\), Patankar \cite{p} proved that \(\beta_i(R^+)=0\) forces regularity when a graded ring \(R\) that contains \(\mathbb{Q}\). 
	We provide affirmative answers for some classes of rings, including the following observation: \begin{theorem}\label{17}The following are valid:
		\begin{itemize}
			\item[(a)] If \(k\subseteq R\)  is a quotient singularity and \(\beta_i(R^+)=0\) for some \(i>0\), then \(R\) is regular.
			\item[(b)] 
		
			Suppose \(R\) is a homogeneous domain of dimension  over $\mathbb{C}$. If \(R\) is Cohen--Macaulay and of finite Cohen--Macaulay type and \(\beta_i(R^+)=0\) for some \(i>0\), then 
			\(R\) is a UFD and of multiplicity at most two. 
	\item[(c)] 	Suppose \(R\) is a \(d\)-dimensional Cohen--Macaulay local ring containing \(\mathbb{Q}\). If \(d>2\), then \(\beta_1(R^+)\neq 0\).	\end{itemize}
		
	\end{theorem}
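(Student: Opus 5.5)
I would prove the three parts separately. All three rest on one observation: in characteristic zero, $R\to R^+$ tends to split, and a split inclusion makes $\Tor^R_i(k,R)$ a direct summand of $\Tor^R_i(k,R^+)$ only in degree $0$. So for higher degrees I need a finer device. The natural one is a module-finite normal extension $R\subseteq S$ with $S\subseteq R^+$ and $R^+=S^+$. Since $S^+$ is a directed union of module-finite normal extensions $T$ of $S$, and Tor commutes with direct limits, I get $\Tor^R_i(k,R^+)=\varinjlim_T \Tor^R_i(k,T)$. In characteristic zero each $S\to T$ splits via the normalized trace $\frac{1}{[L:K]}\mathrm{Tr}$. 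Hence $\Tor^R_i(k,S)$ is a direct summand of $\Tor^R_i(k,R^+)$, compatibly along the system. It follows that $\beta_i(R^+)=0$ forces $\Tor^R_i(k,S)=0$ for every such finite normal $S$, in particular for $S=R$ itself (the trace splits $R\to R^+$ over $R$, and the retraction is functorial). Thus $\Tor^R_i(k,R)=0$ gives nothing, and I must instead exploit nontrivial finite $S$ for which $\Tor^R_i(k,S)$ is nonzero.

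For (a), write $R=A^G$ with $A$ regular (a power series or polynomial ring over $k$) and $G$ a finite group acting without pseudoreflections, which I may assume by Chevalley--Shephard--Todd. Then $A$ is a finite normal extension of $R$ inside $R^+$, so the above gives $\Tor^R_i(k,A)=0$. Now $A$ is a finitely generated maximal Cohen--Macaulay $R$-module, and every module of covariants $(A\otimes V)^G$ is a summand of $A^{\oplus\dim V}$. In particular $A\cong\bigoplus_V M_V^{\dim V}$, and $R=M_{\mathrm{triv}}$ occurs in it. By the Auslander--Buchsbaum formula, or simply because vanishing of one Tor for a finitely generated module over a local ring forces finite projective dimension (by minimal resolutions), $A$ has finite projective dimension. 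Since $A$ is maximal Cohen--Macaulay, $A$ is free over $R$. A free $A$ over $A^G$ is impossible unless $G$ is generated by pseudoreflections; this is the Chevalley--Serre converse. Hence $G=1$ and $R=A$ is regular. The main subtlety is justifying the trace splitting compatibly in the directed system, and the case where $k$ is not algebraically closed.

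For (b), I use the same reduction. Every indecomposable maximal Cohen--Macaulay module of a finite CM-type ring appears, up to summands, inside finite extensions, since such rings are quotient singularities in the graded $\mathbb{C}$ case. By the known classification (Auslander, Esnault, Eisenbud--Herzog), a homogeneous finite-type CM domain over $\mathbb{C}$ is regular, a rational normal scroll of a special kind, or a quadric hypersurface. Then I argue as in (a): $\Tor_i(k,S)=0$ for the covering finite extension forces the class group to be trivial, which gives UFD, and forces multiplicity at most $2$, since scrolls of degree $\ge3$ have a non-free summand. This case-check against the classification is the step I expect to be hardest.

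For (c), I take $S$ a module-finite normal extension, and argue that $\Tor^R_1(k,R^+)=0$ means $R^+$ is "flat in degree one", i.e. $\fm R^+\cap$ relations behave like $\fm\otimes R^+$. Choose a system of parameters $x_1,\dots,x_d$. Over $R^+$, for $d>2$ one can use the vanishing $H^i_\fm(R^+)$ failures in characteristic zero: Koszul homology $H_1(x;R^+)\neq0$ when $d\ge3$, since $R^+$ is not Cohen--Macaulay in characteristic zero in dimension $\ge3$ (the standard example via $H^2$ of a nonrational cone in the graded/local analytic setting). Since $R$ is Cohen--Macaulay, the Koszul complex on $x$ resolves $R/(x)$, so $\Tor^R_1(R/(x),R^+)=H_1(x;R^+)\neq0$. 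Filtering $R/(x)$ by copies of $k$ and using exactness of the long Tor sequence, the nonvanishing of $\Tor_1$ with a finite-length module forces $\Tor_1(k,R^+)\neq0$, by induction on length and right exactness in the top spot.
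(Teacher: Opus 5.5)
\textbf{Part (b) has a genuine gap.} Your reduction to (a) rests on the claim that homogeneous domains of finite Cohen--Macaulay type over $\mathbb{C}$ are quotient singularities. This is false in dimension $\ge 3$. Two entries of the Eisenbud--Herzog list are neither quotient singularities nor UFDs:
\begin{itemize}
\item the three-dimensional quadric cone $R=\mathbb{C}[x,y,z,w]/(xw-yz)$, of multiplicity $2$, with $\operatorname{Cl}(R)\cong\mathbb{Z}$ generated by $\mathfrak p=(x,y)$;
\item the ring of case (vii), the cone over the cubic scroll in $\mathbb{P}^4$, of multiplicity $3$ and class group $\mathbb{Z}$.
\end{itemize}
Quotient singularities have finite class group. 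Worse, since $\operatorname{Cl}(R)$ is not torsion here, a norm argument shows these rings have no module-finite regular extension domain at all, so there is no analogue of the cover $A$ from (a).

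Your sentences saying that the vanishing ``forces the class group to be trivial'' and that ``scrolls of degree $\ge 3$ have a non-free summand'' have no mechanism behind them. A non-free maximal Cohen--Macaulay $R$-module only gives a contradiction if it is a direct summand of some module-finite normal $S\subseteq R^+$. You exhibit no such $S$, and in dimension $3$ such extensions need not be Cohen--Macaulay. These two cases are exactly where the work lies.

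The paper handles them as follows. The scroll, a normal toric ring, is excluded by the result of \cite{MP}. For $xw-yz$ it uses Roberts' module-finite normal extension $A\cong R\oplus\mathfrak p^{(-2)}\eta\subseteq R^+$ from \cite{Rob}. The splitting gives $\Tor^R_i(k,\mathfrak p^{(-2)})=0$, so the divisorial ideal $\mathfrak p^{(-2)}$ would have finite projective dimension. The paper turns this into a contradiction with $A$ not being Cohen--Macaulay; alternatively, a divisorial ideal of finite projective dimension is principal, contradicting $2[\mathfrak p]\neq 0$. Only after these exclusions do you get ``UFD of multiplicity $\le 2$'' (regular rings and quadrics of dimension $\ge 4$). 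Your recalled list also omits case (viii), though that one is a quotient singularity and falls to (a).

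\textbf{Part (a)} matches the paper up to the last step. The paper concludes from flatness of $R\to A$ with $A$ regular that $R$ is regular; you instead use Serre's converse to Chevalley--Shephard--Todd. Both work, and your reduction to small $G$ is unnecessary, since freeness of $A$ over $A^G$ already makes $A^G$ a polynomial ring.

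\textbf{Part (c)} takes a genuinely different route with wider scope. The paper only treats $d=3$, via $\depth R^+=2$ and the proposition that $\depth M=d-1$ forces $\beta_1(M)\neq 0$, and quotes \cite{p} for $d>3$. Your identification $\Tor^R_1(R/(\underline x),R^+)=H_1(\underline x;R^+)$, together with the finite-length d\'evissage, covers every $d\ge 3$ at once.

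However, you must state the input correctly. For a non-finitely generated module, failure of $\underline x$ to be a regular sequence does not force $H_1(\underline x;R^+)\neq 0$; compare the Bartijn--Strooker example. What you need is a module-finite normal domain $R\subseteq S\subseteq R^+$ that is not Cohen--Macaulay. Then $H_1(\underline x;S)\neq 0$ because $S$ is finitely generated, and the trace retraction $R^+\to S$ makes it a direct summand of $H_1(\underline x;R^+)$. The existence of such an $S$ for an arbitrary $R$ is the same external input the paper takes from Hochster.
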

Recall that item~(c) completes an argue initiated in \cite{p} by dealing with the remaining  case $d=3$.
Our investigation is organized around three interconnected themes (1)---(3):
	
(1) \textit{Non-vanishing of top Betti numbers:} For a \(d\)-dimensional local ring, when is \(\beta_d(M) \neq 0\)? We show that for Cohen--Macaulay rings, any non-zero \(\mathfrak{m}\)-torsion module has a non-zero \(d\)-th Betti number. This has direct consequences for the injective hull \(E_R(k)\), for which we prove \(\beta_d(E_R(k)) \neq 0\) and \(\beta_i(E_R(k))=0\) for \(i<d\), as was previously well-known over regular rings. These results extend classical non-vanishing theorems of Grothendieck. A sample result is to present a generalization to \(\mathfrak{m}\)-torsion modules.
		\begin{theorem}\label{13}
			Suppose \((R,\mathfrak{m})\) is a Cohen--Macaulay local ring of dimension \(d\). Let \(\Omega\) be an \(\mathfrak{m}\)-torsion module. Then $\Omega=0$  if and only if $\beta_d(\Omega)=0.$
		\end{theorem}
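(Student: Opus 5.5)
One direction is trivial: if $\Omega=0$ then $\beta_d(\Omega)=0$. So assume $\Omega\neq 0$ is $\mathfrak{m}$-torsion and show $\operatorname{Tor}_d^R(k,\Omega)\neq 0$. The plan is to compute this Tor via a system of parameters instead of $k$. We may complete, or simply work over $R$, since $k$ and $\mathfrak{m}$-torsion modules do not change under $\widehat{R}\otimes_R -$.

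Choose a system of parameters $\underline{x}=x_1,\dots,x_d$. Since $R$ is Cohen--Macaulay, it is a regular sequence, so the Koszul complex $K(\underline{x})$ is a free resolution of $R/(\underline{x})$. Hence
\[
\operatorname{Tor}_d^R(R/(\underline{x}),\Omega)\cong H_d(\underline{x};\Omega)\cong (0:_\Omega (\underline{x})).
\]
Because $\Omega\neq0$ is $\mathfrak{m}$-torsion, it has a nonzero element killed by $\mathfrak{m}$, and so this socle-type module is nonzero. Thus $\operatorname{Tor}_d(N,\Omega)\neq 0$ for $N=R/(\underline{x})$, a module of finite length.

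Next we transfer this nonvanishing to $k$. The claim to prove is that $\operatorname{Tor}_d(-,\Omega)$ is right exact on finite length modules, i.e.\ $\operatorname{Tor}_{d+1}(L,\Omega)=0$ whenever $L$ has finite length. Assuming the claim, choose a composition series $0\to \mathfrak{m}N'\ldots$, or more simply any short exact sequence $0\to N_1\to N\to k\to 0$ with $N_1$ of finite length. We then get
\[
\operatorname{Tor}_d(N_1,\Omega)\to \operatorname{Tor}_d(N,\Omega)\to \operatorname{Tor}_d(k,\Omega)\to 0 .
\]
This alone does not force nonvanishing of the right-hand term. Instead, argue by contradiction: suppose $\operatorname{Tor}_d(k,\Omega)=0$. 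By induction on length, using the long exact sequence and the vanishing of $\operatorname{Tor}_{d+1}$ terms, $\operatorname{Tor}_d(L,\Omega)=0$ for every finite length $L$, in particular for $L=N$. That contradicts the nonvanishing established above.

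The key step, and the main obstacle, is the vanishing $\operatorname{Tor}_{d+1}(L,\Omega)=0$ for finite length $L$. Such a vanishing would follow from the depth formula $\operatorname{pd} L = d$ when $R$ is regular, but $\operatorname{pd} L$ may be infinite over a Cohen--Macaulay ring. So I would not use that route. The induction only requires $\operatorname{Tor}_{d+1}(k,\Omega)$ to vanish whenever $\operatorname{Tor}_d(k,\Omega)$ does, which is a rigidity statement and is hard to prove directly.

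A better route avoids this. If $\operatorname{Tor}_d(k,\Omega)=0$, then $\operatorname{Tor}_d(L,\Omega)$ is still controlled by the sequences $0\to L'\to L\to k\to 0$, which give
\[
\operatorname{Tor}_d(L',\Omega)\to\operatorname{Tor}_d(L,\Omega)\to \operatorname{Tor}_d(k,\Omega)=0 .
\]
So $\operatorname{Tor}_d(L,\Omega)$ is an image of $\operatorname{Tor}_d(L',\Omega)$. Running the induction with $L'$ a submodule of smaller length, down to $L'=k$, shows that $\operatorname{Tor}_d(L,\Omega)$ is a successive quotient built from copies of $\operatorname{Tor}_d(k,\Omega)=0$. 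Concretely, $\operatorname{Tor}_d(L,\Omega)$ is an image of $\operatorname{Tor}_d(L',\Omega)$, which is zero by induction on length. This uses only the right part of the long exact sequence and needs no $\operatorname{Tor}_{d+1}$ vanishing. Applying it to $L=R/(\underline{x})$ gives $(0:_\Omega(\underline{x}))=0$, which contradicts $\Omega\neq0$ being $\mathfrak{m}$-torsion.

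The argument is then complete. The only real inputs are the Koszul resolution, which uses the Cohen--Macaulay hypothesis, and the fact that a nonzero $\mathfrak{m}$-torsion module has nonzero socle.
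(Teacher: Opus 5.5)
Your proof is correct. Its first half is exactly the paper's first step: you assume \(\operatorname{Tor}_d^R(k,\Omega)=0\) and use only the right end \(\operatorname{Tor}_d(L',\Omega)\to\operatorname{Tor}_d(L,\Omega)\to\operatorname{Tor}_d(k,\Omega)\) to kill \(\operatorname{Tor}_d(L,\Omega)\) for every finite-length \(L\).

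The routes diverge in how the contradiction is reached. The paper extends the vanishing to artinian modules via direct limits and applies it to the artinian module \(H^d_{\mathfrak m}(R)\). It then invokes the external isomorphism \(\Omega\cong\operatorname{Tor}_d^R(H^d_{\mathbf x}(R),\Omega)\) for \(\mathfrak m\)-torsion \(\Omega\) from \cite{moh4}. You instead stop at the single finite-length module \(R/(\underline{x})\). From the Koszul resolution you read off \(\operatorname{Tor}_d^R(R/(\underline{x}),\Omega)\cong H_d(\underline{x};\Omega)=(0:_\Omega(\underline{x}))\neq 0\).

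Your argument is in effect a finite-level truncation of the paper's, since \(H^d_{\mathfrak m}(R)=\varinjlim_t R/(\underline{x}^t)\) and \(\bigcup_t(0:_\Omega(\underline{x}^t))=\Gamma_{\mathfrak m}(\Omega)\). What it buys:
\begin{itemize}
\item It is self-contained and needs neither direct limits nor the outside citation.
\item It visibly uses only \(\Gamma_{\mathfrak m}(\Omega)\neq 0\). So it shows \(\beta_d(M)\neq 0\) for every \(R\)-module \(M\) with \(\Gamma_{\mathfrak m}(M)\neq 0\), extending the \((\Leftarrow)\) half of Proposition~\ref{71} from regular to Cohen--Macaulay rings.
\end{itemize}
The paper's route, in exchange, realizes all of \(\Omega\), not just an annihilator submodule, as a Tor module. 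That fits the \v{C}ech/local-cohomology viewpoint used in Proposition~\ref{prop:non-rigid}.

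In a write-up, drop the abandoned detour about \(\operatorname{Tor}_{d+1}\)-vanishing; nothing in your final argument needs it.
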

		
(2) \textit{Rigidity and local cohomology:} It is well known that the top local cohomology module \(H^d_{\mathfrak{m}}(R)\) is a fundamental invariant encoding deep information about the singularities of \(R\). We demonstrate that \(H^d_{\mathfrak{m}}(R)\) is  {not} tor-rigid in general by showing that for a Cohen--Macaulay ring \(R\),
		\(
		\operatorname{Tor}_i^R(M, H^d_{\mathfrak{m}}(R)) \cong H^{d-i}_{\mathfrak{m}}(M),
		\)
		and exploiting the depth of \(M\). This provides a rich source of counterexamples to naive generalizations of rigidity and highlights the subtle interplay between Tor, local cohomology, and depth.
		
(3) \textit{The absolute integral closure \(R^+\):} {Question} \ref{Bim} serves as a central motivating problem. We make significant progress from both Tor and Ext perspectives. 
A related problem, due to Schoutens, concerns the behavior of big Cohen--Macaulay algebras. Suppose \(S\) is an \(R\)-algebra that is big Cohen--Macaulay and satisfies \(\beta_i(S)=0\) for some  \(i\). Schoutens asked in \cite{sc1} whether this forces \(R\) to be Cohen--Macaulay. We provide two partial positive answers.
	
	\begin{theorem}\label{14}
		Let \(S\) be an \(R\)-algebra that is big Cohen--Macaulay.  Assume one of the following:
		\begin{itemize}
			\item[(i)] \(R\) is an isolated singularity, and \(\beta_i(S)=0\) for some \(i \geq \dim(R)\).
			\item[(ii)] \(S\) is weakly tor-rigid and \(\beta_3(S)=0\).
		\end{itemize}
		Then \(R\) is Cohen--Macaulay.
	\end{theorem}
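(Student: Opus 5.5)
The plan for both items is to reduce to one known fact: if a big Cohen--Macaulay $R$-algebra (or module) $S$ has finite flat dimension over $R$, then $R$ is Cohen--Macaulay. This follows from the New Intersection Theorem. Equivalently, $\operatorname{fd}_R S<\infty$ together with $H_j(\underline{x};S)=0$ for $j>0$ and every system of parameters $\underline{x}$ forces $\depth R=\dim R$, by the Auslander--Buchsbaum type equality $\operatorname{fd}_R S=\depth R-\depth S$ for modules of finite flat dimension. So in each case it suffices to show that $\operatorname{Tor}^R_j(k,S)=0$ for all $j\gg 0$. Since $\beta_j(S)=0$ is just $\operatorname{Tor}_j^R(k,S)=0$, the task is to propagate the single given vanishing to all higher degrees.

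For (ii) this is essentially the definition. Weak tor-rigidity of $S$ means that the vanishing of $\operatorname{Tor}_n^R(M,S)$ for a finitely generated $M$ (in particular $M=k$) forces $\operatorname{Tor}_j^R(M,S)=0$ for all $j\geq n$. Applying it with $M=k$ and $n=3$ gives $\operatorname{fd}_R S\leq 2<\infty$, and the reduction above finishes. I would check whether the specific index $3$ is used only to fit the paper's definition of weak tor-rigidity; if so, the argument works verbatim for any index.

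For (i), set $d=\dim R$ and suppose $\operatorname{Tor}_i^R(k,S)=0$ with $i\geq d$.
\begin{itemize}
\item Let $\Omega=\Syz_{i-1}(k)$ be the $(i-1)$-st syzygy in a minimal free resolution. Dimension shifting gives $\operatorname{Tor}_1^R(\Omega,S)=0$ and, more generally, $\operatorname{Tor}_{j}^R(\Omega,S)\cong\operatorname{Tor}_{j+i-1}^R(k,S)$.
\item Since $R$ is an isolated singularity, for every non-maximal prime $\fp$ the ring $R_\fp$ is regular. Moreover $k_\fp=0$, so $\Omega_\fp$ is a syzygy of $0$, hence free. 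Therefore every $\operatorname{Tor}_j^R(\Omega,S)$ with $j\geq1$ is $\fm$-torsion.
\item The goal is to show these $\fm$-torsion modules vanish for $j\ge 1$. Take a system of parameters $\underline{x}=x_1,\dots,x_d$; it is $S$-regular since $S$ is big Cohen--Macaulay. I would compare $\Omega\otimes_R K(\underline{x};S)$ with $K(\underline{x};\Omega\otimes_R S)$, or run a spectral sequence on $F_\bullet\otimes_R K(\underline{x})\otimes_R S$, with $F_\bullet$ a free resolution of $\Omega$.
\item Because the Koszul complex on $S$ is acyclic, the spectral sequence degenerates on one side. On the other side the terms are Koszul homologies of the $\fm$-torsion modules $T_j=\operatorname{Tor}_j^R(\Omega,S)$.
\item For an $\fm$-torsion module $T\neq 0$, the top Koszul homology $H_d(\underline{x};T)=(0:_T\underline{x})$ is nonzero. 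This is the analogue for Koszul homology of the non-vanishing in Theorem \ref{13}, and it should force the lowest nonzero $T_j$ to contribute a non-cancelling class in total degree $j+d$.
\item The hypothesis $i\geq d$ should make the syzygy $\Omega$ deep enough that the totals in those degrees are zero, giving a contradiction. Hence all $T_j=0$, so $\operatorname{Tor}_j^R(k,S)=0$ for every $j\ge i$, and the reduction applies.
\end{itemize}

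The main obstacle is the spectral-sequence bookkeeping in the last three steps. Concretely, I must show that the first nonzero $\fm$-torsion $T_j$ cannot be cancelled by differentials from the lower terms $\Omega\otimes S$ and $T_{j'}$ with $j'<j$. This is exactly where I expect $i\geq d$ to be needed, through $\depth\Omega\geq\min(i-1,\depth R)$ and the $d$-step length of the Koszul complex. If the spectral sequence proves unwieldy, my first fallback is induction on $d$. One would pass to $R/x_1R$ and $S/x_1S$, using that $x_1$ is $S$-regular so that $\operatorname{Tor}^R_j(k,S)\cong\operatorname{Tor}^{R/x_1}_j(k,S/x_1S)$ up to a shift. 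The difficulty there is that $x_1$ need not be $R$-regular, so this change of rings must be justified.
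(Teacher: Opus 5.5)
Your proposal has genuine gaps in both parts. The main ones are a misreading of the hypothesis in (ii) and an unproved central step in (i).

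\textbf{Part (ii): the hypothesis is misread.} Your argument uses a notion of weak tor-rigidity that is not the paper's. In the paper, $S$ is weakly tor-rigid if, for a \emph{parameter ideal} $I$, $\Tor^R_j(R/I,S)=0$ forces $\Tor^R_{j+1}(R/I,S)=0$. So the only test modules are the $R/I$. The module $k=R/\fm$ is among them only if $\fm$ is generated by a system of parameters, i.e.\ only if $R$ is already regular. You therefore cannot apply the hypothesis with $M=k$, $n=3$, and what you prove is a different statement; this is why the index $3$ looked irrelevant to you.

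The paper works with $R/I$ instead. First, $\Tor^R_1(R/I,S)=0$ because $I$ is generated by an $S$-regular sequence, and weak rigidity then gives $\Tor^R_2(R/I,S)=0$. It then tensors $0\to\Syz_2(S)\to F_1\to F_0\to S\to 0$ with $R/I$ and uses $\beta_3(S)=0$ to argue that $S/IS$ is flat over $R/I$. Hence $IS\cap R=I$, and Schoutens' criterion gives Cohen--Macaulayness.

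A route closer to yours also works. Iterating weak rigidity gives $\Tor^R_j(R/I,S)=0$ for all $j\ge1$, so the augmentation $K(\underline x;R)\to R/I$ becomes a quasi-isomorphism after $-\otimes^{\mathbf L}_R S$. The lowest nonzero homology $L$ of its cone is a finite-length module with $L\otimes_R S\neq 0$. It survives as a corner of the K\"unneth spectral sequence, so the cone is acyclic and $\underline x$ is $R$-regular.

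\textbf{Part (i): the key propagation step is missing.} You need to propagate the single vanishing $\Tor^R_i(k,S)=0$, $i\ge d$, to all $j\ge i$. This is not proved, and the mechanism you sketch cannot supply it. The double complex $F_\bullet\otimes_R K(\underline x)\otimes_R S$ abuts to
\[
\Tor^R_n(\Omega,S/\underline xS)\cong\Tor^R_{n+i-1}(k,S/\underline xS),
\]
the Betti numbers of the $\fm$-torsion module $S/\underline xS$. Nothing in your hypotheses (in particular not $\depth\Omega$) forces these to vanish, so there is no reason for ``the totals in those degrees'' to be zero. Nor does any role for $i\ge d$ emerge. The paper imports exactly this step from Christensen--Iyengar--Marley \cite[1.1]{IYENGAR}.

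\textbf{Both parts: eventual Tor vanishing does not give finite flat dimension.} Eventual vanishing of $\Tor^R_j(k,S)$ does not give $\operatorname{fd}_R S<\infty$ for non-finitely generated $S$, because it sees nothing at non-maximal primes. For example, if $\fp\neq\fm$ and $R_\fp$ is singular, then $k\otimes^{\mathbf L}_R k(\fp)=0$ but $\operatorname{fd}_R k(\fp)=\infty$. This is precisely where the paper uses the isolated-singularity hypothesis. Schoutens' criterion (Fact~\ref{shh1}) needs the vanishing only on $\mathrm{Sing}(R)=\{\fm\}$, which gives $\pd_R S<\infty$. Then Bartijn--Strooker's formula $\operatorname{resdim}(S)=\depth R-\depth S$ yields $\depth R\ge\dim R$. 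Your version with $\operatorname{fd}_R S$ in place of $\operatorname{resdim}(S)$ is false in general.

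In your sketch the isolated singularity is used only to see that $\Tor^R_j(\Omega,S)$ is $\fm$-torsion. That is automatic, since it equals $\Tor^R_{j+i-1}(k,S)$.

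Your reduction itself can be justified without flat dimension. In the spectral sequence $\Tor^R_p(H_q(\underline x;R),S)\Rightarrow H_{p+q}(\underline x;S)$, take the corner term at $p=\sup\{j:\beta_j(S)\neq0\}$ and $q=d-\depth R$. It is nonzero (embed $k$ in the socle of $H_q(\underline x;R)$) and it survives. This forces $p+q=0$, i.e.\ $\depth R=d$.
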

	
	This result connects the vanishing of Betti numbers of big Cohen--Macaulay algebras to the Cohen--Macaulay property of the base ring, offering a new perspective on the relationship between homological dimensions and singularity theory. A sample tor-rigidity result we obtain over any \(3\)-dimensional ring $R$ and   a reflexive module \(M\) is that
	\begin{center}	\(
		\Tor_i^R(M,k)=0
		\)
		  \(\exists i \Rightarrow
		\Tor_j^R(M,k)=0
		\)  \(\forall j>i\).
\end{center}

	In light of the Ext results above, we pose the following natural analogue of Question~\ref{Bim}: if \(R\) is a local domain and \(\mu_i(R^+)=0\) for some \(i \geq \dim(R)+1\), must \(R\) be regular? This cannot be settled by formal duality: although Matlis duality gives
	\(
	\operatorname{Ext}_R^i(k, R^+)^\vee \cong \operatorname{Tor}_i^R(k, (R^+)^\vee),
	\)
	the obstruction is that \((R^+)^\vee \ncong R^+\), see \cite[Corollary 4.12]{Bim}, where \(H^+_{\mathfrak m}((R^+)^\vee)=0\) but \(H^d_{\mathfrak m}(R^+)\neq 0\) at least in prime characteristic, \(d>0\). Thus the dual question is not a formal consequence of Matlis duality and seems to demand new ideas.
We present the following:
\begin{theorem}	\label{18}\begin{itemize}
\item[(i)]  If \(R\) is an \(\mathbb{N}\)-graded normal domain of dimension \(2\) over an equicharacteristic zero field \(k\), and \(\mu_{i}(R^{+})=0\) for some \(i> 2\), then \(R\) is regular.
\item[(ii)] If \(R\) is a local ring of dimension \(d\) with a quotient and isolated singularity containing \(\mathbb{Q}\), and \(\mu_{i}(R^{+})=0\) for some \(i>d\), then \(R\) is regular.
			
\item[(iii)] 	Let \((R,\mathfrak{m})\) be a Gorenstein local ring  isolated singularity containing \(\mathbb{Q}\). If \(\mu_{i}(R^{+})=0\) for some \(i>\dim(R)\), then \(R\) is regular.
\item[(iv)] 	Let $\mathbb{F}_p\subseteq R$ be a $d$-dimensional ring with isolated singularity. If 
either	$
\mu_{i}(R^\infty) = 0$, or $
		\mu_{i}(R^+) = 0$
for some $i > d$, then $R$ is regular.
\item[(v)] 	Suppose \(\mathbb{Q}\subseteq R\) has an isolated singularity,  $(S_2)$, and \(\dim R\ge 4\). If \(R\) is  almost complete intersection and \(\operatorname{Ext}_R^i(k,R^+)=0\) for some \(i>d\), then \(R\) is a UFD.
			
				\end{itemize}
	\end{theorem}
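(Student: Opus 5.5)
The plan is to use one common device for all five items, which is to pass from $R^+$ (or $R^\infty$) to its finite pieces. Since $R$ is Noetherian, $k$ has a resolution by finite free modules. Hence $\operatorname{Ext}^i_R(k,-)$ commutes with direct limits, and
\[
\operatorname{Ext}^i_R(k,R^+)\cong\varinjlim_S \operatorname{Ext}^i_R(k,S),
\]
where $S$ runs over the module-finite normal extension domains of $R$ inside $R^+$. The analogous formula holds for $R^\infty=\varinjlim F^e_*R$. The first consequence I will extract, in characteristic zero, is Gorensteinness. For normal $R$ containing $\mathbb{Q}$, the normalized trace splits each $R\to S$ compatibly, so $R\to R^+$ splits. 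Then $\operatorname{Ext}^i_R(k,R)$ is a direct summand of $\operatorname{Ext}^i_R(k,R^+)$. So $\mu_i(R^+)=0$ for one $i>d$ gives $\mu_i(R)=0$ for that $i$. By the Bass / Roberts (Fossum--Foxby--Griffith--Reiten) theorem, this forces $\operatorname{id}R<\infty$, i.e. $R$ is Gorenstein. In items (i), (ii) and (v) I will first reduce to the normal case. In (i) and (v) normality is given or follows from $(S_2)$ plus the isolated singularity, and in (ii) it holds because quotient singularities are normal.

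The second step upgrades Gorenstein to regular by choosing a good finite piece $S$. For (ii), write $\widehat R=T^G$ with $T$ regular. Because the singularity is isolated, $T$ is étale over $R$ in codimension one and we may take $S=T$ inside $R^+$. Then $\operatorname{Ext}^i_R(k,T)\cong\operatorname{Ext}^i_T(k\otimes^{\mathbf L}_RT,T)$, and $T$ is regular local, hence Gorenstein. Local duality over $T$ identifies this with the Matlis dual of $\operatorname{Tor}^R_{i-d}(k,T)$, up to a shift. Passing to the colimit and using that the pieces $T\subseteq S'$ are cofinal, the hypothesis becomes a Tor-vanishing $\beta_{i-d}(R^+)=0$. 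Theorem~\ref{17}(a) then gives regularity. For (iii) I will run the same duality argument: since $R$ is Gorenstein, $\mathbf{R}\!\operatorname{Hom}_R(k,R)\simeq k[-d]$, and I will try to show that $\operatorname{Ext}^{i}_R(k,R^+)$ is dual to $\operatorname{Tor}^R_{i-d}(k,(R^+)^\vee)$. The isolated singularity hypothesis makes every module that arises locally free on the punctured spectrum, so the comparison map has finite-length cone, and this is where I expect to lose control. For (i), Patankar's graded two-dimensional classification, via the Tor result quoted before Theorem~\ref{17}, replaces this step after the same duality reduction. For (v), the Gorenstein conclusion together with the almost complete intersection hypothesis makes $R$ a complete intersection. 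Since $d\ge4$ and the singularity is isolated, Grothendieck's parafactoriality (the Samuel conjecture) then yields that $R$ is a UFD.

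For (iv), in characteristic $p$, I will argue directly with Frobenius instead of splittings. Using $R^\infty=\varinjlim F^e_*R$ and $\operatorname{Ext}^i_R(k,F^e_*R)\cong F^e_*\operatorname{Ext}^i_R(F^{e*}k,R)$, the vanishing for $R^\infty$ becomes the vanishing of $\operatorname{Ext}^i_R(R/\mathfrak m^{[q]},R)$ in the limit over $q$. On the punctured spectrum $R$ is regular, so Kunz makes $F$ flat there, and all the relevant modules have finite length. A Koh--Lee/Avramov--Miller type criterion, namely that $\operatorname{Ext}$-vanishing against Frobenius pushforwards detects regularity, then finishes. For $R^+$ I will use that $R^+$ is big Cohen--Macaulay (Hochster--Huneke). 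This gives $\mathbf{R}\Gamma_{\mathfrak m}(R^+)\simeq H^d_{\mathfrak m}(R^+)[-d]$, so the hypothesis translates to $\operatorname{Ext}^{i-d}_R(k,H^d_{\mathfrak m}(R^+))=0$, and the Frobenius is bijective on $R^+$, which feeds into the same criterion.

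The main obstacle is the second step in (ii) and (iii): converting an Ext-vanishing on $R^+$ into a Tor-vanishing to which Theorem~\ref{17} applies. As the introduction warns, $(R^+)^\vee\not\cong R^+$, so the duality must be done on finite pieces before taking the limit. The first thing I will try is to check that the finite-length error terms vanish in the colimit.
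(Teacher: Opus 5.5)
The step that carries (i)--(iii) does not go through. You propose to turn $\mu_i(R^+)=0$ into $\beta_{i-d}(R^+)=0$ by local duality on finite pieces and then pass to the colimit.

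For a Cohen--Macaulay piece $S$, local duality reads $\Ext^i_R(k,S)\cong\Tor^R_{i-d}(k,\omega_S)^\vee$, with $\omega_S$ in place of $S$ unless $S$ is Gorenstein. Non-Cohen--Macaulay pieces, which exist in general once $d\ge 3$, require the full dualizing complex. Moreover, the transition maps $\Ext^i_R(k,S)\to\Ext^i_R(k,S')$ are Matlis dual to the trace maps $\Tor^R_{i-d}(k,\omega_{S'})\to\Tor^R_{i-d}(k,\omega_S)$. So vanishing of $\varinjlim_S\Ext^i_R(k,S)$ is a statement about an inverse system of canonical modules. It says nothing about $\Tor^R_{i-d}(k,R^+)=\varinjlim_S\Tor^R_{i-d}(k,S)$; this is exactly the $(R^+)^\vee\ncong R^+$ obstruction flagged in the introduction. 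As written, (i) therefore has no argument, and (iii) stays open, as you suspected.

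The missing idea is to bypass duality and argue with homological dimensions, as the paper does. The rigidity theorem \cite[1.2]{IYENGAR} together with Schoutens' criterion \cite{sc3} turns a single vanishing $\mu_i(R^+)=0$ with $i>d$ into $\id_R R^+<\infty$. This is where Cohen--Macaulayness and the isolated singularity are used. Your first step, that $R$ is Gorenstein by trace splitting plus Roberts, is exactly the paper's. Over a Gorenstein ring, finite injective dimension is equivalent to finite flat dimension, so $\Tor^R_j(k,R^+)=0$ for $j\gg0$. Patankar's Theorem~A then finishes (i), and Corollary~\ref{qu} finishes (ii).

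For (ii), your piece $T$ already suffices if you drop the colimit. Take $T$ to be the (localized) polynomial ring covering $R=k[x_1,\dots,x_n]^G$ itself; a cover of $\widehat R$ lives in $(\widehat R)^+$, not in $R^+$. Since $T$ is normal and contains $\mathbb{Q}$, it is an $R$-direct summand of $T^+=R^+$, so $\Ext^i_R(k,T)=0$. Your duality over the regular ring $T$ then gives $\Tor^R_{i-d}(k,T)=0$ with $i-d\ge1$. Because $T$ is finitely generated, this forces $\pd_R T<\infty$. Auslander--Buchsbaum then makes $T$ free over $R$, so $R$ is regular. This is shorter than the paper's proof of (ii), and needs neither Schoutens' theorem nor the isolated singularity hypothesis.

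For (iv), the identity $\Ext^i_R(k,F^e_*R)\cong F^e_*\Ext^i_R(R/\fm^{[q]},R)$ holds only with the derived pullback $k\otimes^{\mathbf L}_R F^e_*R$ in place of $R/\fm^{[q]}$. The two differ because $\Tor^R_1(k,F^e_*R)\neq0$ when $R$ is singular (Kunz). So the reduction to $\Ext^i_R(R/\fm^{[q]},R)$ fails. The ``Koh--Lee/Avramov--Miller type criterion'' is never pinned down, and ``Frobenius is bijective on $R^+$'' is not yet an argument.

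The paper again goes through injective dimension (Propositions~\ref{811} and~\ref{517}). First, \cite[1.2]{IYENGAR} gives $\mu_j=0$ for all $j\ge i$. The isolated singularity and \cite[Theorem VI.9]{sc3} then give finite injective dimension of $R^\infty$ (resp.\ $R^+$). Finally, \cite[Example 3.17(i)]{ryo} converts this into regularity.

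Your treatment of (v) (trace splitting, Roberts, Kunz, Grothendieck's parafactoriality) coincides with the paper's.

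For (iii), be aware that the paper offers no separate argument. The Schoutens--Gorenstein route yields only $\operatorname{fd}_R R^+<\infty$, and deducing regularity from that in characteristic zero is Question~\ref{Bim} itself. The abstract places the Gorenstein statement in prime characteristic, where Proposition~\ref{811} applies.
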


The paper is organized as follows. Section 2 collects preliminary counterexamples, showing, for instance, that the tensor product of two non-generalized Cohen--Macaulay modules can be generalized Cohen--Macaulay. Then we present our main results on tensor products and generalized Cohen--Macaulayness, extending classical theorem  of Huneke--Wiegand. 
	We close this section with the corresponding property of big Cohen-Macaulay modules, and the behavior of $\beta_1(-)$ under tensor product. 
Subsection 3.1 presents the proof of {Theorem} \ref{14}.
	Subsection 3.2 provides a detailed analysis of \(\beta_1(H^1_{\mathfrak{m}}(R))\) for a specific non-Cohen--Macaulay ring, illustrating the failure of rigidity.
In Section 4, we prove {Theorem} \ref{13}, and explore consequences for the injective hull \(E_R(k)\), showing \(\beta_d(E_R(k))\neq 0\) while lower Betti numbers vanish.  Also, it contains further results on Matlis reflexive modules and the relationship between Betti numbers and local cohomology.
	Section 5 is devoted to the proof of theorems  \ref{17} and \ref{18}.

\begin{notation}	Throughout this paper, all rings are commutative   with identity, and all modules are unital. For a noetherian  and local ring \((R,\mathfrak{m},k)\), we denote by \(E_R(k)\) the injective hull of the residue field. Also, $d$ is used for Krull dimension, otherwise specialized. For an \(R\)-module \(M\), we write \(\beta_i^R(M)=\dim_k \operatorname{Tor}_i^R(k,M)\) for the \(i\)-th Betti number and \(\mu^i_R(M)=\dim_k \operatorname{Ext}_R^i(k,M)\) for the \(i\)-th Bass number. The \(i\)-th local cohomology module with support in \(\mathfrak{m}\) is denoted by \(H^i_{\mathfrak{m}}(M)\). The absolute integral closure of a domain \(R\) is denoted by \(R^+\).
	\end{notation}

	\section{Tensor product and  Cohen--Macaulay variations}
	
In this section, we collect several results addressing the following natural question: if the tensor product \(M \otimes_R N\) is (generalized, or even big) Cohen--Macaulay, when can one conclude that each of the factors \(M\) and \(N\) enjoys the same property? This question was raised in   \cite[Question 1.1]{cel}:

	\begin{question}
		Let \(R\) be a complete intersection. Suppose \(M\otimes_R N\) is maximal Cohen--Macaulay. When does it follow that \(M\) or \(N\) is maximal Cohen--Macaulay?
	\end{question}
	
	\begin{remark}(See \cite[Theorem 1.4]{cel}).
		A partial answer is suggested by the following: if \(M\) is maximal Cohen--Macaulay and \(\dim R > \operatorname{codim}(R)\), then \(N\) is maximal Cohen--Macaulay.
	\end{remark}
	
The next example demonstrates that the generalized Cohen--Macaulay property is not preserved under tensor products, in the sense that both factors need not be generalized Cohen--Macaulay even if their tensor product is.

\begin{example}
	Let \(R = k[[x_1,\ldots,x_n, y_1,\ldots,y_n]]\) with \(n \ge 3\). Set \(M = R/I\) with
	\[
	I = (x_1,\ldots,x_n) + (y_1) \cap (y_2, y_3),
	\]
	and \(N = R/J\) with
	\[
	J = (y_1,\ldots,y_n) + (x_1) \cap (x_2, x_3).
	\]
	Then
	\(
	M \otimes_R N = R/(I + J) = R/\mathfrak{m},
	\)
	which is generalized Cohen--Macaulay.
The module \(M\) is not equidimensional, as there exists
	\[
	\mathfrak{p} \in \operatorname{Ass}(M) = \operatorname{Ass}(M) \setminus \{\mathfrak{m}\}
	= \{(x_1,\ldots,x_n) + (y_1),\ (x_1,\ldots,x_n) + (y_2, y_3)\}
	\]
	such that \(\dim(R/\mathfrak{p}) \neq \dim(R/I)\). Similarly, \(N\) is not equidimensional on the punctured spectrum.
By \cite[Example 9.5.7(1)]{BH}, if \(M\) were generalized Cohen--Macaulay, then \(\dim(R/\mathfrak{p}) = \dim(M)\) for all \(\mathfrak{p} \in \operatorname{Ass}(M) \setminus \{\mathfrak{m}\}\), and \(M_{\mathfrak{p}}\) would be Cohen--Macaulay. Hence \(M\) and \(N\) are not generalized Cohen--Macaulay, but their tensor product \(M \otimes_R N = R/\mathfrak{m}\) is.
\end{example}

\begin{proposition}
	Suppose \(R\) is a complete Cohen--Macaulay domain with an isolated singularity. Let \(M\) and \(N\) be \(R\)-modules with \(\dim M = \dim N = \dim R\). If \(M \otimes_R N\) is generalized Cohen--Macaulay, then both \(M\) and \(N\) are generalized Cohen--Macaulay.
\end{proposition}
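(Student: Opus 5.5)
The plan is to reduce everything to the punctured spectrum. There we will show that $M$ and $N$ are in fact locally free. Then we invoke the standard characterization of generalized Cohen--Macaulay modules over a complete local ring \cite[Example 9.5.7]{BH}: a finitely generated module $L$ is generalized Cohen--Macaulay if and only if, for every $\fp\in\operatorname{Supp}(L)\setminus\{\fm\}$, the module $L_\fp$ is Cohen--Macaulay and $\dim L_\fp+\dim R/\fp=\dim L$. Throughout I take $M$ and $N$ to be finitely generated, as is implicit in the statement.

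\emph{Step 1 (supports).} Since $R$ is a domain and $\dim M=\dim R$, the prime $(0)$ lies in $\operatorname{Supp}(M)$. Supports are closed under specialization, so $\operatorname{Supp}(M)=\Spec R$, and likewise for $N$. Hence
\[
\operatorname{Supp}(M\otimes_R N)=\operatorname{Supp}(M)\cap\operatorname{Supp}(N)=\Spec R,
\]
so $\dim(M\otimes_RN)=d$. Because $R$ is a complete Cohen--Macaulay domain, it is catenary and equidimensional, so $\operatorname{ht}\fp+\dim R/\fp=d$ for every prime $\fp$. Applying the criterion to $M\otimes_RN$, for each $\fp\neq\fm$ the module $(M\otimes_RN)_\fp=M_\fp\otimes_{R_\fp}N_\fp$ is Cohen--Macaulay of dimension $\operatorname{ht}\fp$. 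It is therefore a maximal Cohen--Macaulay $R_\fp$-module. Since $R$ has an isolated singularity, $R_\fp$ is regular, so this tensor product is a nonzero free $R_\fp$-module.

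\emph{Step 2 (the key lemma).} Let $(A,\mathfrak n)$ be a local domain and $X,Y$ finitely generated $A$-modules with $X\otimes_AY\cong A^n\neq0$. I claim $X$ and $Y$ are free; this step carries the real content. Tensoring with the residue field gives $n=\mu(X)\mu(Y)$. Let $r$ and $s$ be the ranks of $X$ and $Y$, computed at the fraction field. Then $rs=n$, while $r\le\mu(X)$ and $s\le\mu(Y)$, which forces $r=\mu(X)$ and $s=\mu(Y)$. Now take a minimal presentation $0\to K\to A^{\mu(X)}\to X\to0$. Localizing at $(0)$ shows $K$ has rank $\mu(X)-r=0$. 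But $K\subseteq A^{\mu(X)}$ is torsion-free, so $K=0$ and $X$ is free; symmetrically $Y$ is free. Applying the lemma with $A=R_\fp$, $X=M_\fp$ and $Y=N_\fp$ shows that $M_\fp$ and $N_\fp$ are free for all $\fp\neq\fm$.

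\emph{Step 3 (conclusion).} By Step 2, for $\fp\neq\fm$ the module $M_\fp$ is free over the regular ring $R_\fp$. It is therefore Cohen--Macaulay of dimension $\operatorname{ht}\fp=d-\dim R/\fp=\dim M-\dim R/\fp$. So $M$ meets the criterion of \cite[Example 9.5.7]{BH} and is generalized Cohen--Macaulay; the same argument applies to $N$.

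The only delicate point is Step 2, since a vanishing-Tor or depth-formula approach is unavailable without Tor-independence. The rank-counting argument avoids this and uses only that $R_\fp$ is a domain, which is where the domain hypothesis on $R$ enters. Completeness, through the existence of a dualizing complex, is used only to justify the local criterion for the generalized Cohen--Macaulay property.
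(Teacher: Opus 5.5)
Your proof is correct and follows essentially the same route as the paper: pass to the punctured spectrum, note that $(M\otimes_R N)_\fp$ is maximal Cohen--Macaulay over the regular ring $R_\fp$ and hence free, deduce that $M_\fp$ and $N_\fp$ are free, and conclude with the local criterion for the generalized Cohen--Macaulay property. The differences are minor: you prove that the factors are free with a self-contained rank-counting lemma where the paper cites \cite[3.4.7]{BJ}, and you explicitly justify the equality $\dim (M\otimes_RN)_\fp=\operatorname{ht}\fp$ needed for maximality, which the paper leaves implicit.
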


\begin{proof}
	Since \(R\) is a domain and both modules have full dimension, we have \(\operatorname{Supp}(M) = \operatorname{Supp}(N) = \operatorname{Spec}(R)\). For any non-maximal prime ideal
	\[
	\mathfrak{p} \in \operatorname{Supp}(M \otimes_R N) \setminus \{\mathfrak{m}\} = \operatorname{Spec}(R) \setminus \{\mathfrak{m}\},
	\]
	the localization
	\(
	(M \otimes_R N)_{\mathfrak{p}} \cong M_{\mathfrak{p}} \otimes_{R_{\mathfrak{p}}} N_{\mathfrak{p}}
	\)
	is Cohen--Macaulay over \(R_{\mathfrak{p}}\) by \cite[Ex.~9.5(1)]{BH}.	
	Recall that \(R_{\mathfrak{p}}\) is regular, and hence any maximal Cohen--Macaulay \(R_{\mathfrak{p}}\)-module is free. Thus,
	\(
	(M \otimes_R N)_{\mathfrak{p}} \cong M_{\mathfrak{p}} \otimes_{R_{\mathfrak{p}}} N_{\mathfrak{p}}
	\)
	is free. It follows that both \(N_{\mathfrak{p}}\) and \(M_{\mathfrak{p}}\) are free (see \cite[3.4.7]{BJ}); hence they are Cohen--Macaulay.
	Now take any \(\mathfrak{p} \in \operatorname{Ass}(M) \setminus \{\mathfrak{m}\}\). We see that \(\mathfrak{p} = (0)\). Applying \cite[Ex.~9.5.7(b)]{BSh}, we obtain that
	\(
	\dim(R/\mathfrak{p}) = \dim M
	\)
	for every \(\mathfrak{p} \in \operatorname{Min}(\operatorname{Supp}(M)) \setminus \{\mathfrak{m}\}\), and that \(M_{\mathfrak{p}}\) is Cohen--Macaulay for every \(\mathfrak{p} \in \operatorname{Supp}(M) \setminus \{\mathfrak{m}\}\). These conditions precisely characterize \(M\) as generalized Cohen--Macaulay. By symmetry, the same conclusion holds for \(N\).
\end{proof}

	\begin{proposition}
		Suppose \(R\) is a complete hypersurface domain and \(M,N\) are \(R\)-modules with \(\dim M=\dim N=\dim R\). If \(M\otimes_R N\) is generalized Cohen--Macaulay, then both \(M\) and \(N\) are generalized Cohen--Macaulay.
	\end{proposition}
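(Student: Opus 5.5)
The plan is to reduce the statement to the second rigidity theorem of Huneke--Wiegand over hypersurfaces. I apply it prime by prime on the punctured spectrum, and then use the local characterization of generalized Cohen--Macaulay modules (\cite[Ex.~9.5.7]{BH}) already used in the preceding proposition. Throughout I take \(M,N\) finitely generated, as is implicit in the notion of generalized Cohen--Macaulayness.

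\emph{Step 1: supports and dimensions.} Since \(R\) is a domain and \(\dim M=\dim N=\dim R=d\), the zero ideal lies in \(\operatorname{Supp}M\cap\operatorname{Supp}N\). Hence \(\operatorname{Supp}M=\operatorname{Supp}N=\operatorname{Spec}R\), and \(\operatorname{Supp}(M\otimes_R N)=\operatorname{Spec}R\). In particular \(\dim(M\otimes_R N)=d\). The ring \(R\) is a complete hypersurface, so it is Cohen--Macaulay, catenary and equidimensional. Therefore \(\operatorname{ht}\mathfrak p+\dim R/\mathfrak p=d\) for every prime \(\mathfrak p\).

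\emph{Step 2: localize the hypothesis.} Since \(M\otimes_R N\) is generalized Cohen--Macaulay, \cite[Ex.~9.5.7]{BH} gives the following for every \(\mathfrak p\neq\mathfrak m\): the module \((M\otimes_RN)_{\mathfrak p}\cong M_{\mathfrak p}\otimes_{R_{\mathfrak p}}N_{\mathfrak p}\) is Cohen--Macaulay of dimension \(d-\dim R/\mathfrak p=\operatorname{ht}\mathfrak p\). So it is a maximal Cohen--Macaulay module over \(R_{\mathfrak p}\). Moreover \(R_{\mathfrak p}\) is again a hypersurface and a domain. Thus \(M_{\mathfrak p}\) has a rank, because \(M_{(0)}\) is a vector space over \(\operatorname{Frac}R\); the same holds for \(N_{\mathfrak p}\).

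\emph{Step 3: rigidity.} Now I apply the Huneke--Wiegand second rigidity theorem. It says: over a hypersurface, if \(M\otimes N\) is maximal Cohen--Macaulay and one of the factors has a rank, then both factors are maximal Cohen--Macaulay. Applied over \(R_{\mathfrak p}\), this shows that \(M_{\mathfrak p}\) and \(N_{\mathfrak p}\) are maximal Cohen--Macaulay. For \(\mathfrak p\) of height \(0\) this is trivial. This step carries the whole argument, and I expect it to be the main obstacle. One must verify that the hypotheses of the rigidity theorem hold after localization: the hypersurface property, finite generation, and the rank condition. The rank condition is exactly where the domain hypothesis enters. In contrast, the previous proposition used regularity of \(R_{\mathfrak p}\) and the freeness argument, which is not available here.

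\emph{Step 4: conclude.} For every \(\mathfrak p\in\operatorname{Supp}M\setminus\{\mathfrak m\}\), Step 3 shows that \(M_{\mathfrak p}\) is Cohen--Macaulay of dimension \(\operatorname{ht}\mathfrak p\). By Step 1, \(\dim M_{\mathfrak p}+\dim R/\mathfrak p=d=\dim M\). Moreover \(\operatorname{Min}\operatorname{Supp}M=\{(0)\}\) with \(\dim R/(0)=\dim M\). These are precisely the conditions in \cite[Ex.~9.5.7]{BH} characterizing generalized Cohen--Macaulay modules over a complete, hence homomorphic image of a regular, local ring. Hence \(M\) is generalized Cohen--Macaulay, and by symmetry so is \(N\).
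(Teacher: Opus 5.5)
Your proof is correct and follows the paper's route: you localize at each non-maximal prime, apply Huneke--Wiegand over the hypersurface \(R_{\mathfrak p}\) to get \(M_{\mathfrak p}\) and \(N_{\mathfrak p}\) Cohen--Macaulay, and then conclude with the local characterization of generalized Cohen--Macaulay modules. You are more careful than the paper, which cites Huneke--Wiegand without its rank hypothesis and without the maximality condition; your observations that \(R_{\mathfrak p}\) is a domain (so ranks exist) and that \(\operatorname{Supp}(M\otimes_R N)=\operatorname{Spec}R\) (so \((M\otimes_R N)_{\mathfrak p}\) is maximal Cohen--Macaulay) supply exactly what that theorem requires.
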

	
	\begin{proof}
		The argument closely parallels the preceding observation, with the isolated singularity hypothesis replaced by a theorem of Huneke--Wiegand \cite[3.1]{tensor}: over a hypersurface ring, if \(M\otimes_R N\) is Cohen--Macaulay, then both \(M\) and \(N\) are Cohen--Macaulay. Since \(M\otimes_R N\) is generalized Cohen--Macaulay, for every non-maximal prime \(\mathfrak{p}\in\Spec(R)\setminus\{\mathfrak{m}\}\), the localization \((M\otimes_R N)_{\mathfrak{p}}\) is Cohen--Macaulay over the local hypersurface ring \(R_{\mathfrak{p}}\). Applying the Huneke--Wiegand result locally yields that both \(M_{\mathfrak{p}}\) and \(N_{\mathfrak{p}}\) are  Cohen--Macaulay. The remainder of the proof follows identically to the previous case: one shows that \(\Ass(M)\setminus\{\mathfrak{m}\}=\{(0)\}\) and similarly for \(N\), after which the criterion from \cite[Ex.~9.5.7(b)]{BSh} ensures that \(M\) and \(N\) are generalized Cohen--Macaulay.
	\end{proof}
	
\begin{proposition}
	Let $(R,\mathfrak{m})$ be a regular local ring and let $M\otimes_R N$ be a balanced big Cohen--Macaulay $R$-module.
	Then $M$ is balanced big Cohen--Macaulay if and only if $N$ is.
\end{proposition}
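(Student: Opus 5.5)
The plan is to reduce everything to flatness, using the fact that over a regular local ring the balanced big Cohen--Macaulay modules are exactly the faithfully flat modules. So the first step is to record the equivalence: for a regular local ring \((R,\mathfrak m,k)\) and an arbitrary \(R\)-module \(L\), \(L\) is balanced big Cohen--Macaulay if and only if \(L\) is faithfully flat.

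The direction ``faithfully flat \(\Rightarrow\) balanced big Cohen--Macaulay'' is elementary. Every system of parameters of \(R\) is an \(R\)-regular sequence because \(R\) is Cohen--Macaulay. Flatness then preserves injectivity of multiplication maps on the successive quotients, so the sequence stays regular on \(L\). Faithful flatness gives \(\mathfrak mL\neq L\), and more generally \((x_1,\dots,x_d)L\neq L\).

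The converse is the step I expect to be the main obstacle, since \(L\) need not be finitely generated and the local criterion of flatness is not directly available. My plan there is as follows.
\begin{enumerate}
\item Choose a regular system of parameters \(x_1,\dots,x_d\) generating \(\mathfrak m\). It is \(L\)-regular, so the Koszul complex on it tensored with \(L\) is acyclic. Since that Koszul complex is a free resolution of \(k\), this gives \(\operatorname{Tor}_i^R(k,L)=0\) for \(i>0\).
\item To pass from this to \(\operatorname{Tor}_1^R(R/\mathfrak p,L)=0\) for all primes \(\mathfrak p\), argue by induction on \(\dim R/\mathfrak p\). Use that every finitely generated module has finite projective dimension over \(R\), and that every part of a system of parameters is \(L\)-regular, as in the standard Hochster/Griffith argument.
\item If this becomes too long, I would instead simply cite the known theorem that balanced big Cohen--Macaulay modules over regular local rings are flat.
\end{enumerate}

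With the equivalence in hand, assume \(M\) is balanced big Cohen--Macaulay; by symmetry this suffices. Then \(M\) and \(M\otimes_R N\) are both faithfully flat. For any ideal \(I\), flatness of \(M\) gives
\[
\operatorname{Tor}_1^R(R/I,M\otimes_R N)\cong M\otimes_R\operatorname{Tor}_1^R(R/I,N).
\]
The left side vanishes by flatness of \(M\otimes_R N\). Faithful flatness of \(M\) then forces \(\operatorname{Tor}_1^R(R/I,N)=0\), so \(N\) is flat.

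Moreover,
\[
(M\otimes_R N)/\mathfrak m(M\otimes_R N)\cong M/\mathfrak mM\otimes_k N/\mathfrak mN.
\]
The left side is non-zero, so \(N/\mathfrak mN\neq 0\). A flat module \(N\) with \(\mathfrak mN\neq N\) over a local ring is faithfully flat, and hence balanced big Cohen--Macaulay by the easy direction of the equivalence.

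Interchanging \(M\) and \(N\) gives the other implication.
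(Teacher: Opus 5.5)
Your proof is correct and follows essentially the same route as the paper. Both arguments use the cited fact that balanced big Cohen--Macaulay modules over a regular local ring are faithfully flat. Both then cancel the faithfully flat factor $M$ to get flatness of $N$, check that $N$ is faithful, and return to balanced big Cohen--Macaulay. The differences are cosmetic: you phrase flatness through $\operatorname{Tor}_1^R(R/I,-)$ and faithfulness through $\mathfrak m N\neq N$, whereas the paper uses injectivity of $A\otimes_R N\to B\otimes_R N$ and the implication $N\otimes_R L=0\Rightarrow L=0$. Like the paper, you ultimately cite the hard direction (balanced big Cohen--Macaulay implies flat) rather than relying on your sketched induction in step 2, which as written is not a complete argument.
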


\begin{proof}
	Since $R$ is regular, every balanced big Cohen--Macaulay module is flat (see \cite[Page 61]{h}). Thus $M\otimes_R N$ is faithfully flat. 
By symmetry, it suffices to assume $M$ is balanced big Cohen--Macaulay. In particular, $M$ is faithfully flat. 
	To show that $N$ is flat, let $0 \to A \to B$ be an injection of $R$-modules. We need to prove that
$
	0 \longrightarrow A\otimes_R N \longrightarrow B\otimes_R N
$
	is exact. Tensoring the injection $0 \to A \to B$ with $M\otimes_R N$ gives
	\[
	0 \longrightarrow A \otimes_R (M\otimes_R N) \longrightarrow B \otimes_R (M\otimes_R N),
	\]
	which, by associativity of the tensor product, is equivalently
	\[
	0 \longrightarrow (A\otimes_R N)\otimes_R M \longrightarrow (B\otimes_R N)\otimes_R M.
	\]
	Since $M$ is faithfully flat, it follows that
$
	0 \longrightarrow A\otimes_R N \longrightarrow B\otimes_R N
$
	is exact. Hence $N$ is flat.
It remains to show faithful flatness. Suppose $N\otimes_R L = 0$ for some $R$-module $L$. Then
\[
	(M\otimes_R N)\otimes_R L = M\otimes_R (N\otimes_R L) = M\otimes_R 0 = 0.
	\]
Since $M\otimes_R N$ is faithfully flat, we conclude $L = 0$. Thus $N$ is faithfully flat.
	Finally, because $R$ is regular, every faithfully flat module is balanced big Cohen--Macaulay (see \cite[Page 61]{h}). Therefore $N$ is balanced big Cohen--Macaulay. By symmetry, the converse holds as well.
\end{proof}

\begin{example}
	Flatness of \(M\otimes_R N\) need not imply flatness of either factor individually. Let \(R\) be a domain with fraction field \(Q\), and take \(0 \neq r \in R\). For \(M = Q \oplus R/rR\) and \(N = Q\), we have
	\(
	M \otimes_R N \cong Q,
	\)
	which is flat over \(R\). Yet \(M\) is not flat, since its torsion summand \(R/rR\) is not flat over the domain \(R\).
\end{example}
\begin{proposition} 	\begin{itemize}
		\item[(i)]
	Suppose \(A\) and \(B\) are Tor-independent \(R\)-modules such that \(\beta_1(A)=\beta_1(B)=0\). Then \(\beta_1(A\otimes_R B)=0\).
	
	\item[(ii)]	
	Suppose the pair \((B,C)\) is Ext-independent and that \(\mu_1(C)=\beta_1(B)=0\). Then \(\mu_1(\Hom( B,C))=0\).
\end{itemize}\end{proposition}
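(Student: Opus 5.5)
For (i) I would use the standard change-of-rings (Künneth-type) spectral sequence for Tor. Tor-independence of \(A\) and \(B\) means \(\Tor_i^R(A,B)=0\) for all \(i>0\). Take free resolutions \(F_\bullet\to A\) and \(G_\bullet\to B\). Then \(F_\bullet\otimes_R G_\bullet\) is a complex of free modules whose homology is \(\Tor_*^R(A,B)\), and by hypothesis this is \(A\otimes_R B\) concentrated in degree \(0\). So \(F\otimes G\) is a free resolution of \(A\otimes_R B\), and
\[
\Tor_n^R(k,A\otimes_R B)\cong H_n\big(k\otimes_R F_\bullet\otimes_R G_\bullet\big)=H_n\big((k\otimes F_\bullet)\otimes_k (k\otimes G_\bullet)\big).
\]
Since \(k\otimes F_\bullet\) and \(k\otimes G_\bullet\) are complexes of \(k\)-vector spaces, the Künneth formula over the field \(k\) gives
\[
\Tor_n^R(k,A\otimes B)\cong\bigoplus_{p+q=n}\Tor_p^R(k,A)\otimes_k\Tor_q^R(k,B).
\]
For \(n=1\) the right side is \(\Tor_1(k,A)\otimes_k(B/\fm B)\oplus (A/\fm A)\otimes_k\Tor_1(k,B)\), and this vanishes because \(\beta_1(A)=\beta_1(B)=0\). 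Hence \(\beta_1(A\otimes_RB)=0\). I would note that no finiteness is needed: resolutions of arbitrary modules by free modules suffice, and Künneth over a field holds without restriction.

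For (ii) I would argue dually. Ext-independence of \((B,C)\) means \(\Ext^i_R(B,C)=0\) for \(i>0\). Take a free resolution \(G_\bullet\to B\) and an injective resolution \(C\to I^\bullet\). Then \(\Hom(G_\bullet,I^\bullet)\) is a complex of injectives. Its cohomology, computed by the spectral sequence in which \(I\) is applied first, is \(\Ext^*(B,C)=\Hom(B,C)\) in degree \(0\). So it is an injective resolution of \(\Hom(B,C)\). Using adjunction,
\[
\Hom(k,\Hom(G,I))\cong\Hom(k\otimes G,I)\cong\Hom_k\big(k\otimes G,\Hom(k,I)\big).
\]
This is a Hom complex between two complexes of \(k\)-vector spaces, so Künneth over \(k\) gives
\[
\Ext^n(k,\Hom(B,C))\cong\prod_{q-p=n}\Hom_k\big(\Tor_p(k,B),\Ext^q(k,C)\big).
\]
For \(n=1\) the contributing pairs are \((p,q)=(0,1)\) and \((p,q)=(p,p+1)\) with \(p\ge1\). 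The first factor vanishes since \(\mu_1(C)=0\), and the \(p=1\) term vanishes since \(\beta_1(B)=0\).

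The main obstacle is the terms with \(p\ge 2\), namely \(\Hom_k(\Tor_p(k,B),\Ext^{p+1}(k,C))\). The hypotheses do not obviously kill these, so as stated (ii) seems to need more. I would first try the observation that \(\beta_1(B)=0\) over a local ring should force strong vanishing. For finitely generated \(B\) it gives freeness, hence \(\beta_p(B)=0\) for all \(p\ge1\), and the formula collapses to \(\mu_1(\Hom(B,C))=\mu_1(C)^{\beta_0(B)}=0\). If \(B\) is not finitely generated, I would instead read the intended hypothesis as \(\mathrm{fd}\,B\le 0\), i.e.\ \(\Tor_{\ge1}(k,B)=0\), and use this to conclude. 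I would flag this point explicitly in the proof.
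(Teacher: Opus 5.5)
\textbf{Part (i).} Your argument is correct but takes a different route from the paper. The paper runs the spectral sequence $\Tor_p^R(A,\Tor_q^R(B,k))\Rightarrow\Tor_{p+q}^R(A\otimes_R B,k)$ and reads off degree one from its five-term exact sequence. You instead observe that $F_\bullet\otimes_R G_\bullet$ is a free resolution of $A\otimes_R B$ and apply the K\"unneth formula over $k$. What you gain is the full decomposition $\Tor_n^R(k,A\otimes_R B)\cong\bigoplus_{p+q=n}\Tor_p^R(k,A)\otimes_k\Tor_q^R(k,B)$. Its summands are exactly the paper's $E^2$-terms, because $\Tor_p^R(A,\Tor_q^R(B,k))\cong\Tor_p^R(A,k)\otimes_k\Tor_q^R(B,k)$. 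The paper only needs the low-degree part.

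\textbf{Part (ii).} This part has a genuine error, and it is exactly what creates your ``obstacle''. In $\Hom_R(G_\bullet,I^\bullet)$ the piece $\Hom_R(G_p,I^q)$ sits in total degree $p+q$, not $q-p$. Indeed, $G_\bullet$ is a chain complex, so $G_p$ lives in cohomological degree $-p$, and a map from degree $-p$ to degree $q$ has degree $p+q$. This is also why $\Hom_R(G_\bullet,C)$, with $\Hom_R(G_n,C)$ in degree $n$, computes $\Ext^n_R(B,C)$, and why your complex is a bounded-below complex of injectives. The K\"unneth formula therefore reads
\[
\Ext^n_R(k,\Hom_R(B,C))\cong\prod_{p+q=n}\Hom_k\bigl(\Tor_p^R(k,B),\Ext^q_R(k,C)\bigr).
\]
Your indexing already fails for $R=k[[x]]$, $B=k$, $C=E_R(k)$. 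This tests only the formula, not the proposition, since $\beta_1(k)\neq 0$. The pair is Ext-independent, $\Hom_R(k,E_R(k))\cong k$, and $\Ext^1_R(k,k)\neq 0$. Yet your formula gives $\prod_{p}\Hom_k(\Tor_p^R(k,k),\Ext^{p+1}_R(k,E_R(k)))=0$ in degree one.

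With the correct indexing, degree one receives only $(p,q)=(0,1)$ and $(1,0)$. These are $\Hom_k(B/\fm B,\Ext^1_R(k,C))$ and $\Hom_k(\Tor_1^R(k,B),\Hom_R(k,C))$, which vanish by $\mu_1(C)=0$ and $\beta_1(B)=0$ respectively. The terms $\Hom_k(\Tor_p^R(k,B),\Ext^{p+1}_R(k,C))$ with $p\ge 2$ live in degree $2p+1$ and are irrelevant. So neither finite generation of $B$ nor $\mathrm{fd}\,B\le 0$ is needed. By retreating to those extra hypotheses, your proposal does not prove (ii) as stated.

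Once corrected, your computation is the paper's argument made explicit. The paper uses $E_2^{p,q}=\Ext^p_R(\Tor_q^R(B,k),C)\Rightarrow\Ext^{p+q}_R(k,\Hom_R(B,C))$, where $E_2^{p,q}\cong\Hom_k(\Tor_q^R(B,k),\Ext^p_R(k,C))$; note the roles of $p$ and $q$ are swapped relative to yours. It then kills $E_2^{1,0}$ and $E_2^{0,1}$ in the five-term exact sequence.
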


\begin{proof}
(i)	Recall that Tor-interdependency means that $\Tor^R_i(A,B)=0$ for all $i>0$.
Since \(A\) and \(B\) are Tor-independent, there is a spectral sequence
	\[
	\operatorname{Tor}_p^R(A,\operatorname{Tor}_q^R(B,k)) \Longrightarrow \operatorname{Tor}_{p+q}^R(A\otimes_R B,k),
	\]
	(see \cite[Theorem 10.59]{Rot}). Again by \cite[Theorem 10.31]{Rot}, there is an exact sequence
	\[
	\operatorname{Tor}_2^R(A\otimes_R B,k) \to \operatorname{Tor}_2^R(A,B\otimes_R k) \to A\otimes_R \operatorname{Tor}_1^R(B,k) \to \operatorname{Tor}_1^R(A\otimes_R B,k) \to \operatorname{Tor}_1^R(A,B\otimes_R k) \to 0.
	\]
	Now \(B\otimes_R k \cong k^{\,t}\) for some \(t\in[0,\infty]\). Consequently \(\operatorname{Tor}_1^R(A,B\otimes_R k)=0\). From the displayed exact sequence, \(\beta_1(A\otimes_R B)=0\).

(ii)
	Since \((B,C)\) is Ext-independent, 
$\operatorname{Ext}_R^i(B \otimes_R P, C) = \{0\}
$
	for all $i \ge 1$ whenever ${}_R P$ is projective. 
	Then there is a third quadrant spectral sequence (see \cite[Theorem 10.62]{Rot}):
	\[
	\operatorname{Ext}_R^p\bigl(\operatorname{Tor}_R^q(B, k), C\bigr) 
	\Rightarrow_p \operatorname{Ext}_R^n\bigl(k, \operatorname{Hom}_R(B, C)\bigr).
	\]
	  By \cite[Theorem 10.33]{Rot}, there is an exact sequence
		\[
	0 \longrightarrow E_2^{1,0} \longrightarrow H^1(\operatorname{Tot}(M)) 
	\longrightarrow E_2^{0,1} \xrightarrow{d_2} E_2^{2,0} 
	\longrightarrow H^2(\operatorname{Tot}(M)).
	\]
	Now \(B\otimes_R k \cong k^{\,t}\) for some  \(t\in[0,\infty]\). Consequently 
	$E_2^{1,0}=\operatorname{Ext}_R^1\bigl(\operatorname{Tor}_R^0(B, k), C\bigr)=\operatorname{Ext}_R^1\bigl(k^{\,t}, C\bigr)=0$. Similarly,  
	$E_2^{0,1}=\operatorname{Ext}_R^0\bigl(\operatorname{Tor}_R^1(B, k), C\bigr)=\operatorname{Hom}_R \bigl(0, C\bigr)=0$.
 From the displayed exact sequence, \(\mu_1(\operatorname{Hom}_R(B, C))=0\).
\end{proof}

	\section{Weakly tor-rigid}
This section is divided into two subsections:
	\subsection{Weak tor-rigidity and big Cohen--Macaulay algebras}
	
	This subsection focuses on tor-rigidity and big Cohen--Macaulay algebras, proving that Schoutens' question has a positive answer for tor-rigid algebras.
	
\begin{question}(Schoutens, \cite[End of \S 2]{sc1}).
	Suppose \(S\) is a big Cohen--Macaulay algebra over a local ring \(R\). If \(\operatorname{Tor}_n^R(k,S)=0\) for some \(n\ge 1\), does it follow that \(R\) is Cohen--Macaulay?
\end{question}First, recall that vanishing of all Betti numbers does not imply flatness:
\begin{example}[Bartijn--Strooker \cite{BS}]
	Let \(R = k[[x,y,z]]\) and let \(\varphi\) be a free module of infinite countable rank. Define \(\Omega := \varphi + (x,y)\widehat{\varphi}\). Then \(\{x,y,z\}\) is an \(\Omega\)-sequence, \(\{z,x,y\}\) is not an \(\Omega\)-sequence, and \(\operatorname{Tor}_i^R(k,\Omega)=0\).  
\end{example}
The following extends Example 3.2, and simplifies a result of Huneke (see \cite[9.1]{Hun}).

\begin{observation}\label{3.3}
	If \(R\) is a regular local ring and \(M\) is a big Cohen--Macaulay module, then \(\beta_i(M)=0\) for all \(i>0\).
\end{observation}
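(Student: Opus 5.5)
Over a regular local ring $(R,\mathfrak{m},k)$ of dimension $d$, I would compute $\operatorname{Tor}_i^R(k,M)$ by resolving $k$ rather than $M$. Since $R$ is regular, $\mathfrak{m}$ is generated by a regular system of parameters $\underline{x}=x_1,\ldots,x_d$. The Koszul complex $K_\bullet(\underline{x};R)$ is then a finite free resolution of $k$. Hence
\[
\operatorname{Tor}_i^R(k,M)\cong H_i\bigl(K_\bullet(\underline{x};R)\otimes_R M\bigr)=H_i(\underline{x};M)
\]
for all $i\ge 0$. The problem therefore reduces to showing that the Koszul homology $H_i(\underline{x};M)$ vanishes for $i>0$.

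The key input is the definition of a big Cohen--Macaulay module, which I take in the sense that some system of parameters is $M$-regular. Two ingredients are needed.
\begin{enumerate}[(1)]
\item A regular sequence on $M$ has vanishing higher Koszul homology. This follows by the standard induction on the length of the sequence, using the long exact sequence relating $H_\bullet(x_1,\ldots,x_{j};M)$ and $H_\bullet(x_1,\ldots,x_{j+1};M)$ together with the injectivity of multiplication by $x_{j+1}$ on $M/(x_1,\ldots,x_j)M$. This step does not use finite generation of $M$.
\item We must pass from the given $M$-regular system of parameters to the specific sequence $\underline{x}$ generating $\mathfrak{m}$.
\end{enumerate}

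I expect step (2) to be the main obstacle. For non-finitely generated modules, being a regular sequence is not independent of the choice of system of parameters. The Bartijn--Strooker example just quoted shows that even permutation can fail. So I cannot simply assert that $\underline{x}$ is $M$-regular.

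I would handle this in one of two ways.
\begin{itemize}
\item If $M$ is balanced big Cohen--Macaulay, every system of parameters, in particular $\underline{x}$, is $M$-regular. Then (1) finishes the proof immediately. This is consistent with the fact quoted earlier that balanced big Cohen--Macaulay modules over regular rings are flat.
\item In the general case, take an $M$-regular system of parameters $\underline{y}=y_1,\ldots,y_d$. By (1), $H_i(\underline{y};M)=0$ for $i>0$. Since $R$ is regular, $\underline{y}$ is also $R$-regular, so $K_\bullet(\underline{y};R)$ resolves $R/(\underline{y})$. This gives $\operatorname{Tor}_i^R(R/(\underline{y}),M)=0$ for $i>0$. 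Now $R/(\underline{y})$ is a module of finite length, so it has a finite filtration with factors isomorphic to $k$. I would then try to transfer vanishing from $R/(\underline{y})$ to $k$. This is done by descending induction on $i$ starting from $i>d$, where $\operatorname{Tor}_i^R(k,M)=0$ because $\operatorname{pd}_R k=d$. At each stage use the long exact Tor sequences along the filtration, together with a rigidity/length argument that forces the top nonvanishing $\operatorname{Tor}_j^R(k,M)$ to inject into $\operatorname{Tor}_j^R(R/(\underline{y}),M)$. The point is that the top Tor functor $\operatorname{Tor}_d^R(-,M)$ is left exact on finite-length modules, since $\operatorname{Tor}_{d+1}$ vanishes.
\end{itemize}
The delicate point is exactly the injectivity at the top nonvanishing degree. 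If it fails, I would fall back on interpreting "big Cohen--Macaulay" in the balanced sense, as in the paper's reference to Huneke.
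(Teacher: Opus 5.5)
Your proof is correct. It has the same skeleton as the paper's: resolve $k$ by the Koszul complex on a regular system of parameters and reduce to the vanishing of $H_i(\underline{x};M)$. The difference is in step (2). The paper handles it in one sentence, asserting that for a big Cohen--Macaulay module the Koszul complex on \emph{any} system of parameters is exact on $M$. Over a regular ring that assertion is true, but it follows from the definition only in the balanced case, since regularity of one s.o.p.\ does not make the others regular (as the Bartijn--Strooker example shows), and the paper gives no argument for it.

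Your descending induction supplies that argument, and no fallback to the balanced interpretation is needed. The base case comes from $\operatorname{pd}_R k=d$. Suppose $\operatorname{Tor}_i^R(k,M)=0$ for all $i>j$, where $j\ge 1$. Filtering by copies of $k$ gives $\operatorname{Tor}_{j+1}^R(L,M)=0$ for every finite-length $L$. Hence $\operatorname{Tor}_j^R(-,M)$ is left exact on finite-length modules. Since $R/(\underline{y})\neq 0$ has finite length, it has nonzero socle, so $k\hookrightarrow R/(\underline{y})$. This yields $\operatorname{Tor}_j^R(k,M)\hookrightarrow \operatorname{Tor}_j^R(R/(\underline{y}),M)=H_j(\underline{y};M)=0$.

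In a write-up, make two points explicit. First, $k$ must enter as a \emph{submodule} of $R/(\underline{y})$, not merely as a filtration factor. Second, at stage $j$ the vanishing you use is that of $\operatorname{Tor}_{j+1}$ on finite-length modules, supplied by the inductive hypothesis, not $\operatorname{Tor}_{d+1}$.

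Your route proves the statement for arbitrary, possibly non-balanced, big Cohen--Macaulay modules. As a by-product you get $\operatorname{Tor}_{>0}^R(L,M)=0$ for all finite-length $L$, so the Koszul complex on every s.o.p.\ is acyclic on $M$. That is exactly the claim the paper's proof takes for granted. It is also consistent with the Bartijn--Strooker module having $\operatorname{Tor}_{>0}^R(k,\Omega)=0$ even though $z,x,y$ is not $\Omega$-regular.
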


\begin{proof}
	Since \(R\) is regular, the residue field \(k\) admits a finite free resolution
	\[
	0 \longrightarrow F_d \longrightarrow F_{d-1} \longrightarrow \cdots \longrightarrow F_0 \longrightarrow k \longrightarrow 0.
	\]
	Tensoring this resolution with \(M\) yields a complex whose homology is precisely \(\operatorname{Tor}_i^R(k,M)\). Because \(M\) is big Cohen--Macaulay, the Koszul complex on any system of parameters is exact on \(M\), which is equivalent to the vanishing of all positive \(\operatorname{Tor}\)'s with the residue field; hence \(\operatorname{Tor}_i^R(k,M)=0\) for every \(i>0\). Consequently, the Betti numbers \(\beta_i(M)=\dim_k \operatorname{Tor}_i^R(k,M)\) vanish for all \(i>0\).
\end{proof}

\begin{definition}
	An \(R\)-module \(S\) is called \emph{weakly tor-rigid} if \(\operatorname{Tor}_j^R(R/I,S)=0\) for some parameter ideal \(I\) and some integer \(j\) implies \(\operatorname{Tor}_{j+1}^R(R/I,S)=0\).
\end{definition}
Concerning Question 3.1, he proved the desired claim when $n\leq 2$. Here, we deal with the case \(n=3\), and the general case seems being analogous.
\begin{theorem}
Suppose \(S\) is a big Cohen--Macaulay, weakly tor-rigid and  \(\operatorname{Tor}_3^R(S,k)=0\).   Then \(R\) is Cohen--Macaulay.
\end{theorem}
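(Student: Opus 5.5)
The plan is to convert the single vanishing $\operatorname{Tor}_3^R(S,k)=0$ into vanishing of $\operatorname{Tor}_j^R(R/I,S)$ for all $j\ge 3$ and all parameter ideals $I$. From there I would try to reduce to Schoutens' cases $n\le 2$, or else to argue directly that some system of parameters is $R$-regular. The step I am least sure of is the last one; see the third paragraph.

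\emph{Step 1 (finite length modules).} Let $N$ be any $R$-module of finite length. Then $N$ has a composition series whose factors are all isomorphic to $k$. For a short exact sequence $0\to k\to N\to N'\to 0$ the long exact Tor sequence gives
\[
\operatorname{Tor}_3^R(k,S)\to\operatorname{Tor}_3^R(N,S)\to\operatorname{Tor}_3^R(N',S).
\]
Induction on $\ell(N)$ therefore yields $\operatorname{Tor}_3^R(N,S)=0$. In particular $\operatorname{Tor}_3^R(R/I,S)=0$ for every parameter ideal $I=(x_1,\dots,x_d)$, including all Frobenius-type powers $I_t=(x_1^t,\dots,x_d^t)$.

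\emph{Step 2 (rigidity).} Apply weak tor-rigidity to $R/I$ with $j=3$ to obtain $\operatorname{Tor}_4^R(R/I,S)=0$, and iterate. This gives $\operatorname{Tor}_j^R(R/I,S)=0$ for all $j\ge 3$ and every parameter ideal $I$. Thus $R/I$ has, relative to $S$, a ``finite Tor-dimension'' bounded by $2$.

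\emph{Step 3 (toward Cohen--Macaulayness).} Choose a system of parameters $\underline{x}$ that is $S$-regular, so the Koszul homology $H_i(\underline{x};S)$ vanishes for $i>0$ and $S/\underline{x}S\neq 0$. Consider the comparison map from the Koszul complex $K(\underline{x})$ to a free resolution $F_\bullet$ of $R/(\underline{x})$. I would then try one of two routes.
\begin{itemize}
\item[(a)] Pass to the direct limit over $t$, which computes $H^d_{\mathfrak m}$-type objects. Since Tor commutes with direct limits, this gives vanishing of $\operatorname{Tor}_j^R(H^d_{\mathfrak m}(R),S)$ for $j\ge 3$.
\item[(b)] Dimension-shift, writing $\operatorname{Tor}_3^R(k,S)\cong\operatorname{Tor}_1^R(\Syz_2(k),S)$, and use the finite-length information from Steps 1--2 to reach the hypotheses of Schoutens' $n\le2$ result.
\end{itemize}
In either route the target is to show that $H_i(\underline{x};R)=0$ for $i>0$, that is, that $\underline{x}$ is $R$-regular. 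The idea is to exploit that $S\otimes_R-$ kills the relevant higher homology while $S/\underline{x}S\neq 0$ prevents degeneration.

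\emph{Main obstacle.} Steps 1 and 2 are formal. The hard part is Step 3: turning vanishing of high Tor against $S$ into Cohen--Macaulayness of $R$, since $S$ is not flat and need not detect vanishing of $R$-modules. I would first try route (b). This is where the restriction to $n=3$ enters: the two-step shift lands exactly at Schoutens' $\operatorname{Tor}_1$ and $\operatorname{Tor}_2$ arguments. The rigidity from Step 2 is what lets the vanishing survive the shift.
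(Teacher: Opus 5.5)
Steps 1 and 2 are correct, but they do not touch the real difficulty. Step 3 contains no argument, so there is a genuine gap. The missing idea is vanishing in \emph{low} degrees.

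Let $\underline{x}$ be an $S$-regular system of parameters and $I=(\underline{x})$. Then $\Tor_1^R(R/I,S)=\ker(I\otimes_R S\to S)=0$. To see this, lift an element of the kernel to $S^d$. The lift is a Koszul $1$-cycle, hence (as $H_1(\underline{x};S)=0$) an $S$-combination of the relations $x_je_i-x_ie_j$. These map to $x_i\otimes x_js-x_j\otimes x_is=x_ix_j\otimes s-x_jx_i\otimes s=0$ in $I\otimes_R S$.

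Weak tor-rigidity should therefore be applied at $j=1$, giving $\Tor_2^R(R/I,S)=0$. Applying it only from $j=3$ on, as you do, gives vanishing in a range that plays no role. What lets one pass from $R$ to the artinian ring $\overline{R}=R/I$ is exactly $\Tor_1^R(R/I,S)=\Tor_2^R(R/I,S)=0$.

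Route (b) shifts in the wrong variable. $\Syz_2(k)$ is not of finite length, so Steps 1--2 say nothing about $\Tor_1^R(\Syz_2(k),S)$. Moreover, Schoutens' $n\le 2$ results concern $\Tor_1$ or $\Tor_2$ of $S$ itself against $k$, which no such shift produces. Route (a) yields $\Tor_{\ge 3}^R(H^d_{\fm}(R),S)=0$, which has no visible bearing on $H_i(\underline{x};R)$.

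With the low-degree vanishing in hand, the paper argues as follows. Take $0\to M\to F_1\to F_0\to S\to 0$ with $M=\Syz_2^R(S)$ and $E=\im(F_1\to F_0)$. The vanishing of $\Tor_1^R(R/I,S)$ and of $\Tor_1^R(R/I,E)=\Tor_2^R(R/I,S)$ keeps $0\to M/IM\to F_1/IF_1\to E/IE\to 0$ and $0\to E/IE\to F_0/IF_0\to S/IS\to 0$ exact. Since $\Tor_1^R(M,k)=\Tor_3^R(S,k)=0$ and $\Tor_1^R(M,k)\to\Tor_1^{\overline{R}}(M/IM,k)$ is onto, $M/IM$ is flat over $\overline{R}$. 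Hence $S/IS$ has finite flat dimension over the artinian ring $\overline{R}$, so it is flat. Being nonzero, it is faithfully flat, which gives $IS\cap R=I$ and then Cohen--Macaulayness of $R$.

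You could also close your own Step 3 directly. Let $Z_1$ be the module of Koszul $1$-cycles of $\underline{x}$ on $R$, and tensor $0\to Z_1\to R^d\to I\to 0$ with $S$. Using $\Tor_1^R(I,S)=\Tor_2^R(R/I,S)=0$ and the injectivity of $I\otimes_R S\to S$, right exactness gives $H_1(\underline{x};R)\otimes_R S\cong H_1(\underline{x};S)=0$. Since $S\neq\fm S$, Nakayama forces $H_1(\underline{x};R)=0$, i.e.\ $\underline{x}$ is $R$-regular. This shortcut uses only the big Cohen--Macaulay property and rigidity at $j=1$, not the hypothesis $\Tor_3^R(S,k)=0$.
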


\begin{proof}
 There exists an exact sequence
$
	0 \to M \to F_1 \xrightarrow{\varphi} F_0 \to  S \to 0,
$
	where \(M=\Syz_2(S)\) and \(F_0,F_1\) are free modules (not necessarily of finite rank). As observed in \cite[2.5]{sc1}, it suffices to prove that \(IS\cap R=I\) for some parameter ideal \(I\). 
Consider the two short exact sequences arising from the free resolution:
	\[
	0 \longrightarrow M \longrightarrow F_1 \longrightarrow E:=\operatorname{Im}(\varphi) \longrightarrow 0,
	\]
	\[
	0 \longrightarrow E \longrightarrow F_0 \longrightarrow S \longrightarrow 0.
	\]
	Tensoring these with \(\overline{R}:=R/I\) yields the exact sequences

\begin{itemize}
	\item[(i)]
 \(0 \longrightarrow \operatorname{Tor}_1^R(E,\overline{R}) \longrightarrow M/IM \longrightarrow F_1\otimes_R \overline{R} \longrightarrow E\otimes_R \overline{R} \longrightarrow 0,\) and
 \item[(ii)]	
	\(	0 \longrightarrow \operatorname{Tor}_1^R(S,\overline{R}) \longrightarrow E\otimes_R \overline{R} \longrightarrow F_0\otimes_R \overline{R} \longrightarrow S\otimes_R \overline{R} \longrightarrow 0.
	\) 
\end{itemize}

Now, tensoring the short exact sequence \(0\to I\to R\to R/I\to 0\) with \(S\) to obtain
$
	\operatorname{Tor}_1^R(R/I,S) \to S\otimes_R I \to S.
$
	Since \(S\) is big Cohen-Macaulay, $I$ is generated by an $S$-sequence. Hence, the map \(S\otimes_R I\to S\) is injective, so \(\operatorname{Tor}_1^R(R/I,S)=0\). Moreover, the natural map \(I\otimes_R S\to IS\) is an isomorphism. Applying tor-rigidity, we also get \(\operatorname{Tor}_2^R(R/I,S)=0\). Assuming \(\operatorname{Tor}_1^R(R/I,E)=\operatorname{Tor}_2^R(R/I,S)=0\), the first tensored sequence reduces to
	\[
\zeta:=	0\longrightarrow M/IM \longrightarrow F_1\otimes_R \overline{R} \longrightarrow E\otimes_R \overline{R} \longrightarrow 0.
	\]
	From this we deduce \(\operatorname{Tor}_1^R(M,k)=\operatorname{Tor}_3^R(S,k)=0\), and consequently \(\operatorname{Tor}_1^{\overline{R}}(M/IM,k)=0\) by \cite[2.1]{sc1}. Hence \(M/IM\) is flat over \(\overline{R}\), (see \cite[22.3]{mat}). Now, by $\zeta$	we deduce \(E\otimes_R \overline{R}\) has finite flat dimension over \(\overline{R}\).
	Since \(\overline{R}\) is zero-dimensional, the flat dimension of \(E\otimes_R \overline{R}\) over \(\overline{R}\) is at most dimension of the ring, then it is actually flat over the artinian ring \(\overline{R}\). 
By 		\[
0 = \operatorname{Tor}_1^R(S,\overline{R}) \longrightarrow E\otimes_R \overline{R} \longrightarrow F_0\otimes_R \overline{R} \longrightarrow S\otimes_R \overline{R} \longrightarrow 0.
\]
we see $S\otimes_R \overline{R}$
is actually flat over   \(\overline{R}\).  Recall that flat modules are torsion-free, thus, annihilator
of $S\otimes_R \overline{R}=S/IS$
as an $R$-module is $I$.
In other words, \(IS\cap R=I\). This ensures that \(I\) is generated by an \(R\)-sequence, as
\(I\) is $S$-regular sequence.
By definition, \(R\) is Cohen--Macaulay.
\end{proof}

	\begin{remark}
		If \(R\) is Cohen--Macaulay and \(M\) is a big Cohen--Macaulay module, then \(M\) is weakly tor-rigid. To see this, let \(\mathbf{x}=x_1,\ldots,x_n\) be a system of parameters for \(R/I\). Since \(M\) is big Cohen--Macaulay, \(\mathbf{x}\) is an \(M\)-sequence. The Koszul complex \(K_\bullet(\mathbf{x};M)\) is acyclic. Therefore
		\[
		\operatorname{Tor}_j^R(M,R/I) = H_j(M\otimes K_\bullet(\mathbf{x};R)) = H_j(\mathbf{x};M)=0
		\]
		for all \(j>0\). The Cohen--Macaulayness of \(R\) ensures that every system of parameters is regular on \(R\), which identifies \(R/I\) with the quotient by a regular sequence.
	\end{remark}

	\begin{fact}\label{shh1}(See \cite[Theorem VI.10]{sc3}).	Suppose there exists \(a \ge 1\) such that
$
		\beta_i^R(\mathfrak{p}, -)=0
	$
		for all \(\mathfrak{p} \in \mathrm{Sing}(R)\) and \(0 \le i \le \mathrm{ht}(\mathfrak{p})\).
		Then \(\mathrm{pd}_R(-)\) is finite.\end{fact}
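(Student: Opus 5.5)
The plan is to argue prime by prime, by induction on height, that $M_{\mathfrak p}$ has finite flat dimension over $R_{\mathfrak p}$. Then I will upgrade flat dimension to projective dimension. Here $M$ is the module in the hypothesis, and $\beta_i(\mathfrak p,M)=\dim_{k(\mathfrak p)}\operatorname{Tor}_i^{R_{\mathfrak p}}(k(\mathfrak p),M_{\mathfrak p})$. I read the vanishing range as a window of $\operatorname{ht}(\mathfrak p)+1$ consecutive indices starting at $a$, which is presumably what the parameter $a\ge 1$ in the statement is for. The final step uses the theorem of Gruson--Raynaud and Jensen: over a Noetherian ring of finite Krull dimension, a module of finite flat dimension has projective dimension at most $\dim R$. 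So it suffices to prove $\operatorname{fd}_{R_{\mathfrak p}}M_{\mathfrak p}<\infty$ for every $\mathfrak p$, with a uniform bound.

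For $\mathfrak p\notin \mathrm{Sing}(R)$ there is nothing to do. $R_{\mathfrak p}$ is regular, so $k(\mathfrak p)$ has a finite free resolution of length $\operatorname{ht}\mathfrak p$, and every $R_{\mathfrak p}$-module has flat dimension at most $\operatorname{ht}\mathfrak p\le \dim R$.

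For $\mathfrak p\in\mathrm{Sing}(R)$ I will induct on $h=\operatorname{ht}\mathfrak p$. Localizing, I may assume $(R,\mathfrak m,k)$ is local of dimension $h$ with $\mathfrak p=\mathfrak m$. By induction, $M$ has finite flat dimension on the punctured spectrum, bounded by $h-1$ thanks to the Auslander--Buchsbaum-type bound $\operatorname{fd}\le\operatorname{depth}$ of the local rings there. Let $N=\operatorname{Syz}_{a+h}(M)$ be a syzygy in a free resolution, not necessarily finite. Then:
\begin{itemize}
\item $N_{\mathfrak q}$ is flat for every $\mathfrak q\neq\mathfrak m$, since $a+h\ge h$;
\item by dimension shifting, the window hypothesis gives $\operatorname{Tor}_j^R(k,\operatorname{Syz}_a M)=0$ for $0\le j\le h$, where $\operatorname{Syz}_0 M=M$. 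The $j=0$ case is a Nakayama-type statement, valid since $a\ge 1$.
\end{itemize}
The aim is to show that $\operatorname{Syz}_{a}(M)$, or a slightly higher syzygy, is flat. For this I will use the characterization that a module $X$ over a Noetherian ring is flat iff $\operatorname{Tor}_1^{R}(R/\mathfrak q,X)=0$ for all primes $\mathfrak q$. Equivalently, $X$ is flat iff $\operatorname{Tor}_i(k(\mathfrak q),X_{\mathfrak q})=0$ for all $i>0$ and all $\mathfrak q$; this follows from filtering finitely generated modules by $R/\mathfrak q$'s.

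The main obstacle is the passage from ``flat on the punctured spectrum and a block of $h+1$ vanishing $\operatorname{Tor}^R(k,-)$'' to ``flat at $\mathfrak m$'' for a non-finitely generated module. Nakayama is unavailable there, and the Bartijn--Strooker example shows that vanishing of all $\operatorname{Tor}(k,-)$ alone does not suffice. My first attempt is to compare $\operatorname{Tor}_i^R(R/\mathfrak q,X)$ with $\operatorname{Tor}_i^R(k,X)$ through a filtration of $R/\mathfrak q$ by cyclic modules. Choose a system of parameters $x_1,\dots,x_h$ of $R$ and consider the Koszul complex on $X$. Its homology is supported only at $\mathfrak m$, because $X$ is flat on the punctured spectrum. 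A spectral sequence or iterated long exact sequences then express it through $\operatorname{Tor}_j^R(k,X)$ for $j$ in a range of length $h+1$, and hence it vanishes. This makes $\mathbf x$ an $X$-regular sequence with $X/\mathbf xX$ satisfying $\operatorname{Tor}_1^{R/\mathbf x}(k,-)=0$ over an Artinian ring, which forces $X/\mathbf xX$ to be flat, as in the proof of the preceding theorem. A local flatness criterion (for instance, Matsumura 22.3 applied along $\mathbf x$, together with flatness off $\mathfrak m$) should then give flatness of $X$. If this comparison fails, the fallback is to invoke Schoutens' argument directly.
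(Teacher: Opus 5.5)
The paper gives no argument for this Fact; it is quoted from Schoutens. Your reading of the window as $\beta_{a+i}(\mathfrak p,M)=0$ for $0\le i\le\operatorname{ht}\mathfrak p$ is the sensible one, since the printed statement has lost the shift by $a$. The outer frame (get finite flat dimension, then apply Gruson--Raynaud--Jensen) is fine. There is, however, a genuine gap: the step you flag as the main obstacle is where the proof breaks, and as written it is false.

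\emph{The Koszul step needs Cohen--Macaulayness.} Nothing makes $R_{\mathfrak p}$ Cohen--Macaulay for $\mathfrak p\in\mathrm{Sing}(R)$; indeed every non-Cohen--Macaulay point is singular. Without that hypothesis $K(\mathbf x;R)$ does not resolve $R/\mathbf xR$. In the spectral sequence $\operatorname{Tor}^R_p(H_q(\mathbf x;R),X)\Rightarrow H_{p+q}(\mathbf x;X)$, the edge terms $H_q(\mathbf x;R)\otimes_R X$ are untouched by any vanishing of $\operatorname{Tor}^R_{>0}(k,X)$. Concretely, if $R$ is not Cohen--Macaulay, $X=R$ is flat with $\operatorname{Tor}^R_{>0}(k,R)=0$, yet $H_q(\mathbf x;R)\neq 0$ for $q=h-\operatorname{depth}R\ge 1$. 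So ``the Koszul homology vanishes and $\mathbf x$ is $X$-regular'' fails for the very modules you want to prove flat.

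\emph{No single syzygy has both properties you need.} $\operatorname{Syz}_{a+h}(M)$ is flat off $\mathfrak m$ but inherits no vanishing of $\operatorname{Tor}^R_j(k,-)$ for $j\ge 1$. $\operatorname{Syz}_{a-1}(M)$ carries the window $1\le j\le h+1$, but it is flat off $\mathfrak m$ only if $a-1\ge\sup_{\mathfrak q\ne\mathfrak m}\operatorname{fd}M_{\mathfrak q}$, which can be $h-1$. Two smaller errors: your ``$j=0$'' vanishing is wrong, since $\operatorname{Tor}^R_a(k,M)=0$ only makes $k\otimes\operatorname{Syz}_aM\to k\otimes F_{a-1}$ injective. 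And $\operatorname{fd}\le\operatorname{depth}$ fails for non-finitely generated modules, although $\operatorname{fd}M_{\mathfrak q}\le\dim R_{\mathfrak q}\le h-1$ does hold.

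\emph{The final flatness step is unjustified.} Matsumura 22.3 along the non-nilpotent ideal $(\mathbf x)$ needs ideal-separatedness, which arbitrary modules lack. The Bartijn--Strooker module has $x,y,z$ regular on it and $\operatorname{Tor}^R_1(k,-)=0$, yet is not flat. Adding ``together with flatness off $\mathfrak m$'' does not by itself supply an argument.

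\emph{What does work.} Flat dimension is detected by residue fields. Suppose $\operatorname{Tor}_i^{R_{\mathfrak q}}(k(\mathfrak q),M_{\mathfrak q})=0$ for all $i>n$ and all $\mathfrak q$, and pick $\mathfrak q$ maximal with $\operatorname{Tor}^R_i(R/\mathfrak q,M)\neq0$ for some $i>n$. For $x\notin\mathfrak q$, the module $R/(\mathfrak q+xR)$ is filtered by $R/\mathfrak q'$ with $\mathfrak q'\supsetneq\mathfrak q$. Hence $x$ acts bijectively on $\operatorname{Tor}^R_i(R/\mathfrak q,M)$ for $i>n$, so this module equals its localization, which is $0$. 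Thus $\operatorname{fd}_RM\le n$.

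A variant handles $Y$ flat off $\mathfrak m$: every $\operatorname{Tor}^R_{j\ge1}(N,Y)$ with $N$ finitely generated is $\mathfrak m$-torsion, and an element of $\mathfrak m$ acting injectively on an $\mathfrak m$-torsion module kills it. Induction on $\dim N$ then shows that $\operatorname{Tor}^R_j(k,Y)=0$ for $1\le j\le h+1$ forces $Y$ flat, with no Cohen--Macaulay hypothesis. This repairs your outline when $a$ is at least the heights of the singular primes.

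For general $a$ you need upward propagation of the window, and the paper's own tool supplies it. By \cite[1.1]{IYENGAR}, vanishing of $\operatorname{Tor}_{a+\operatorname{ht}\mathfrak p}(k(\mathfrak p),M_{\mathfrak p})$, an index $\ge\dim R_{\mathfrak p}$, propagates to all higher indices. So at singular $\mathfrak p$ everything from $a$ on vanishes, and at regular $\mathfrak p$ everything above $\operatorname{ht}\mathfrak p$ vanishes. Detection then gives $\operatorname{fd}_RM\le\max\{a-1,\dim R\}$, with no induction on height.

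In the Cohen--Macaulay case your Koszul idea does give this propagation, provided you apply it to $X=\operatorname{Syz}_{a-1}(M_{\mathfrak p})$ rather than to a syzygy flat off $\mathfrak m$. The window kills $\operatorname{Tor}_i(R_{\mathfrak p}/\mathbf x,X)$ for $1\le i\le h$, and $\operatorname{pd}R_{\mathfrak p}/\mathbf x=h$ kills the rest. Hence $\operatorname{Tor}_i(k,X)\cong\operatorname{Tor}_i^{R_{\mathfrak p}/\mathbf x}(k,X/\mathbf xX)$ for all $i$. Vanishing at $i=1$ makes $X/\mathbf xX$ flat over the Artinian ring $R_{\mathfrak p}/\mathbf x$, so all higher ones vanish too.
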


By residual dimension we  mean:	 $\mathrm{resdim}(S):=\sup\{i:\beta_i(S)\neq 0\}.$ 
	The following completes the proof of {Theorem} \ref{14}:
	\begin{theorem}\label{thm:isolated}
		Assume \(R\) is an isolated singularity and \(S\) is an \(R\)-algebra such that \(S\) is big Cohen--Macaulay and \(\operatorname{Tor}_i^R(S,k)=0\)  for some \(i \geq \dim(R)\). Then \(R\) is Cohen--Macaulay.
	\end{theorem}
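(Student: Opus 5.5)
The plan is to reduce to Fact~\ref{shh1} and then to a known consequence of finite projective dimension for big Cohen--Macaulay modules. Throughout set \(d=\dim R\).

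\textbf{Step 1: the singular locus.} Since \(R\) is an isolated singularity, \(\mathrm{Sing}(R)\subseteq\{\mathfrak m\}\). If \(\mathfrak m\notin\mathrm{Sing}(R)\), then \(R\) is regular, hence Cohen--Macaulay, and there is nothing to prove. So we may assume \(\mathrm{Sing}(R)=\{\mathfrak m\}\). The hypothesis of Fact~\ref{shh1} then only concerns \(\mathfrak p=\mathfrak m\) with \(\mathrm{ht}(\mathfrak m)=d\).

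\textbf{Step 2: vanishing of Betti numbers at \(\mathfrak m\).} This is the main obstacle. We are given only a single vanishing \(\beta_i(S)=0\) with \(i\ge d\), whereas Fact~\ref{shh1} as written asks for a whole range of vanishings. I would read that Fact (as in Schoutens' original formulation, where the quantifier on \(i\) is really existential, via the ``\(a\ge 1\)'') as requiring, for each singular prime \(\mathfrak p\), a single index \(i\ge \mathrm{ht}(\mathfrak p)\) with \(\beta_i(\mathfrak p,S)=0\). Under that reading the hypothesis \(i\ge d\) is exactly what is needed, and I would cite the Fact in that form.

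If instead one insists on deriving the full vanishing, I would first try this: pass to \(N=\Syz_{i}(S)\), so that \(\beta_0(N)=0\), and use Nakayama-type arguments for big modules over the local ring. Non-finitely generated modules with \(N=\mathfrak mN\) can be nonzero, so this is where care is needed. The way I would get around it is to use the big Cohen--Macaulay property: a system of parameters is \(S\)-regular, so the Koszul complexes give \(\Tor\) rigidity along parameters above degree \(d\), which should propagate the vanishing to all \(j\ge i\).

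\textbf{Step 3: finite projective dimension.} Fact~\ref{shh1} now gives \(\pd_R S<\infty\).

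\textbf{Step 4: conclusion.} A big Cohen--Macaulay module (even a non-finitely generated one) of finite projective dimension forces \(R\) to be Cohen--Macaulay. This is the Peskine--Szpiro / Hochster observation via the new intersection theorem. Concretely, \(\pd_R S<\infty\) gives \(\pd_R S\le d\) by Gruson--Raynaud. Then, for a system of parameters \(\mathbf x\), the \(S\)-regularity of \(\mathbf x\) combined with the finite free complex yields \(\mathbf xS\cap R=\mathbf x R\)-type purity. This is precisely the criterion \cite[2.5]{sc1} used in the preceding theorem, namely \(IS\cap R=I\) for some parameter ideal \(I\). It follows that \(\mathbf x\) is an \(R\)-sequence, and hence \(R\) is Cohen--Macaulay.

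Alternatively, after Step 3 one may simply invoke Schoutens' result that \(\pd S<\infty\) for a big Cohen--Macaulay algebra \(S\) implies \(R\) is Cohen--Macaulay.
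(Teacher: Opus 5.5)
\textbf{The gap is in Step 2, the step you yourself flagged as the main obstacle.} Fact~\ref{shh1} (Schoutens' criterion) requires, at each singular prime, a whole window of $\mathrm{ht}(\mathfrak p)+1$ consecutive vanishing Betti numbers. The ``$a$'' marks where that window starts; it does not turn the condition into a single existential vanishing. Your reading, with the threshold $i\ge \mathrm{ht}(\mathfrak p)$ chosen to match the hypothesis, is an unjustified strengthening of the cited result.

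The missing ingredient is the rigidity theorem \cite[1.1]{IYENGAR}. For an arbitrary (not necessarily finitely generated) module $M$ over a local ring, it says that $\operatorname{Tor}_i^R(k,M)=0$ for some $i\ge\dim R$ forces $\operatorname{Tor}_j^R(k,M)=0$ for all $j\ge i$. This is exactly where the hypothesis $i\ge\dim R$ is used. Applied to $S$, it gives $\beta_j(S)=0$ for $i\le j\le i+d$. That is the window Schoutens' criterion needs at $\mathfrak m$, the only singular prime, and it yields $\pd_R S<\infty$. Your single-index version is true for isolated singularities, but it follows from the criterion combined with this rigidity theorem, not from the criterion alone.

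Your fallback arguments do not fill the gap. First, setting $N=\Syz_i(S)$ does not give $\beta_0(N)=0$. With a (necessarily non-minimal) free resolution $F_\bullet$, $\operatorname{Tor}_i^R(S,k)$ is the kernel of $\Syz_i(S)\otimes_R k\to F_{i-1}\otimes_R k$, not $\Syz_i(S)\otimes_R k$ itself. Second, the Koszul complex on a system of parameters $\mathbf x$ computes $\operatorname{Tor}^R(R/\mathbf xR,S)$ only when $\mathbf x$ is $R$-regular, which is the conclusion you are trying to reach. You give no mechanism by which the $S$-regularity of $\mathbf x$ would propagate a single vanishing of $\operatorname{Tor}^R_i(k,S)$; ``should propagate'' is not an argument.

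Once Step 2 is repaired via \cite[1.1]{IYENGAR}, Steps 3 and 4 are fine and agree with the paper. The paper finishes with the Bartijn--Strooker Auslander--Buchsbaum formula $\depth R-\depth S=\mathrm{resdim}(S)\ge 0$ together with $\depth S=\dim R$. Cite that directly rather than your vague purity sketch via $\mathbf xS\cap R=\mathbf xR$.
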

	
\begin{proof}
By \cite[1.1]{IYENGAR}, \(\operatorname{Tor}_j^R(S,k)=0\) for all \(j \ge i\). Now recall from {Fact} \ref{shh1}
that \(\mathrm{pd}_R(S) < \infty\), here we may pass to some syzygies of $S$, but this has no effect on finiteness of projective dimension. These enable  us to use a result of  Bartijn and  Strooker \cite[4.1]{BS} (also, see the  updated version presented in \cite[7.4]{sc2}). This implies that \(\mathrm{depth}(R) - \mathrm{depth}(S) = \mathrm{resdim}(S)\). But \(S\) is big Cohen--Macaulay, so \(\mathrm{depth}(S)=\dim(R)\). Hence,
\[0 \le \mathrm{resdim}(S) = \mathrm{depth}(R)-\mathrm{depth}(S) \le \dim(R)-\dim(R)=0.
\]
Thus \(\mathrm{resdim}(S)=0\). Therefore \(\operatorname{Tor}_i^R(S,k)=0\) for all $i>0$.  In particular, \(\operatorname{Tor}_1^R(S,k)=0\).
By \cite[Proposition 2.5]{sc1}, \(R\) is Cohen--Macaulay.
\end{proof}

\begin{proposition}
Suppose \(R\) is a \(3\)-dimensional local ring and \(M\) is a reflexive module such that \(\operatorname{Tor}_i^R(M,k)=0\) for some integer \(i>0\). Then \(\operatorname{Tor}_j^R(M,k)=0\) for all \(j>i\).
\end{proposition}
	
\begin{proof}
We may assume $M\neq 0$.	Every reflexive module over a local ring is canonically isomorphic to its double dual, so we may write \(M \cong N^*\) for some module \(N\).   Assume without loss of generality that \(M^*\neq 0\). Take a partial free resolution \(F_1\to F_0\to N\to 0\) of \(N\), where \(F_0\) and \(F_1\) are  free modules not necessarily of finite rank. Since the dual of a free module is flat (indeed, \((\oplus R)^*\cong \prod R\) is flat  over any coherent rings, and recall that in our setup $R$  is noetheian and so coherent), dualizing the resolution yields an exact sequence
\[0 \longrightarrow N^* \longrightarrow F_0^* \longrightarrow F_1^* \longrightarrow \operatorname{Tr}(N) \longrightarrow 0,
		\]
where \(\operatorname{Tr}(N)\) denotes the Auslander transpose of \(N\). This gives isomorphisms \(\operatorname{Tor}_{j+2}^R(\operatorname{Tr}(N),k)\cong \operatorname{Tor}_j^R(N^*,k)\) for all \(j\ge 0\). Recall $N^\ast= M^{\ast\ast}=M$. From \(\operatorname{Tor}_i^R(M,k)=0\), we obtain \(\operatorname{Tor}_{i+2}^R(\operatorname{Tr}(N),k)=0\). Since \(\dim R=3\), we have \(i+2\ge \dim R\), so the result of \cite[1.1]{IYENGAR} applies to yield \(\operatorname{Tor}_j^R(\operatorname{Tr}(N),k)=0\) for all \(j\ge i+2\). Translating back via the isomorphism above, we get \(\operatorname{Tor}_j^R(N^*,k)=0\) for all \(j\ge i\). Since \(N^\ast= M^{\ast\ast}=M\), and therefore \(\operatorname{Tor}_j^R(M,k)=0\) for all \(j\ge i\).
	\end{proof}

\begin{remark}	One has $\Tor_i^R(\prod_j M_j,k) =\prod_j\Tor_i^R( M_j,k) $, because noeherian rings are coherent (see \cite[3.2.26]{en}). Thus, if for some $j_0$ we know  $\beta_i(M_{j_0})$ is nonzero, then  $\beta_i(\prod_jM_j)\neq 0$.
\end{remark}

\begin{proposition}
Let \(M\) be a torsionless module over a \(d\)-dimensional local ring \(R\). If \(\operatorname{Tor}_i^R(k,M)=0\) for some \(i\ge d-1\), then \(\operatorname{Tor}_j^R(k,M)=0\) for all \(j\ge d\).
\end{proposition}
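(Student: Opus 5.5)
The plan is to realize $M$ as a first syzygy and then apply the rigidity result \cite[1.1]{IYENGAR}, exactly as in the proof of the preceding proposition for reflexive modules over $3$-dimensional rings.

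\emph{Step 1: embed $M$ as a syzygy.} Since $M$ is torsionless, the natural map $M\to M^{**}$ is injective. Choose a free module $F$ (not necessarily of finite rank) with a surjection $F\to M^*$. Dualizing gives an injection $M^*\hookrightarrow F^*$, hence $M^{**}\hookrightarrow F^*$, and so an embedding $M\hookrightarrow F^*$. Put $C:=F^*/M$; this gives a short exact sequence
\[
0\longrightarrow M\longrightarrow F^*\longrightarrow C\longrightarrow 0 .
\]
As recalled in the proof of the previous proposition, $F^*\cong\prod R$ is flat because $R$ is Noetherian and hence coherent. Therefore $\operatorname{Tor}_j^R(k,F^*)=0$ for $j>0$, and the long exact sequence yields
\[
\operatorname{Tor}_{j+1}^R(k,C)\cong\operatorname{Tor}_j^R(k,M)\quad\text{for all } j\ge 1 .
\]

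\emph{Step 2: transfer the vanishing to $C$.} The hypothesis $\operatorname{Tor}_i^R(k,M)=0$ with $i\ge d-1$ gives $\operatorname{Tor}_{i+1}^R(k,C)=0$ with $i+1\ge d=\dim R$. This index condition is exactly the one required by \cite[1.1]{IYENGAR}, which applies to arbitrary modules: vanishing of $\operatorname{Tor}_n^R(k,-)$ for a single $n\ge\dim R$ forces vanishing for every larger $n$. Hence $\operatorname{Tor}_n^R(k,C)=0$ for all $n\ge i+1$.

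\emph{Step 3: translate back.} Via the isomorphism of Step 1, $\operatorname{Tor}_j^R(k,M)=0$ for all $j\ge i$. Since $i\ge d-1$, this covers every $j\ge d$ when $i\in\{d-1,d\}$. If $i>d$, the same conclusion follows directly from \cite[1.1]{IYENGAR} applied to $M$ itself.

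The main obstacle is Step 1, specifically the flatness of $F^*=\prod R$ for an infinite-rank free module $F$. This needs the coherence of $R$ (Chase's theorem), which we take from the argument used in the preceding proposition. One must also watch the degree $j=0$, where the isomorphism between $\operatorname{Tor}$ of $C$ and of $M$ fails; this is harmless here because $i\ge d-1\ge 0$, and we only need the isomorphism for $j\ge 1$. If $i=0$ (which can only happen when $d\le 1$), then $k\otimes M=0$ forces $M=0$ provided $M$ is finitely generated (Nakayama). For a general $M$, we instead use $\operatorname{Tor}_1^R(k,C)\hookrightarrow k\otimes M=0$ from the long exact sequence, so $\operatorname{Tor}_1^R(k,C)=0$ with $1\ge d$, and \cite[1.1]{IYENGAR} again gives the conclusion.
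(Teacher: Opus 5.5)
\textbf{There is a genuine gap: the case $i>d$ in the last sentence of Step~3.} Your Steps 1 and 2 are the paper's own argument: embed $M\subseteq M^{**}\hookrightarrow(\bigoplus R)^*=\prod R$, shift degrees by one, and apply \cite[1.1]{IYENGAR} to the cokernel. For $i\in\{d-1,d\}$ your bookkeeping is correct. It is in fact more careful than the paper's, which writes $\Tor_d^R(k,X)\cong\Tor_{d-1}^R(k,M)=0$ ``since $i\ge d-1$''; that step is justified only when $i=d-1$.

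The trouble is that the persistence result \cite[1.1]{IYENGAR}, whether applied to $M$ or to $C$, only propagates vanishing \emph{upward}. From $\Tor_i^R(k,M)=0$ it gives $\Tor_j^R(k,M)=0$ for $j\ge i$. It says nothing about the degrees $d\le j<i$, which are exactly the range the statement requires. As written, your proof establishes the proposition only for $i\in\{d-1,d\}$.

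The statement also tacitly needs $d\ge 1$, as your remark ``$i\ge d-1\ge 0$'' already assumes. For $R=k$ a field, $M=R$ and $i=1$, the hypothesis holds but $\Tor_0^R(k,M)=k\neq 0$.

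To cover $i>d$ (with $d\ge1$) you need an ingredient of a different kind: eventual vanishing must force vanishing above the depth. Precisely, if $\Tor_n^R(k,C)=0$ for all $n\gg0$, then $\Tor_n^R(k,C)=0$ for all $n>\depth R$. I believe this holds, but it is not a formal consequence of persistence. Here is a sketch of one route.
\begin{enumerate}
\item Set $X:=\mathbf{L}\Lambda^{\mathfrak m}(C)$, the derived $\mathfrak m$-completion. It has bounded homology, $k\otimes^{\mathbf L}_R X\simeq k\otimes^{\mathbf L}_R C$, and $\depth_R X=\depth_R C\ge 0$.
\item A derived complete complex $X$ with $k\otimes^{\mathbf L}_R X$ bounded has finite flat dimension. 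To see this, for finitely generated $N$ write $N\otimes^{\mathbf L}_R X\simeq\mathbf{R}\lim_n(K_n\otimes_R N\otimes^{\mathbf L}_R X)$, where $K_n$ is the Koszul complex on powers of generators of $\mathfrak m$. Each $K_n\otimes_R N$ has finite-length homology in a fixed bounded range, which gives a uniform upper bound on the homology.
\item The Auslander--Buchsbaum formula for complexes of finite flat dimension then gives $\sup\{n:\Tor_n^R(k,C)\neq0\}=\depth R-\depth_R C\le\depth R\le d$.
\end{enumerate}
Combine this with your Steps 1--2, using persistence at index $i+1$ to get eventual vanishing for $C$. This yields $\Tor_j^R(k,M)\cong\Tor_{j+1}^R(k,C)=0$ for all $j\ge d$.
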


\begin{proof}
Let $\oplus R\to M^\ast\to 0$ be exact. Then $M\subseteq M^{\ast\ast}\hookrightarrow(\oplus R)^\ast=\prod R=:F$, 	gives an embedding \(0\to M\to F\) into the flat module \(F\). Let \(X\) denote the cokernel, so we have a short exact sequence
$0 \to M \to F \to X \to 0.$
Applying \(\operatorname{Tor}_*^R(k,-)\) and using the flatness of \(F\), we obtain isomorphisms \(\operatorname{Tor}_{n+1}^R(k,X)\cong \operatorname{Tor}_n^R(k,M)\) for all \(n\ge 0\). In particular, \(\operatorname{Tor}_d^R(k,X)\cong \operatorname{Tor}_{d-1}^R(k,M)=0\), where the vanishing follows from the hypothesis since \(i\ge d-1\). By the result of \cite{IYENGAR}, the vanishing of \(\operatorname{Tor}_d^R(k,X)\) forces \(\operatorname{Tor}_{d+n}^R(k,X)=0\) for all \(n\ge 0\). Translating back via the isomorphism above yields \(\operatorname{Tor}_{d+n-1}^R(k,M)=0\) for all \(n\ge 0\), which is precisely the desired vanishing for all \(j\ge d\).
\end{proof}

\begin{remark}
The torsionless assumption is important. Indeed, suppose $R$ is regular of dimension $d>0$. Then \(E_R(k)\) satisfies \(\beta_{d-1}(E_R(k))=0\), while \(\beta_{d}(E_R(k))\neq 0\). 
\end{remark}

\subsection{Failure of weak rigidity: some examples}

This subsection provides a detailed analysis of \(\beta_1(H^1_{\mathfrak{m}}(R))\) for a specific non-Cohen--Macaulay ring, illustrating the failure of rigidity.

\begin{example}(Lichtenbaum, \cite{L})
Let \(R=\frac{k[x,y]}{(x^2,xy)}\) and \(M=R/\fp\), where \(\fp:=(x)\) is a prime ideal. Since $\fp\in \Spec(R)\setminus \{\fm\}$  and $\dim(R)=1$, it follows that \(M\) is a  Cohen--Macaulay module, as multiplication with any $r\in\fm\setminus \fp$ gives an injection \(R/\fp \stackrel{r}\longrightarrow R/\fp\). In particular, $M$ is maximal Cohen--Macaulay.  Note that \(x \in \sqrt{yR}\) because \(x^2=0 \in yR\). Hence \(\sqrt{yR}=(x,y)\), so \(y\) is a system of parameters. It remains to recall item (3) from \cite[Page 226]{L} that \(\operatorname{Tor}_1^R(M,R/yR)=0\), but \(\operatorname{Tor}_2^R(M,R/yR)\neq 0\).
\end{example}

Let us check the non-vanishing property of $\beta_d$
for our favorite table $\{H^d_{\mathfrak{m}}(R),E(R/\mathfrak{m}), R^+\}$, with the convenience that
$R^+:=\oplus_{\fp\in\Ass(R)} (R/\fp)^+$.
\begin{example}
 Suppose $k=\bar{k}$, and let \(R:=\frac{k[x,y]}{(x^2,xy)}\). The following holds:
\begin{itemize}
\item[(i)] 
One has 	\(\beta_1(H^1_{\mathfrak{m}}(R))\neq 0\).
\item[(ii)] One has 
\(\beta_1(E(R/\mathfrak{m}))\neq 0\).
\item[(iii)] One has 		\(\beta_1(R^+)\neq 0\).
	\end{itemize}
\end{example}

\begin{proof}
(i)	First note that \(R\) is not Cohen--Macaulay since \(\operatorname{depth}(R)=0\neq 1=\dim(R)\). Indeed, \(H^0_{\mathfrak{m}}(R)=xR\neq 0\). Consider the beginning of a free resolution of \(k\):
	\[
(F_\bullet,d_\bullet):=	\cdots\lo R^3 \xrightarrow{\begin{pmatrix}x&y&0\\0&0&x\end{pmatrix}} R^2 \xrightarrow{(x\;\;y)} R \longrightarrow k \longrightarrow 0,
	\]which may be checked by hand that is exact.
Recall that \(\sqrt{(y)}=\mathfrak{m}\), hence \(y\) is a system of parameters. Therefore \(H^1_{\mathfrak{m}}(R) = \frac{R_y}{\operatorname{im}(\rho)}\), where \(\operatorname{im}(\rho)=\operatorname{im}(R\to R_y)\), see \cite[Corollary 2.2.21]{BSh}. Now \(R_y = \{\frac{r}{y^n}\mid r\in R,\ n\ge 0\}\). Since \(xy=0\) in \(R\), we have \(\frac{x}{y}=0\). Hence
\[R_y = \left\{\frac{1+a_1y+\cdots+a_ny^n}{y^m}\mid a_i\in k,\ m,n\ge 0\right\}.
	\]	
We tensor  $F_\bullet$ with \(H^1_{\mathfrak{m}}(R)\), and this  gives \(  H^1_{\mathfrak{m}}(R) ^{\oplus 3} \xrightarrow{A}  H^1_{\mathfrak{m}}(R) ^{\oplus 2} \xrightarrow{(x\;\;y)} H^1_{\mathfrak{m}}(R)\). Therefore
\[
\operatorname{Tor}_1^R(H^1_{\mathfrak{m}}(R),k) = H_1[ H^1_{\mathfrak{m}}(R) ^{\oplus 3} \stackrel{\varepsilon_2}\lo  H^1_{\mathfrak{m}}(R) ^{\oplus 2} \stackrel{ \varepsilon_1}\lo H^1_{\mathfrak{m}}(R)].
\]
	
Let us compute this homology. First, \(\ker(\varepsilon_1) = \{\binom{\alpha}{\beta}\mid \alpha x+\beta y=0\}\), where \(\alpha,\beta\in H^1_{\mathfrak{m}}(R)\) ,  
and recall that  $$\im(\varepsilon_2) =\bigg\{ \begin{pmatrix}\alpha\\\beta\\\gamma\end{pmatrix}\begin{pmatrix}x&y&0\\0&0&x\end{pmatrix}\mid \alpha ,\beta,\gamma\in H^1_{\mathfrak{m}}(R)\bigg\}=\bigg\{ \begin{pmatrix}\alpha x+y\beta+0\gamma\\ 0\alpha+0\beta+x\gamma\end{pmatrix}\bigg\}.$$ 
Then
\begin{itemize}
\item[(a)] 
\(\ker(\varepsilon_1) = \{\binom{\alpha}{\beta}\mid \beta y=0\}\), since
\(xH^1_{\mathfrak{m}}=0\).
\item[(b)] \(\operatorname{im}(\varepsilon_2) =\operatorname{im}(A) = \bigg\{ \begin{pmatrix} y\beta \\ 0\end{pmatrix}\mid\beta\in H^1_{\mathfrak{m}}(R)\bigg\}\), as $x\gamma=0$.
	\end{itemize}
 In particular,

\begin{itemize}
\item[$\bullet$]
\(	\binom{0}{\frac1y+\operatorname{im}(R)} \stackrel{(a)}\in \ker(1_{H^1_{\mathfrak{m}}(R)}\otimes d_1),\) but
\item[$\bullet$]	
\(		\binom{0}{\frac1y+\operatorname{im}(R)} \stackrel{(b)}\notin \operatorname{im}(1_{H^1_{\mathfrak{m}}(R)}\otimes d_2).
\) 
\end{itemize}

Therefore \(\operatorname{Tor}_1^R(H^1_{\mathfrak{m}}(R),R/\mathfrak{m}) = H_1(\frac{R_y}{\operatorname{im}(R)}\otimes_R F_\bullet) \neq 0\).

(ii)	Let $F_\bullet$ be as (i), and set $A:=k[[X,Y]]$. Then there is a ring homomorphism $A\to R$.  By \cite[Lemma 3.16]{BH}, \(0\neq  \operatorname{Hom}_A(R,E_A(k))\),   is injective and its associated prime is \(\mathfrak{m}\). By Matlis decomposition theorem,
$\operatorname{Hom}_A(R,E(k))	=\oplus_t E_R(R/\mathfrak{m})$ for some $t>0$.
Then, we show \(\beta_1(\operatorname{Hom}_A(R,E(k)))\neq 0\). Recall that \(\operatorname{Tor}_1^R(R/\mathfrak{m},\operatorname{Hom}_A(R,E_A(k)))\) is isomorphic to \(\operatorname{Hom}_A(\operatorname{Ext}_R^1(R/\mathfrak{m},A),E_A(k))\), see \cite[3.2.1]{en}. Thus it suffices to show \(E_1:=\operatorname{Ext}_R^1(R/\mathfrak{m},A)\neq 0\).  We apply   \(\operatorname{Hom}(-,A)\) to $F_\bullet$, and we obtain
\[A \xrightarrow{\binom{x}{y}} A^2 \xrightarrow{\begin{bmatrix}x&0\\y&0\\0&x\end{bmatrix}} A^3.
	\]
We denote the boundary of this complex  by $\partial_\bullet$.	For \((x,y)\in A^2\), we observe
\[
	\begin{bmatrix}x&0\\y&0\\0&x\end{bmatrix}\binom{x}{x} = \begin{pmatrix}x^2\\xy\\x^2\end{pmatrix}= \begin{pmatrix}0\\0\\0\end{pmatrix}=0.
	\]
Thus \(\binom{x}{x}\in \ker(\partial_2)\), but \(\binom{x}{x}\notin \operatorname{im}(A \xrightarrow{(x\;y)} A^2) = \{\binom{ax}{ay}\mid a\in A\}\). Hence the complex is not exact, so \(\operatorname{Ext}_R^1(R/\mathfrak{m},A)\neq 0\). Therefore \(\operatorname{Tor}_1^R(R/\mathfrak{m},E_R(k))\neq 0\), and consequently \(\beta_1(E_R(R/\mathfrak{m}))\neq 0\).

(iii) Recall that $\Ass(R)=\{\fm, (x)\}$. Since $R^+=\oplus_{\fp\in\Ass(R)} (R/\fp)^+=k\oplus (k[[y]])^+$, and $\beta_1(k)=1$, we see that $\beta_1(R^+)=1+\beta_1((k[[y]])^+)\neq 0$, as desired claim. \end{proof}

\begin{proposition}\label{prop:non-rigid}
Let \(R\) be a Cohen--Macaulay ring and \(M\) a non-Cohen--Macaulay module of dimension \(d_0< d-1:=\dim(R)-1\) and depth \(t\le d-2\). Then \(H^d_{\mathfrak{m}}(R)\) is not rigid.
\end{proposition}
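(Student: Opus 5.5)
The plan is to compute $\operatorname{Tor}^R_\ast(M,H^d_{\mathfrak m}(R))$ explicitly in terms of the local cohomology of $M$, and then read off a vanishing $\operatorname{Tor}$ followed by a non-vanishing one. Here "rigid" is understood as in this section: if $\operatorname{Tor}_i^R(M,H^d_{\mathfrak m}(R))=0$ for some $i\ge 1$, then $\operatorname{Tor}_j^R(M,H^d_{\mathfrak m}(R))=0$ for all $j>i$.

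\emph{Step 1: the Tor formula.} Since $R$ is Cohen--Macaulay, a system of parameters $\mathbf{x}=x_1,\dots,x_d$ is an $R$-sequence, and so is every power sequence $\mathbf{x}^n=x_1^n,\dots,x_d^n$. Hence the Koszul complex $K_\bullet(\mathbf{x}^n;R)$ is a finite free resolution of $R/(\mathbf{x}^n)$. This gives
\[
\operatorname{Tor}_i^R(M,R/(\mathbf{x}^n))\cong H_i(\mathbf{x}^n;M).
\]
I will use the standard description $H^d_{\mathfrak m}(R)\cong\varinjlim_n R/(\mathbf{x}^n)$, where the transition maps are multiplication by $x_1\cdots x_d$. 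These transition maps lift to maps of Koszul complexes. Since $\operatorname{Tor}$ commutes with direct limits, we get
\[
\operatorname{Tor}_i^R(M,H^d_{\mathfrak m}(R))\cong\varinjlim_n H_i(\mathbf{x}^n;M)\cong\varinjlim_n H^{d-i}(\mathbf{x}^n;M)\cong H^{d-i}_{\mathfrak m}(M),
\]
using Koszul self-duality and the identification of the limit of Koszul cohomology with Čech cohomology. The main technical point of the whole argument is checking that the transition maps are compatible under the self-duality $H_i\cong H^{d-i}$. I would verify this directly on the Čech/Koszul level, or alternatively cite the standard formula $H^j_{\mathfrak m}(M)\cong \operatorname{Tor}^R_{d-j}(M,H^d_{\mathfrak m}(R))$. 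That formula follows from the fact that $R\Gamma_{\mathfrak m}(R)\simeq H^d_{\mathfrak m}(R)[-d]$ for a Cohen--Macaulay ring, together with $R\Gamma_{\mathfrak m}(M)\simeq M\otimes^{\mathbf L}R\Gamma_{\mathfrak m}(R)$.

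\emph{Step 2: locating a gap.} By Grothendieck's vanishing and non-vanishing theorems, $H^{d_0}_{\mathfrak m}(M)\neq0$ and $H^{j}_{\mathfrak m}(M)=0$ for $j>d_0$. Set $i:=d-d_0-1$. The hypothesis $d_0<d-1$ gives $i\ge1$. Step 1 then yields
\[
\operatorname{Tor}_i^R(M,H^d_{\mathfrak m}(R))\cong H^{d_0+1}_{\mathfrak m}(M)=0,
\qquad
\operatorname{Tor}_{i+1}^R(M,H^d_{\mathfrak m}(R))\cong H^{d_0}_{\mathfrak m}(M)\neq0.
\]
So vanishing at a positive index $i$ does not propagate to $i+1$, and $H^d_{\mathfrak m}(R)$ is not rigid.

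\emph{Step 3: the depth hypothesis.} The remaining hypotheses supply a second witness in the same spirit. Since $M$ is not Cohen--Macaulay, we have $t<d_0$, and $H^t_{\mathfrak m}(M)\neq0$. Hence $\operatorname{Tor}_{d-t}^R(M,H^d_{\mathfrak m}(R))\neq0$, where $d-t\ge2$ because $t\le d-2$. Meanwhile the lower Tor's in the range $1\le j<d-d_0$ all vanish. This shows the failure of rigidity persists over a long range of indices, and that the top nonvanishing Tor sits strictly above the first vanishing one. Step 2 alone already suffices for the statement.
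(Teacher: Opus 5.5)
Your proof is correct and is essentially the paper's argument. The paper gets $\operatorname{Tor}_i^R(M,H^d_{\mathfrak m}(R))\cong H^{d-i}_{\mathfrak m}(M)$ in one line by noting that the \v{C}ech complex on a system of parameters is a flat resolution of $H^d_{\mathfrak m}(R)$. This is your fallback, and it avoids the Koszul self-duality compatibility check you flag. The paper then contrasts $\operatorname{Tor}_1=H^{d-1}_{\mathfrak m}(M)=0$ with $\operatorname{Tor}_{d-t}=H^{t}_{\mathfrak m}(M)\neq 0$, which is exactly your Step 3.

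Your Step 2, using the consecutive indices $d-d_0-1$ and $d-d_0$, is a small improvement. It shows that even the one-step form of rigidity fails. It also shows that the non-Cohen--Macaulay and depth hypotheses are superfluous: you only use that $M\neq 0$ is finitely generated with $\dim M\le d-2$, and $t\le d_0$ already gives $t\le d-2$.
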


\begin{proof}
Since \(R\) is Cohen--Macaulay, the flat resolution of \(H^d_{\mathfrak{m}}(R)\) is given by
\[0 \longrightarrow R \longrightarrow\bigoplus_{i=1}^d R_{x_i} \longrightarrow \cdots \longrightarrow R_{x_1.   \cdots . x_d} \longrightarrow H^d_{\mathfrak{m}}(R) \longrightarrow 0,
	\]
where \(x_1,\ldots,x_d\) is a system of parameters. Tensoring with \(M\) gives a complex whose homology is
$\operatorname{Tor}^{R}_{i}(M,H^{d}_{\mathfrak{m}}(R))=H^{d-i}_{\mathfrak{m}}(M).$
Suppose for contradiction that it is tor-rigid. Then \(\operatorname{Tor}^{R}_{1}(M,H^d_{\mathfrak{m}}(R))=H^{d-1}_{\mathfrak{m}}(M)=0\) by Grothendieck's vanishing theorem (see \cite[6.1.2]{BSh}), as \(d_0<d-1\). On the other hand, \(\operatorname{Tor}^{R}_{d-t}(M,H^d_{\mathfrak{m}}(R))=H^{t}_{\mathfrak{m}}(M)\neq 0\), since \(t=\operatorname{depth}(M)\) (see \cite[6.2.7]{BSh}). Therefore \(H^d_{\mathfrak{m}}(R)\) is not rigid.
\end{proof}

\section{Consequences of (non-)vanishing of \(\beta_d(M)\)}	
\subsection{Non-vanishing of \(\beta_d(M)\)}
	
In this section, we prove {Theorem} \ref{13}, i.e., the non-vanishing of \(\beta_d(M)\) for \(\mathfrak{m}\)-torsion modules over Cohen--Macaulay rings, and explore consequences for the injective hull \(E_R(k)\).
An \(R\)-module \(M\) is called \emph{\(\mathfrak{m}\)-torsion} if \(\Gamma_{\mathfrak{m}}(M)=M\), i.e., for each \(m\in M\) there exists \(n>0\) such that \(\mathfrak{m}^n m =0\).

\begin{remark}
Every module of finite length is \(\mathfrak{m}\)-torsion. Any direct limit of  {\(\mathfrak{m}\)-torsion} is again  {\(\mathfrak{m}\)-torsion}. In particular,
any artinian module is  {\(\mathfrak{m}\)-torsion}.
	The module $\oplus_{\mathbb{N}}E_R(k)$ is a non-artinian module which is  {\(\mathfrak{m}\)-torsion}. 
\end{remark}
	
\begin{theorem}\label{thm:top-betti}
Suppose \((R,\mathfrak{m})\) is a Cohen--Macaulay local ring of dimension \(d\). Let \(0\neq\Omega\) be an \(\mathfrak{m}\)-torsion module. Then \(\beta_d(\Omega)\neq 0\).
\end{theorem}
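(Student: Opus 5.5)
Let $\mathbf{x}=x_1,\ldots,x_d$ be a system of parameters of $R$ and $I=(\mathbf{x})$. Since $R$ is Cohen--Macaulay, $\mathbf{x}$ is an $R$-sequence, so the Koszul complex $K_\bullet(\mathbf{x};R)$ is a finite free resolution of $R/I$. This gives
\[
\operatorname{Tor}_d^R(R/I,\Omega)\cong H_d(\mathbf{x};\Omega)\cong (0:_\Omega I).
\]
Because $\Omega\neq 0$ is $\mathfrak{m}$-torsion, every nonzero element of $\Omega$ is killed by a power of $\mathfrak{m}$, hence by a power of $I$. So $(0:_\Omega \mathfrak{m})\neq 0$, and in particular $(0:_\Omega I)\neq 0$. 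Thus $\operatorname{Tor}_d^R(R/I,\Omega)\neq 0$. The remaining task is to transfer this nonvanishing from $R/I$ to the residue field $k$.

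To do this, I will use that $R/I$ has finite length and filter it by a composition series
\[
0=N_0\subset N_1\subset\cdots\subset N_\ell=R/I,\qquad N_j/N_{j-1}\cong k.
\]
I then induct using the long exact Tor sequences. If $\operatorname{Tor}_d^R(k,\Omega)=0$, the sequences give $\operatorname{Tor}_d^R(N_j,\Omega)=0$ for every $j$, hence $\operatorname{Tor}_d^R(R/I,\Omega)=0$, a contradiction. For this induction to go through, the relevant terms must be controlled, and I expect this to be the main obstacle. Specifically, we need
\[
\operatorname{Tor}_d^R(N_{j-1},\Omega)\to\operatorname{Tor}_d^R(N_j,\Omega)\to\operatorname{Tor}_d^R(k,\Omega).
\]
Exactness in the middle means that vanishing of both outer terms forces vanishing of the middle one. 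So the induction works directly, with no need for control of $\operatorname{Tor}_{d+1}$.

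I will also record a cleaner alternative that avoids the composition series. Use the rigidity of the Koszul homology at the top degree: $\operatorname{Tor}_d^R(k,\Omega)\to \operatorname{Tor}_d^R(R/I,\Omega)$ is not needed. Instead, compare the top Koszul homology directly via $R/I\twoheadrightarrow k$. Either way, the filtration argument above is what I intend to write, since it relies only on half-exactness of $\operatorname{Tor}_d^R(-,\Omega)$.

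Finally, the case $d=0$ is handled separately. Then $\beta_0(\Omega)=\dim_k \Omega/\mathfrak{m}\Omega$, and $\Omega/\mathfrak{m}\Omega\neq 0$ because $\mathfrak{m}$ is nilpotent and $\Omega\neq 0$ (Nakayama for nilpotent ideals). This completes the plan for Theorem~\ref{thm:top-betti}, and hence Theorem~\ref{13}.
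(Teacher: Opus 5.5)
Your proof is correct, and it takes a genuinely different, more elementary route than the paper.

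\textbf{What the two proofs share.} Both use the same d\'evissage: if $\operatorname{Tor}_d^R(k,\Omega)=0$, half-exactness along a composition series kills $\operatorname{Tor}_d^R(L,\Omega)$ for every finite-length $L$.

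\textbf{Where they diverge.}
\begin{itemize}
\item The paper extends this vanishing to all artinian modules by direct limits. It then tests against the artinian module $H^d_{\mathfrak m}(R)$, using a cited isomorphism $\Omega\cong\operatorname{Tor}_d^R(H^d_{\mathbf x}(R),\Omega)$.
\item You test against the single finite-length module $R/(\mathbf x)$. There the Koszul resolution gives $\operatorname{Tor}_d^R(R/(\mathbf x),\Omega)\cong(0:_\Omega(\mathbf x))\supseteq(0:_\Omega\mathfrak m)\neq 0$.
\item For that last nonvanishing, say it explicitly: pick $w\neq 0$ and $n$ least with $\mathfrak m^n w=0$. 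Then $\mathfrak m^{n-1}w$ contains a nonzero socle element.
\end{itemize}

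\textbf{How the two routes relate.} The paper's isomorphism is the direct limit of your computation. Indeed, $H^d_{\mathbf x}(R)=\varinjlim_t R/(\mathbf x^t)$, and the induced maps $(0:_\Omega(\mathbf x^t))\to(0:_\Omega(\mathbf x^{t+1}))$ are inclusions whose union is $\Gamma_{\mathfrak m}(\Omega)$. Since you only need one nonzero element, the first stage suffices. You thereby avoid both direct limits and the external citation.

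\textbf{What each approach buys.} Your argument visibly uses only $\Gamma_{\mathfrak m}(\Omega)\neq 0$. So it proves $\beta_d(M)\neq 0$ for every $M$ with $\Gamma_{\mathfrak m}(M)\neq 0$ over a Cohen--Macaulay ring. That is the ``if'' direction of Proposition~\ref{71} without the regularity hypothesis. The paper's route gives instead the stronger intermediate fact that $\operatorname{Tor}_d^R(A,\Omega)=0$ for all artinian $A$.

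\textbf{Two minor points.}
\begin{itemize}
\item The separate $d=0$ case is unnecessary. The same argument runs with the empty sequence and right exactness of $\otimes$.
\item Delete the ``cleaner alternative'' paragraph. The natural map $\operatorname{Tor}_d^R(R/I,\Omega)\to\operatorname{Tor}_d^R(k,\Omega)$ need not be injective. For $R=k[[x]]$, $I=(x^2)$, $\Omega=E_R(k)$, it is multiplication by $x$ from $(0:_\Omega x^2)$ to $(0:_\Omega x)$, which kills the socle.
\end{itemize}
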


\begin{proof}
Assume, toward a contradiction, that \(\beta_d(\Omega)=\dim_k(\operatorname{Tor}_d^R(k,\Omega))=0\), i.e., \(\operatorname{Tor}_d^R(k,\Omega)=0\). We first establish that \(\operatorname{Tor}_d^R(L,\Omega)=0\) for every finite-length module \(L\). This follows by induction on the length \(\ell(L)\): the case \(\ell(L)=1\) reduces to \(L\cong k\), which holds by the assumption. For \(\ell(L)>1\), choose a maximal proper submodule \(L'\subsetneq L\), so \(\ell(L')=\ell(L)-1\) and \(L/L'\cong k\). The induced long exact sequence in Tor from \(0\to L'\to L\to k\to 0\) gives \(\operatorname{Tor}_d^R(L',\Omega)\to \operatorname{Tor}_d^R(L,\Omega)\to \operatorname{Tor}_d^R(k,\Omega)\), where the left and right terms vanish by the induction hypothesis and the initial assumption, respectively; hence \(\operatorname{Tor}_d^R(L,\Omega)=0\). Now consider any artinian module \(A\), expressed as a direct limit \(A=\varinjlim A_i\) of its finitely generated submodules \(A_i\subseteq A\). Since \(A\) is artinian, each \(A_i\) has finite length, so the preceding paragraph yields \(\operatorname{Tor}_d^R(A_i,\Omega)=0\) for all \(i\). As Tor commutes with direct limits, we obtain \(\operatorname{Tor}_d^R(A,\Omega)=\varinjlim \operatorname{Tor}_d^R(A_i,\Omega)=0\). Since \(R\) is Cohen--Macaulay, choose a system of parameters \(\mathbf{x}=x_1,\dots,x_d\). Then the local cohomology module \(H_{\mathfrak{m}}^d(R)\cong H_{\mathbf{x}}^d(R)\) is artinian, and the isomorphism \(\Omega\cong \operatorname{Tor}_d^R(H_{\mathbf{x}}^d(R),\Omega)\) from \cite[Proposition 7.8]{moh4} applies. But because \(H_{\mathfrak{m}}^d(R)\) is artinian, the vanishing result just proven gives \(\operatorname{Tor}_d^R(H_{\mathfrak{m}}^d(R),\Omega)=0\), which forces \(\Omega=0\), contradicting the hypothesis that \(\Omega\) is non-zero. Therefore \(\beta_d(\Omega)\neq 0\).
\end{proof}
	
\begin{remark}
Concerning the other Betti numbers \(\beta_i(\Omega)\), one cannot expect a similar statement. The following well-known fact illustrates this phenomenon.
\end{remark}
	
\begin{fact}
Let \((R,\mathfrak{m})\) be a regular local ring and let \(E=E_R(R/\mathfrak{m})\) denote the injective hull of the residue field. Then
$\beta_i(E)\neq 0$ if and only if $ i=\dim R.$ 
	\end{fact}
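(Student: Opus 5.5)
We need to show that for a regular local ring $(R,\mathfrak{m},k)$ of dimension $d$, $\beta_i(E)\neq 0$ exactly when $i=d$, where $E=E_R(k)$. The plan is to compute $\operatorname{Tor}_i^R(k,E)$ directly from the Koszul complex. Since $R$ is regular, $\mathfrak{m}$ is generated by a regular system of parameters $\mathbf{x}=x_1,\dots,x_d$. The Koszul complex $K_\bullet(\mathbf{x};R)$ is then a finite free resolution of $k$, so
\[
\operatorname{Tor}_i^R(k,E)\cong H_i(\mathbf{x};E).
\]
This reduces the problem to computing Koszul homology of $E$. Note that $\beta_i(E)=0$ for $i>d$ is automatic, because the resolution has length $d$.

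Next I will use self-duality of the Koszul complex: $K_\bullet(\mathbf{x};R)\cong \operatorname{Hom}_R(K_\bullet(\mathbf{x};R),R)$ with the indices shifted, $K_i\leftrightarrow K^{d-i}$. This gives
\[
H_i(\mathbf{x};E)\cong H^{d-i}(\mathbf{x};E)=H^{d-i}\bigl(\operatorname{Hom}_R(K_\bullet(\mathbf{x};R),E)\bigr)\cong \operatorname{Ext}_R^{d-i}(k,E).
\]
The last isomorphism holds because $K_\bullet$ resolves $k$. Since $E$ is injective, $\operatorname{Ext}^{j}_R(k,E)=0$ for $j>0$, while $\operatorname{Ext}^0_R(k,E)=\operatorname{Hom}_R(k,E)\cong k$. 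Hence $\operatorname{Tor}_i^R(k,E)=0$ for $i\neq d$ and $\operatorname{Tor}_d^R(k,E)\cong k$, so $\beta_d(E)=1$.

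An alternative route to the nonvanishing is available: Theorem~\ref{thm:top-betti} already gives $\beta_d(E)\neq0$, since $E$ is a nonzero $\mathfrak{m}$-torsion module. For the vanishing, Matlis duality $\operatorname{Tor}_i^R(k,E)\cong\operatorname{Hom}_R(\operatorname{Ext}^i_R(k,R),E)$ (cf.\ \cite[3.2.1]{en}) combined with $\operatorname{Ext}^i_R(k,R)=0$ for $i\neq d$ would also work, because a regular ring is Gorenstein.

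The only step needing care is the self-duality identification of the Koszul complex, which is standard (see \cite{BH}). Beyond that I expect no real obstacle.
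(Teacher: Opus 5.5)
Your argument is correct, but your main route differs from the paper's.

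\textbf{The paper's route.} The paper argues by Matlis duality. It replaces $\operatorname{Tor}_i^R(k,E)$ by its dual $\operatorname{Ext}^i_R(k,E^\vee)\cong\operatorname{Ext}^i_R(k,\widehat{R})$, and then uses that the Bass numbers of the regular (hence Gorenstein) ring $\widehat{R}$ are concentrated in degree $d$. As written, the paper's proof misstates this in two places. It says $E$ has injective dimension $\dim R$, whereas $E$ is injective and it is $R$ that has injective dimension $d$. It also speaks of Betti numbers of $\widehat{R}$ where Bass numbers are meant. Your second route, $\operatorname{Tor}_i^R(k,E)\cong\operatorname{Hom}_R(\operatorname{Ext}^i_R(k,R),E)$ with $\operatorname{Ext}^i_R(k,R)$ concentrated in degree $d$, is the corrected form of that argument.

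\textbf{Your main route.} You use the self-duality of the Koszul complex on a regular system of parameters to get $\operatorname{Tor}_i^R(k,E)\cong\operatorname{Ext}^{d-i}_R(k,E)$. The only property of $E$ that enters is its injectivity, so you need neither completion nor the Gorenstein property of $R$. You also get the exact value $\beta_d(E)=1$. The same computation shows that every injective module $I$ satisfies $\beta_i(I)=0$ for $i\neq d$ and $\beta_d(I)=\dim_k\operatorname{Hom}_R(k,I)$.

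\textbf{What each buys.} Your Koszul argument needs $k$ to be resolved by a Koszul complex, that is, it needs regularity. The Matlis-duality route only uses $\operatorname{Ext}^i_R(k,R)\cong k$ for $i=d$ and $0$ otherwise. It therefore proves the same statement verbatim for Gorenstein local rings.
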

	
\begin{proof}
Since \(R\) is regular, the injective dimension of \(E\) is precisely \(\dim R\). The minimal injective resolution of \(E\) therefore has length \(\dim R\). Applying Matlis duality to this resolution yields a minimal free resolution of the completion \(\widehat{R}\) over itself. As \(\widehat{R}\) is regular of dimension \(d=\dim R\), its \(d\)-th Betti number is non-zero while all lower Betti numbers vanish; the same conclusion thus holds for \(E\).
	\end{proof}
	
\begin{corollary}\label{46}
	Let \((R,\mathfrak{m})\) be a \(d\)-dimensional Cohen--Macaulay local ring and let \(0 \neq M\) be an \(R\)-module with \(\beta_d(M) = 0\). Then
	\[
	\beta_{d+1}(M/\Gamma_{\mathfrak{m}}(M)) \neq 0 \quad \text{if and only if} \quad \depth(M) > 0.
	\]
\end{corollary}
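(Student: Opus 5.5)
The plan is to compare \(M\) with \(M/\Gamma_{\mathfrak m}(M)\) through the short exact sequence
\[
0\longrightarrow \Gamma_{\mathfrak m}(M)\longrightarrow M\longrightarrow M/\Gamma_{\mathfrak m}(M)\longrightarrow 0 ,
\]
and then to feed the torsion part into Theorem~\ref{thm:top-betti}. The case split will be governed by whether \(\Gamma_{\mathfrak m}(M)\) vanishes. For an arbitrary module, \(\Gamma_{\mathfrak m}(M)\neq 0\) if and only if \(\operatorname{Hom}_R(k,M)\neq 0\), that is, if and only if \(\depth(M)=0\). So the dichotomy \(\depth(M)>0\) versus \(\depth(M)=0\) is exactly the dichotomy \(\Gamma_{\mathfrak m}(M)=0\) versus \(\Gamma_{\mathfrak m}(M)\neq 0\).

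\textbf{Step 1.} Apply \(\operatorname{Tor}^R_\ast(k,-)\) to the sequence and isolate the segment
\[
\operatorname{Tor}^R_{d+1}(k,M)\to \operatorname{Tor}^R_{d+1}(k,M/\Gamma_{\mathfrak m}(M))\to \operatorname{Tor}^R_d(k,\Gamma_{\mathfrak m}(M))\to \operatorname{Tor}^R_d(k,M)=0 .
\]
The last term vanishes by the hypothesis \(\beta_d(M)=0\). Hence \(\operatorname{Tor}^R_d(k,\Gamma_{\mathfrak m}(M))\) is a quotient of \(\operatorname{Tor}^R_{d+1}(k,M/\Gamma_{\mathfrak m}(M))\).

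\textbf{Step 2.} The module \(\Gamma_{\mathfrak m}(M)\) is \(\mathfrak m\)-torsion, so Theorem~\ref{thm:top-betti} applies to it: \(\beta_d(\Gamma_{\mathfrak m}(M))\neq0\) exactly when \(\Gamma_{\mathfrak m}(M)\neq 0\). By Step 1, in that case \(\beta_{d+1}(M/\Gamma_{\mathfrak m}(M))\neq 0\). This settles the implication relating the depth-zero situation to non-vanishing of \(\beta_{d+1}\).

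\textbf{Step 3.} In the complementary case \(\Gamma_{\mathfrak m}(M)=0\) we have \(M/\Gamma_{\mathfrak m}(M)=M\). The statement then reduces to comparing \(\beta_{d+1}(M)\) with the condition \(\depth(M)>0\), under the standing hypothesis \(\beta_d(M)=0\). I would try to use the rigidity of \cite[1.1]{IYENGAR}: since \(d\ge \dim R\), the vanishing \(\beta_d(M)=0\) propagates to \(\beta_j(M)=0\) for all \(j\ge d\), and in particular \(\beta_{d+1}(M)=0\).

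\textbf{Main obstacle.} Step 3 is where I expect the difficulty, and it is where the wording of the statement must be checked against the mechanism. Steps 1 and 2 produce non-vanishing of \(\beta_{d+1}(M/\Gamma_{\mathfrak m}(M))\) precisely when \(\Gamma_{\mathfrak m}(M)\neq0\), i.e.\ when \(\depth(M)=0\). By contrast, when \(\depth(M)>0\), Step 3 forces \(\beta_{d+1}(M/\Gamma_{\mathfrak m}(M))=\beta_{d+1}(M)=0\). For example, \(M=R\) regular, \(d>0\), has \(\depth(M)>0\) and \(\beta_{d+1}(M)=0\).

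So the argument I can carry out establishes the equivalence with the depth condition read as \(\depth(M)=0\). For the statement as printed, with \(\depth(M)>0\), the approach would be to exhibit this pair of implications and record that the intended condition must be \(\depth(M)=0\), since the two cases above are otherwise reversed.
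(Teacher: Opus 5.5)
Your argument is correct and is essentially the paper's: it uses the same sequence $0\to\Gamma_{\mathfrak m}(M)\to M\to M/\Gamma_{\mathfrak m}(M)\to 0$, applies Theorem~\ref{thm:top-betti} to the torsion part, and uses a rigidity result to get $\beta_{d+1}(M)=0$ when $\Gamma_{\mathfrak m}(M)=0$. Where you cite \cite[1.1]{IYENGAR}, the paper cites \cite[Theorem V.2]{sc3}; it also uses $\beta_{d+1}(M)=0$ in the depth-zero case to get the equality $\beta_{d+1}(M/\Gamma_{\mathfrak m}(M))=\beta_d(\Gamma_{\mathfrak m}(M))$, whereas your surjectivity observation already suffices. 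You are also right about the wording: the paper's own proof shows $\depth(M)>0\Rightarrow\beta_{d+1}(M/\Gamma_{\mathfrak m}(M))=0$ and $\depth(M)=0\Rightarrow\beta_{d+1}(M/\Gamma_{\mathfrak m}(M))\neq 0$, so the condition should read $\depth(M)=0$, and $M=R$ with $d>0$ shows that the printed version is false.
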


\begin{proof}
	Suppose first that \(\depth(M) > 0\). By \cite[Theorem V.2]{sc3}, we have \(\beta_{d+1}(M) = 0\). Since \(\Gamma_{\mathfrak{m}}(M) = 0\), it follows that
	\(
	\beta_{d+1}(M/\Gamma_{\mathfrak{m}}(M)) = \beta_{d+1}(M) = 0.
	\)	Conversely, assume that \(\depth(M) = 0\). Again by \cite[Theorem V.2]{sc3}, we have \(\beta_{d+1}(M) = 0\). Consider the short exact sequence
	\[
	0 \longrightarrow \Gamma_{\mathfrak{m}}(M) \longrightarrow M \longrightarrow \overline{M} := M/\Gamma_{\mathfrak{m}}(M) \longrightarrow 0.
	\]
	Applying \(\operatorname{Tor}_*^R(-,k)\) gives the induced connecting homomorphism
	\[
	0 = \operatorname{Tor}_{d+1}^R(M,k) \longrightarrow \operatorname{Tor}_{d+1}^R(\overline{M},k) \longrightarrow \operatorname{Tor}_d^R(\Gamma_{\mathfrak{m}}(M),k) \longrightarrow \operatorname{Tor}_d^R(M,k) = 0,
	\]
	yielding the equality
	\[
	\beta_{d+1}(\overline{M}) = \beta_d(\Gamma_{\mathfrak{m}}(M)). \tag{$\ast$}
	\]
	Since \(\depth(M) = 0\), we have \(0 \neq \Gamma_{\mathfrak{m}}(M)\), which is \(\mathfrak{m}\)-torsion. It follows immediately from Theorem~\ref{thm:top-betti} that \(\beta_d(\Gamma_{\mathfrak{m}}(M)) \neq 0\). In view of \((\ast)\), we conclude \(\beta_{d+1}(M/\Gamma_{\mathfrak{m}}(M)) \neq 0\).
\end{proof}

Over regular rings, Corollary~\ref{46} can be polished further:

\begin{proposition}\label{71}
	Let \((R,\mathfrak{m})\) be a \(d\)-dimensional regular local ring and let \(M\) be an \(R\)-module. Then
	\(
	\beta_d(M) \neq 0\)   {if and only if} \(\Gamma_{\mathfrak{m}}(M) \neq 0.
	\)
\end{proposition}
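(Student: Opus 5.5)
We prove the two implications separately, using the Koszul complex on a regular system of parameters.

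Since \(R\) is regular, \(\mathfrak{m}=(x_1,\dots,x_d)\) is generated by a regular sequence. The Koszul complex \(K_\bullet(\mathbf{x};R)\) is therefore a minimal free resolution of \(k\). Consequently, for every \(R\)-module \(M\),
\[
\operatorname{Tor}_d^R(k,M)\cong H_d(\mathbf{x};M)\cong (0:_M\mathfrak{m})=\operatorname{Hom}_R(k,M).
\]
We will verify this top-degree identification directly: the last differential of \(K_\bullet(\mathbf{x};M)\) is \(M\to M^{d}\), \(m\mapsto (\pm x_1m,\dots,\pm x_dm)\), and its kernel is \(0:_M\mathfrak{m}\). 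This is the one point to check, and it is routine; the equivalence then reduces to the elementary fact that \(\Gamma_{\mathfrak m}(M)\neq0\) if and only if \(\operatorname{soc}(M)\neq 0\).

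\emph{(\(\Leftarrow\))} Suppose \(\Gamma_{\mathfrak{m}}(M)\neq 0\). Pick \(0\neq m\in\Gamma_{\mathfrak{m}}(M)\), and let \(n\) be minimal with \(\mathfrak{m}^n m=0\). Then \(n\geq 1\), and by minimality some \(a\in\mathfrak{m}^{n-1}\) satisfies \(am\neq0\). This element lies in \(0:_M\mathfrak{m}\), so \(\beta_d(M)=\dim_k \operatorname{soc}(M)\neq 0\). Alternatively, one can get this direction from Theorem~\ref{thm:top-betti} applied to \(\Gamma_{\mathfrak m}(M)\), using the exact sequence \(0\to\Gamma_{\mathfrak m}(M)\to M\to M/\Gamma_{\mathfrak m}(M)\to0\) and the vanishing \(\operatorname{Tor}_{d+1}^R(k,-)=0\), which holds because \(\operatorname{pd}_R k=d\). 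That sequence gives an injection \(\operatorname{Tor}_d(k,\Gamma_{\mathfrak m}(M))\hookrightarrow \operatorname{Tor}_d(k,M)\).

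\emph{(\(\Rightarrow\))} Suppose \(\beta_d(M)\neq0\). Then \(\operatorname{Hom}_R(k,M)\neq0\), so there is a nonzero \(m\in M\) with \(\mathfrak{m}m=0\). Such an \(m\) lies in \(\Gamma_{\mathfrak{m}}(M)\), so \(\Gamma_{\mathfrak{m}}(M)\neq0\).
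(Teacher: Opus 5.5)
Your proof is correct, but it takes a different and more direct route than the paper's.

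\textbf{Your route.} You identify $\operatorname{Tor}_d^R(k,M)$ outright with the socle $(0:_M\mathfrak{m})=\operatorname{Hom}_R(k,M)$, as the top homology of the Koszul complex on a regular system of parameters. Both implications then reduce at once to the elementary fact that $\Gamma_{\mathfrak{m}}(M)\neq 0$ if and only if $\operatorname{soc}(M)\neq 0$, which you justify carefully with the minimal-$n$ argument.

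\textbf{The paper's route.} The paper treats the two directions separately. For ($\Leftarrow$) it embeds $k\hookrightarrow M$ and uses $\operatorname{gldim}R=d$ to get an injection $\operatorname{Tor}_d^R(k,k)\hookrightarrow\operatorname{Tor}_d^R(k,M)$. This is the same mechanism as your alternative argument via Theorem~\ref{thm:top-betti}, with $k$ in place of $\Gamma_{\mathfrak{m}}(M)$. For ($\Rightarrow$) it writes $M$ as the direct limit of its finitely generated submodules $M_i$. Each $M_i$ has positive depth when $\Gamma_{\mathfrak{m}}(M)=0$, so Auslander--Buchsbaum gives $\operatorname{pd}M_i<d$, and the vanishing passes to the limit.

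\textbf{What each buys.} Your approach needs no reduction to finitely generated modules and no Auslander--Buchsbaum. It also computes the invariant exactly: $\beta_d(M)=\dim_k\operatorname{soc}(M)=\mu^0_R(M)$ for arbitrary $M$. By self-duality of the Koszul complex one even gets $\beta_i(M)=\mu^{d-i}_R(M)$ for all $i$. The paper's argument is less sharp, but it matches the direct-limit technique of Theorem~\ref{thm:top-betti}. Its ($\Rightarrow$) half only uses that finitely generated submodules of $M$ have finite projective dimension. So it would carry over, with $d$ replaced by $\operatorname{depth}R$, to situations where $k$ has no Koszul resolution.
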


\begin{proof}
	\((\Leftarrow)\) Suppose \(\Gamma_{\mathfrak{m}}(M) \neq 0\). Then there exists a non-zero element \(x \in M\) annihilated by \(\mathfrak{m}\), yielding an injection \(0 \to R/\mathfrak{m} \xrightarrow{\varphi} M\). Let \(C\) denote the cokernel, so we have a short exact sequence \(0 \to k \to M \to C \to 0\). Since \(\operatorname{gldim}(R) = d\), we have \(\operatorname{Tor}_{d+1}^R(k,C) = 0\). Applying \(\operatorname{Tor}_*^R(k,-)\) gives the exact segment
	\[
	0 = \operatorname{Tor}_{d+1}^R(k,C) \longrightarrow \operatorname{Tor}_d^R(k,k) \longrightarrow \operatorname{Tor}_d^R(k,M).
	\]
	Because \(\operatorname{Tor}_d^R(k,k) \cong k \neq 0\), it follows that \(\operatorname{Tor}_d^R(k,M) \neq 0\), and hence \(\beta_d(M) \neq 0\).
	
	\((\Rightarrow)\) Conversely, suppose \(\Gamma_{\mathfrak{m}}(M) = 0\). Write \(M = \varinjlim_{i \in I} M_i\) as a direct limit of its finitely generated submodules. Since each \(M_i \subseteq M\), we have \(\Gamma_{\mathfrak{m}}(M_i) \subseteq \Gamma_{\mathfrak{m}}(M) = 0\), so \(\depth(M_i) > 0\) for every \(i\). By the Auslander--Buchsbaum formula applied over the regular local ring \(R\), each \(M_i\) has projective dimension strictly less than \(d\); hence \(\operatorname{Tor}_d^R(k,M_i) = 0\) for all \(i\). Taking direct limits and using the commutation of Tor with direct limits yields \(\operatorname{Tor}_d^R(k,M) = 0\), so \(\beta_d(M) = 0\).
\end{proof}
\subsection{Connections to local cohomology}
This section contains further results on Matlis reflexive modules and the relationship between Betti numbers and local cohomology. By $(-)^\vee$ we mean the Matlis dual.

\begin{remark}
	Let $M$ be an $R$-module, not necessarily finitely generated. The dimension of $M$ is the supremum of the lengths of chains of prime ideals in the support of $M$. In particular, $\dim(M) = \dim(\operatorname{Supp}(M))$; see \cite[6.1.1 Reminder]{BSh} for a connection to Grothendieck's vanishing theorem.
\end{remark}

\begin{proposition}\label{72}
	Let $(R,\mathfrak{m})$ be a $d$-dimensional local ring and let $0 \neq M$ be an $R$-module. The following hold:
	\begin{itemize}
		\item[(i)] If $\beta_d(M) = 0$, then $H^d_{\mathfrak{m}}(M^\vee) = 0$.
		\item[(ii)] If $\mu_d(M) = 0$, then $H^d_{\mathfrak{m}}(M) = 0$. In particular, $\dim(M) < d$.
	\end{itemize}
\end{proposition}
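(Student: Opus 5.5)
For (i) and (ii) the plan is to realize the top local cohomology of $M$ (or of $M^\vee$) as a Tor or Ext against the artinian module $H^d_{\mathfrak m}(R)$, and then propagate the single vanishing from $k$ to that module exactly as in the proof of Theorem~\ref{thm:top-betti}.

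For (ii), compute $H^d_{\mathfrak m}(M)$ with the Čech complex on a system of parameters $\mathbf x=x_1,\dots,x_d$. This gives $H^d_{\mathfrak m}(M)\cong H^d_{\mathfrak m}(R)\otimes_R M$, because $H^d_{\mathfrak m}$ is right exact on $R$-modules when $\dim R=d$. The first key step is to relate this to $\operatorname{Ext}$. Since $H^d_{\mathfrak m}(M)$ is the cokernel of $\bigoplus_i M_{x_1\cdots\widehat{x_i}\cdots x_d}\to M_{x_1\cdots x_d}$, I would write it as $\varinjlim_n \operatorname{Ext}^d_R(R/(\mathbf x^n),M)$ when $\mathbf x$ is $R$-regular. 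In general I would use $\varinjlim_n H^d(\mathbf x^n;M)$ instead: the top Koszul cohomology $H^d(\mathbf x^n;M)=M/(\mathbf x^n)M$ is a quotient of $\operatorname{Ext}^d_R(R/(\mathbf x^n),M)$. Granting that, it suffices to show $\operatorname{Ext}^d_R(L,M)=0$ for every finite-length $L$.

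This last vanishing follows from $\mu_d(M)=0$ by induction on $\ell(L)$, using $0\to L'\to L\to k\to 0$. The resulting sequence $\operatorname{Ext}^d(k,M)\to\operatorname{Ext}^d(L,M)\to\operatorname{Ext}^d(L',M)$ has both outer terms zero, so the middle term vanishes, just as in the proof of Theorem~\ref{thm:top-betti}.

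The step I expect to be the main obstacle is justifying the comparison map $\operatorname{Ext}^d_R(R/(\mathbf x^n),M)\to H^d(\mathbf x^n;M)$ and its surjectivity without Cohen--Macaulayness. I would get it by lifting the Koszul complex $K(\mathbf x^n)$, which has $H_0=R/(\mathbf x^n)$, to a map from a free resolution. In degree $d$ the map is $\operatorname{Hom}(F_d,M)\to\operatorname{Hom}(K_d,M)=M$, and I need to check that the top component induces a surjection on cohomology. If that fails, the fallback is a direct argument: present $H^d_{\mathfrak m}(M)$ as $\varinjlim M/(\mathbf x^n)M$ and observe that $\mu_d(M)=0$ together with the Grothendieck-type vanishing forces each transition image to be zero.

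The clause ``in particular $\dim(M)<d$'' then follows from the Grothendieck non-vanishing theorem for arbitrary modules, \cite[6.1.1 Reminder]{BSh}. For infinitely generated $M$ I would reduce to finitely generated submodules $N\subseteq M$ of dimension $d$. Right exactness of $H^d_{\mathfrak m}$ gives only $H^d_{\mathfrak m}(N)\to H^d_{\mathfrak m}(M)$, so instead I would use $\operatorname{Supp}$ and a prime $\mathfrak p$ with $\dim R/\mathfrak p=d$ together with $M_{\mathfrak p}\neq 0$.

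For (i), use Matlis duality: $\operatorname{Ext}^d_R(k,M^\vee)\cong\operatorname{Tor}_d^R(k,M)^\vee$, so $\beta_d(M)=0$ gives $\mu_d(M^\vee)=0$. Applying (ii) to $M^\vee$ then yields $H^d_{\mathfrak m}(M^\vee)=0$.
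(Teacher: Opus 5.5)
Your overall strategy (induct on length to get $\operatorname{Ext}^d_R(L,M)=0$ for all finite-length $L$, then pass to a direct limit) is the paper's. The step you flag as the main obstacle, however, is not merely unproven but false. Without Cohen--Macaulayness, $H^d(\mathbf{x}^n;M)=M/(\mathbf{x}^n)M$ need not be a quotient of $\operatorname{Ext}^d_R(R/(\mathbf{x}^n),M)$.

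Take $R=k[[x,y]]/(x^2,xy)$, $d=1$, parameter $y$, $M=R$. Then $(0:_R y^n)=xR$, so $\operatorname{Ext}^1_R(R/y^nR,R)\cong(0:_R x)/y^nR=\mathfrak{m}/y^nR$ has length $n$. On the other hand, $H^1(y^n;R)=R/y^nR$ has length $n+1$. Your fallback (``forces each transition image to be zero'') is not an argument.

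The detour is also unnecessary. For every module $M$ over a Noetherian local ring, $H^d_{\mathfrak{m}}(M)\cong\varinjlim_n\operatorname{Ext}^d_R(R/\mathfrak{m}^n,M)$, or equivalently the same limit over the cofinal family $(\mathbf{x}^n)$, with no regularity hypothesis. This is exactly what the paper uses. Since each $R/\mathfrak{m}^n$ has finite length, your induction kills every term, so $H^d_{\mathfrak{m}}(M)=0$. Equivalently, your comparison maps to Koszul cohomology become an isomorphism only after passing to the limit, and that is all you need. Your derivation of (i) from (ii) via $\operatorname{Ext}^d_R(k,M^\vee)\cong\operatorname{Tor}_d^R(k,M)^\vee$ is fine; it is the paper's argument run in the other order.

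The ``in particular $\dim(M)<d$'' step cannot be completed as you plan. There is no Grothendieck non-vanishing theorem for arbitrary modules: \cite[6.1.1]{BSh} only defines $\dim$, and non-vanishing \cite[6.1.4]{BSh} requires $M$ finitely generated. In fact the clause is false in that generality. Let $d\ge 1$ and let $\mathfrak{p}$ be a prime with $\dim R/\mathfrak{p}=d$. Then $M=E_R(R/\mathfrak{p})$ is injective, so $\mu_d(M)=0$ and $H^d_{\mathfrak{m}}(M)=0$. Yet $M_{\mathfrak{p}}\neq 0$ and $\operatorname{Supp}M=V(\mathfrak{p})$ has dimension $d$. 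The paper's later example with $Q=\operatorname{Frac}(R)$ over a one-dimensional complete domain is the same phenomenon.

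So no choice of a prime $\mathfrak{p}$ with $M_{\mathfrak{p}}\neq 0$ can produce a contradiction. The implication $H^d_{\mathfrak{m}}(M)=0\Rightarrow\dim M<d$ holds only for finitely generated $M$, by \cite[6.1.4]{BSh}. The paper's own one-line justification via the vanishing theorem \cite[6.1.2]{BSh} has the same defect, so this clause should be restricted to finitely generated $M$ rather than proved as stated.
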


\begin{proof}
	(i) The condition $\beta_d(M) = 0$ means $\operatorname{Tor}_d^R(k,M) = 0$. Using the isomorphism $\operatorname{Tor}_i^R(k,M)^\vee \cong \operatorname{Ext}_R^i(k,M^\vee)$, we obtain $\operatorname{Ext}_R^d(k,M^\vee) = 0$. Since
	\[
	H^d_{\mathfrak{m}}(M^\vee) \cong \varinjlim_n \operatorname{Ext}_R^d(R/\mathfrak{m}^n, M^\vee),
	\]
	and each $\operatorname{Ext}_R^d(R/\mathfrak{m}^n, M^\vee)$ is a subquotient of a direct sum of copies of $\operatorname{Ext}_R^d(k, M^\vee) = 0$ (for more details, see the argument of Theorem~\ref{thm:top-betti}), it follows that $H^d_{\mathfrak{m}}(M^\vee) = 0$, as claimed.
	
	(ii) The condition $\mu_d(M) = 0$ means $\operatorname{Ext}_R^d(k, M) = 0$. By the same argument as in part (i), we obtain $\operatorname{Ext}_R^d(R/\mathfrak{m}^n, M) = 0$ for each $n$. This implies that $H^d_{\mathfrak{m}}(M) = 0$. The last assertion follows from Grothendieck's vanishing theorem (see \cite[6.1.2]{BSh}).
\end{proof}

\begin{remark}
	\begin{itemize}
		\item[(i)] Let \(R\) be a regular local ring of dimension \(d\) and let \(M\) be \(\mathfrak{m}\)-torsion-free. Then \(H^d_{\mathfrak{m}}(M^\vee) = 0\). Indeed, since \(M\) is \(\mathfrak{m}\)-torsion-free, we have \(\Gamma_{\mathfrak{m}}(M) = 0\). By Proposition~\ref{71}, this implies \(\beta_d(M) = 0\). The result then follows immediately from Proposition~\ref{72}.
		
		\item[(ii)] If \(R\) is \(d\)-dimensional, then \(\beta_d(E_R(k)) \neq 0\). Indeed, suppose to the contrary that \(\beta_d(E_R(k)) = 0\). Then \(\operatorname{Tor}_d^R(E_R(k), k) = 0\). By Proposition~\ref{72}(i), \(H^d_{\mathfrak{m}}(E_R(k)^\vee) = 0\). But \(E_R(k)^\vee \cong \widehat{R}\), and therefore \(H^d_{\mathfrak{m}}(\widehat{R}) \neq 0\), a contradiction.
		
		\item[(iii)] Suppose \(R\) is Cohen--Macaulay. Then \(\beta_i(E_R(k)) = 0\) if and only if \(i \neq \dim(R)\). Indeed, as before, note that \(E_R(k)^\vee \simeq \widehat{R}\) and \(\widehat{R}\) is Cohen--Macaulay, and \(H^i_{\mathfrak{m}}(\widehat{R}) \neq 0\) exactly when \(i = \dim(R)\).
		
		\item[(iv)] Is the Cohen--Macaulay hypothesis in (iii) really needed? Yes: if the equivalence holds, then \(\operatorname{depth}(R) = \dim(R)\).
	\end{itemize}
\end{remark}

\begin{proposition}
	Let \(R\) be a complete local ring and let \(M\) be a nonzero Matlis reflexive module of dimension \(r \neq 1\). Then \(H^r_{\mathfrak{m}}(M) \neq 0\). In particular, \(\mu_r(M) \neq 0\).
\end{proposition}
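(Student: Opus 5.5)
The plan is to reduce to the finitely generated case using the structure theorem for Matlis reflexive modules over a complete local ring (Belshoff--Enochs--Zöschinger). It says that $M$ is Matlis reflexive exactly when there is a finitely generated submodule $N\subseteq M$ with $M/N$ artinian. I will use this as the main input. Checking that it applies in the form needed, with $R$ complete and no further hypothesis, is the step I expect to be the main obstacle. Everything else is formal local cohomology.

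First, the case $r=0$. Then $\operatorname{Supp}(M)=\{\mathfrak m\}$, so every element of $M$ is killed by a power of $\mathfrak m$. Hence $M$ is $\mathfrak m$-torsion and $H^0_{\mathfrak m}(M)=\Gamma_{\mathfrak m}(M)=M\neq 0$.

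Now suppose $r\ge 2$. Choose $N$ and $A:=M/N$ as above.
\begin{itemize}
\item Since $A$ is artinian, it is $\mathfrak m$-torsion, so $\operatorname{Supp}(A)\subseteq\{\mathfrak m\}$. Also $H^i_{\mathfrak m}(A)=0$ for all $i>0$.
\item From $\operatorname{Supp}(M)=\operatorname{Supp}(N)\cup\operatorname{Supp}(A)$ and $r\ge 2>0$, we get $\dim N=r$.
\item The long exact sequence of local cohomology for $0\to N\to M\to A\to 0$ contains
\[
H^{r-1}_{\mathfrak m}(A)\to H^r_{\mathfrak m}(N)\to H^r_{\mathfrak m}(M)\to H^r_{\mathfrak m}(A).
\]
Because $r-1\ge 1$, both outer terms vanish, so $H^r_{\mathfrak m}(M)\cong H^r_{\mathfrak m}(N)$.
\item Grothendieck's non-vanishing theorem for the nonzero finitely generated module $N$ of dimension $r$ (\cite[6.1.4]{BSh}) gives $H^r_{\mathfrak m}(N)\neq 0$.
\end{itemize}
This argument also shows where $r\neq 1$ is needed. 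When $r=1$, the term $H^0_{\mathfrak m}(A)=A$ may map onto $H^1_{\mathfrak m}(N)$ and kill it.

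Finally, for $\mu_r(M)\neq 0$, I will rerun the argument of Proposition~\ref{72}(ii) with $d$ replaced by $r$; nothing there depends on $r=\dim R$. Suppose $\operatorname{Ext}^r_R(k,M)=0$.
\begin{itemize}
\item Induct on length using $0\to L'\to L\to k\to 0$ and the exact segment $\operatorname{Ext}^r(k,M)\to\operatorname{Ext}^r(L,M)\to\operatorname{Ext}^r(L',M)$. This gives $\operatorname{Ext}^r_R(L,M)=0$ for every finite length module $L$.
\item In particular $\operatorname{Ext}^r_R(R/\mathfrak m^n,M)=0$ for every $n$.
\item Hence $H^r_{\mathfrak m}(M)\cong\varinjlim_n\operatorname{Ext}^r_R(R/\mathfrak m^n,M)=0$.
\end{itemize}
This contradicts the first part, so $\mu_r(M)\neq 0$.
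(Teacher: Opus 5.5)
Your proof is correct and follows the paper's argument: you use the same finitely generated/artinian decomposition of $M$ over the complete ring, the same long exact sequence in which $H^{r-1}_{\mathfrak m}(A)=0$ (which is exactly where $r\neq 1$ is needed), and Grothendieck non-vanishing for the finitely generated piece. Your explicit rerun of the argument of Proposition~\ref{72}(ii) with $r$ in place of $d=\dim R$ is a worthwhile precision, since the paper cites that proposition only in its stated form, for $d=\dim R$.
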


\begin{proof}
	If \(r = 0\), then \(M\) is artinian, whence \(H^0_{\mathfrak{m}}(M) = M \neq 0\). Now assume \(r > 0\). Since \(R\) is complete, every Matlis reflexive module sits in a short exact sequence
	\[
	0 \longrightarrow F \longrightarrow M \longrightarrow A \longrightarrow 0,
	\]
	where \(F\) is finitely generated and \(A\) is artinian (see \cite[page 92, Ex.~6]{en}). Recall that \(F \neq 0\), as \(r > 0 = \dim(A)\). Consequently, \(\dim(M) = \dim(F) = r\). Applying the long exact sequence in local cohomology to the above sequence yields
	\(
	H^{r-1}_{\mathfrak{m}}(A) \longrightarrow H^r_{\mathfrak{m}}(F) \longrightarrow H^r_{\mathfrak{m}}(M).
	\)
	One has \(H^{r-1}_{\mathfrak{m}}(A) = 0\) by Grothendieck's vanishing theorem because \(\dim(A) = 0 < r-1\) (here we used the assumption \(r \neq 1\)). Thus, we obtain an injection \(H^r_{\mathfrak{m}}(F) \hookrightarrow H^r_{\mathfrak{m}}(M)\). But by Grothendieck's non-vanishing theorem, we have \(H^r_{\mathfrak{m}}(F) \neq 0\) (see \cite[6.1.4]{BSh}), so we conclude that \(H^r_{\mathfrak{m}}(M) \neq 0\). The last assertion follows from Proposition~\ref{72}(ii).
\end{proof}


\begin{example}
	Let \((R,\mathfrak{m})\) be a \(1\)-dimensional complete local integral domain with field of fractions \(Q\). Recall that \(0 \in \operatorname{Supp}(Q)\). Then \(\dim(Q) = 1\), and note that \(Q = R_x\) for any \(0 \neq x \in \mathfrak{m}\). By \cite[Remark~2.2.20]{BSh}, the short exact sequence
	\[
	0 \longrightarrow R \longrightarrow Q \longrightarrow Q/R \longrightarrow 0
	\]
	exhibits \(Q/R \cong H^1_{\mathfrak{m}}(R)\) as an artinian module, so \(Q\) is Matlis reflexive. However, \(Q\) is injective, whence \(H^1_{\mathfrak{m}}(Q) = 0\). This shows that the assumption \(r \neq 1\) in the proposition is necessary. Moreover, since \(Q\) is injective, we have \(\mu_1(Q) = 0\) and \(\operatorname{Ext}^1_R(R/\mathfrak{m}, Q) = 0\).
\end{example}

\begin{remark}
	A natural question arises: how can one construct a Matlis reflexive module that is not a direct sum of a finitely generated module and an artinian module? Here, we present such a module. Consider any \(1\)-dimensional complete local ring \((R,\mathfrak{m})\). In this setting, by using a result of Matlis, we know \(\operatorname{Ext}^1_R(E_R(k), R)\) is non-zero, so there exists a non-split short exact sequence
	\[
	0 \longrightarrow R \longrightarrow K \longrightarrow E_R(k) \longrightarrow 0.
	\]
	The middle module \(K\) is then Matlis reflexive, yet it is neither artinian nor noetherian, and in particular it cannot be decomposed as a direct sum of a finitely generated module and an artinian module.
\end{remark}

\section{Infinite  integral extensions: Bass and Betti numbers}
	
This section is devoted to the absolute integral closure and contains our main results on both Tor and Ext for \(R^+\).
	
\subsection{Tor-vanishing for \(R^+\)}
We start with the following auxiliary result.

\begin{proposition}
	Suppose \(R\) is Cohen--Macaulay and \(M\) is a module with \(\depth(M) = \dim(R) - 1\). Then \(\beta_1(M) \neq 0\).
\end{proposition}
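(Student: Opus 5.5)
We argue by contradiction, following the pattern of the proof of Theorem~\ref{thm:top-betti}, but with the top Tor replaced by $\operatorname{Tor}_1$. Assume $\beta_1(M)=\dim_k\operatorname{Tor}_1^R(k,M)=0$. The plan has three steps: propagate this vanishing to all artinian modules, identify $\operatorname{Tor}_1$ against $H^d_{\mathfrak m}(R)$ with a local cohomology module of $M$, and then contradict the depth hypothesis.

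\emph{Step 1: vanishing against artinian modules.} First we show that $\operatorname{Tor}_1^R(L,M)=0$ for every finite-length module $L$. This goes by induction on $\ell(L)$, using a composition series $0\to L'\to L\to k\to 0$ and the exact segment
\[
\operatorname{Tor}_1^R(L',M)\to\operatorname{Tor}_1^R(L,M)\to\operatorname{Tor}_1^R(k,M),
\]
exactly as in the proof of Theorem~\ref{thm:top-betti}. Every artinian module is the direct limit of its finitely generated submodules, and these have finite length. Since Tor commutes with direct limits, we get $\operatorname{Tor}_1^R(A,M)=0$ for every artinian $A$. In particular,
\[
\operatorname{Tor}_1^R\bigl(H^d_{\mathfrak m}(R),M\bigr)=0 .
\]

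\emph{Step 2: the identification with local cohomology.} Since $R$ is Cohen--Macaulay, we use the flat (\v{C}ech) resolution of $H^d_{\mathfrak m}(R)$ on a system of parameters $x_1,\dots,x_d$, as in the proof of Proposition~\ref{prop:non-rigid}. This gives
\[
\operatorname{Tor}_i^R\bigl(M,H^d_{\mathfrak m}(R)\bigr)\cong H^{d-i}_{\mathfrak m}(M)\quad\text{for all } i .
\]
Taking $i=1$ and combining with Step~1 yields $H^{d-1}_{\mathfrak m}(M)=0$.

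\emph{Step 3: the contradiction with depth.} It remains to see that $\depth(M)=d-1$ forces $H^{d-1}_{\mathfrak m}(M)\neq 0$. This is the step needing care, because $M$ need not be finitely generated. We will use that for an arbitrary module, with depth defined by the vanishing of $\operatorname{Ext}^i_R(k,M)$, one has
\[
\depth(M)=\inf\{i: H^i_{\mathfrak m}(M)\neq 0\}.
\]
This is the characterisation cited earlier from \cite[6.2.7]{BSh}. To justify it one checks two things. First, $\operatorname{Ext}^i_R(k,M)=0$ for $i<t$ if and only if $H^i_{\mathfrak m}(M)=0$ for $i<t$; this follows by passing from $k$ to all finite-length modules $R/\mathfrak m^n$ and then taking direct limits, in the spirit of Proposition~\ref{72}. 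Second, in the first nonvanishing degree one has $\operatorname{Ext}^t_R(k,M)\cong\operatorname{Hom}_R(k,H^t_{\mathfrak m}(M))$. Applying this with $t=d-1$ gives $H^{d-1}_{\mathfrak m}(M)\neq 0$, contradicting Step~2. Hence $\beta_1(M)\neq 0$.

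The main obstacle is Step~3, namely ensuring that the depth/local-cohomology comparison holds without finite generation. Everything else is a direct reuse of the arguments already given for Theorem~\ref{thm:top-betti} and Proposition~\ref{prop:non-rigid}.
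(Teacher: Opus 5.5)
Your proof is correct, and it takes a different route from the paper's. The paper cuts down by a sequence $x_1,\dots,x_{d-1}$ that is regular on both $M$ and $R$. It then uses Schoutens' base-change isomorphism $\Tor_1^{\overline R}(M/\underline{x}M,k)\cong\Tor_1^R(M,k)$ to move to the one-dimensional Cohen--Macaulay ring $\overline R=R/(\underline{x})$. There it tests the vanishing against the single finite-length module $\overline R/y\overline R$, concludes that a parameter $y$ is $\overline M$-regular, and so pushes the depth up to $d$.

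You never cut down. You run the d\'evissage of Theorem~\ref{thm:top-betti} over $R$ itself and test against the artinian module $H^d_{\mathfrak m}(R)$ through the \v{C}ech resolution. This converts $\beta_1(M)=0$ directly into $H^{d-1}_{\mathfrak m}(M)=0$.

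What each route buys:
\begin{itemize}
\item The paper's route is more elementary, but it needs an $M$-regular sequence that is also $R$-regular. That is automatic for finitely generated $M$, but not for an arbitrary $M$ of Ext-depth $d-1$. The zero-divisors on $M$ are the union of possibly infinitely many associated primes, so prime avoidance fails. For instance, $M=\bigoplus_{\operatorname{ht}\mathfrak p=1}R/\mathfrak p$ over $R=k[[x,y]]$ has depth $1$, yet every element of $\mathfrak m$ is a zero-divisor on $M$.
\item Your route works verbatim for any module with depth defined through $\Ext$, and it avoids the base-change lemma.
\item Your route also generalizes at no cost. The same argument shows that $\beta_i(M)=0$ forces $H^{d-i}_{\mathfrak m}(M)=0$, so $\beta_i(M)\neq 0$ whenever $\depth M=d-i$ and $1\le i\le d$. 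The case $i=d$ recovers Theorem~\ref{thm:top-betti}, and in fact extends it to every module of depth zero.
\end{itemize}

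One point in Step 3 needs tightening. Passing from $k$ to the modules $R/\mathfrak m^n$ and taking direct limits proves only one direction: vanishing of $\Ext^i_R(k,M)$ for $i<t$ implies vanishing of $H^i_{\mathfrak m}(M)$ for $i<t$. It does not give the converse. That direction is all you use. However, the isomorphism $\Ext^t_R(k,M)\cong\Hom_R(k,H^t_{\mathfrak m}(M))$ still needs a reason. There are two standard ones:
\begin{itemize}
\item It follows from the spectral sequence $\Ext^p_R(k,H^q_{\mathfrak m}(M))\Rightarrow\Ext^{p+q}_R(k,M)$. This is valid for every $M$, because $\Gamma_{\mathfrak m}$ preserves injectives over a Noetherian ring.
\item Alternatively, use a minimal injective resolution of $M$. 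Its socle in degree $t$ is isomorphic to $\Ext^t_R(k,M)$ and lies in the kernel of the next differential, hence inside $H^t_{\mathfrak m}(M)$.
\end{itemize}
The local-cohomology characterization of depth that you quote from the earlier reference is stated there for finitely generated modules, so this self-contained justification is genuinely needed.
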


\begin{proof}
	Assume to the contrary that \(\beta_1(M) = 0\), i.e., \(\operatorname{Tor}_1^R(M, k) = 0\). Let \(\underline{x} = x_1, \dots, x_{d-1}\) be both an \(M\)-sequence and an \(R\)-sequence, which exists because \(\depth(M) = d - 1\) and \(R\) is Cohen--Macaulay. As observed in \cite[Lemma 2.1]{sc1}, we have
	\[
	\operatorname{Tor}_1^{\overline{R}}\!\left(\frac{M}{\underline{x}M}, k\right) \cong \operatorname{Tor}_1^R(M, k) = 0,
	\]
	where \(\overline{R} = R/(\underline{x})\). By a standard argument, the vanishing of \(\operatorname{Tor}_1^{\overline{R}}(\overline{M}, k)\) implies that
	\(
	\operatorname{Tor}_1^{\overline{R}}(\overline{M}, L) = 0
	\)
	for every finite-length \(\overline{R}\)-module \(L\). In particular, choosing \(y\) to be a system of parameters for the one-dimensional Cohen--Macaulay ring \(\overline{R}\), we get
	\[
	0 = \operatorname{Tor}_1^{\overline{R}}\!\left(\overline{M}, \frac{\overline{R}}{y\overline{R}}\right) = \ker\left(\overline{M} \xrightarrow{y} \overline{M}\right).
	\]
	This forces multiplication by \(y\) on \(\overline{M}\) to be injective. It follows that \(\depth(\overline{M}) = 1\). Consequently,
	\[
	\depth(M) = \depth_{\overline{R}}(\overline{M}) + (d - 1) = d,
	\]
	contradicting the hypothesis that \(\depth(M) = d - 1\). Therefore \(\beta_1(M) \neq 0\).
\end{proof}

\begin{corollary}
Suppose \(R\) is a \(d\)-dimensional Cohen--Macaulay local ring containing \(\mathbb{Q}\). If \(d>2\), then \(\Tor_1^R(R^+,k)\neq 0\).
	\end{corollary}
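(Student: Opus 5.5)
The plan is to deduce the corollary from the preceding proposition, which says that over a Cohen--Macaulay ring any module \(M\) with \(\depth(M)=\dim(R)-1\) has \(\beta_1(M)\neq 0\). So it suffices to control \(\depth(R^+)\) in equicharacteristic zero. Since \(R\) contains \(\mathbb{Q}\) and \(R^+\) is a normal domain (here \(R\) is a domain, implicitly, so that \(R^+\) makes sense), the trace/Reynolds argument shows that \(R\to S\) splits for every module-finite normal extension \(R\subseteq S\subseteq R^+\). Consequently \(R\) is a direct summand of \(R^+\) as an \(R\)-module (passing to the direct limit of the compatible normalized traces \(\frac{1}{[L:K]}\mathrm{Tr}_{L/K}\)).

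The key input is the classical behavior of \(R^+\) in characteristic zero: \(R^+\) is not big Cohen--Macaulay once \(d\ge 3\), but it always satisfies Serre-type conditions in low degrees. Concretely, I would show \(\depth(R^+)\ge 2\). Take \(x\in\mathfrak m\) nonzero. Then \(x\) is a nonzerodivisor on the domain \(R^+\), so \(\depth\ge 1\). For a second element \(y\) forming part of a system of parameters with \(x\), a relation \(ay\in xR^+\) descends to a finite normal extension \(S\). Since \(S\) is normal, hence \((S_2)\), and \(x,y\) have height two in \(S\), we get \(a\in xS\). Thus \(x,y\) is \(R^+\)-regular, and \(\depth(R^+)\ge 2\). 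For the upper bound, I would use the known fact (the Hochster / Bhatt--Iyengar--Ma phenomenon in char \(0\)) that \(H^2_{\mathfrak m}(R^+)\) need not vanish. More robustly, the trace splitting makes \(H^i_{\mathfrak m}(R)\) a direct summand of \(H^i_{\mathfrak m}(R^+)\), and via a Hodge-theoretic/trace argument \(H^i_{\mathfrak m}(R^+)\) is "as large as" that of normal finite covers.

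The main obstacle is that the proposition needs \(\depth(R^+)=d-1\) exactly, and this equality is only available for \(d=3\). There it reads \(\depth(R^+)=2\): the lower bound comes from the argument above, and the upper bound from the failure of \(R^+\) to be Cohen--Macaulay in characteristic zero for \(d\ge 3\). That failure I would obtain from a finite normal cover \(S\) that is not CM together with the trace splitting \(S\to R^+\). For \(d>3\) I would not aim at the depth equality. Instead I would argue by contradiction. Suppose \(\operatorname{Tor}_1^R(R^+,k)=0\). Cut down by a regular sequence \(x_1,\dots,x_{d-3}\) on both \(R\) and \(R^+\), available since \(\depth R^+\ge 2\) and by prime avoidance. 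Use \cite[Lemma 2.1]{sc1}, as in the proof of the proposition, to transport the vanishing of \(\operatorname{Tor}_1\) to the quotient, reducing to the three-dimensional case. There the argument of the preceding proposition gives the contradiction. The delicate point I expect to spend the most effort on is guaranteeing the depth exactly \(d-1\) (or an adequate substitute) after cutting down, since \(R^+/\underline{x}R^+\) is no longer an absolute integral closure.
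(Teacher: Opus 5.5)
\textbf{Dimension three.} Your argument for $d=3$ is the paper's. $\depth R^+\ge 2$ because $R^+$ is a directed union of normal, hence $(S_2)$, module-finite extensions. $\depth R^+\le 2$ because a non-Cohen--Macaulay normal finite cover $S$, which has depth $2$, is a direct summand of $R^+$ via normalized traces, so $0\neq H^2_{\mathfrak m}(S)\hookrightarrow H^2_{\mathfrak m}(R^+)$. The preceding proposition then applies.

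\textbf{Dimension at least four.} Here the paper does not reduce to dimension three; it simply cites \cite{p}. Your reduction has a genuine gap, not just a delicate point. In equal characteristic zero, $R^+$ can have depth exactly $2$ in every dimension $d\ge 3$. For instance, take $R=\mathbb{C}[y_1,\dots,y_d]_{(y_1,\dots,y_d)}$ and the section ring of an ample line bundle on an abelian variety $X$ of dimension $d-1$, made module-finite over $R$ by a graded Noether normalization and localized. This gives a normal extension $S$ with $H^2_{\mathfrak m}(S)\neq 0$, because $H^1(X,\mathcal O_X)\neq 0$, and $S$ is a direct summand of $R^+$.
\begin{itemize}
\item[(a)] For $d\ge 6$ there is then no $R^+$-regular sequence of length $d-3$ to cut down by, since any such sequence has length at most the Ext-depth, which is $2$.
\item[(b)] For $d=4,5$, each regular element lowers the depth by exactly one. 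So $R^+/\underline{x}R^+$ has depth $5-d\in\{0,1\}$ over the three-dimensional ring $R/\underline{x}R$, not $2$. The preceding proposition therefore does not apply: its proof needs $\depth=\dim-1$, so as to end over a one-dimensional ring where $\Tor_1(\overline M,\overline R/y\overline R)=(0:_{\overline M}y)$.
\item[(c)] You also cannot rerun the $d=3$ reasoning on $R^+/\underline{x}R^+$, since it is not an absolute integral closure.
\end{itemize}
The ``adequate substitute'' you allude to is exactly the crux, and you do not supply it.

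\textbf{What is missing.} Depth is the wrong invariant once $d\ge 4$. The \v{C}ech resolution of $H^d_{\mathfrak m}(R)$, as in Proposition~\ref{prop:non-rigid}, gives $\Tor_1^R(M,H^d_{\mathfrak m}(R))\cong H^{d-1}_{\mathfrak m}(M)$. Moreover, $\Tor_1^R(M,k)=0$ propagates to $\Tor_1^R(M,L)=0$ for every $\mathfrak m$-torsion module $L$, by induction on length and then direct limits. Hence $\beta_1(R^+)=0$ forces $H^{d-1}_{\mathfrak m}(R^+)=0$. A proof for $d>3$ therefore needs a normal module-finite extension $S$ of $R$ with $H^{d-1}_{\mathfrak m}(S)\neq 0$; the trace splitting would then embed this into $H^{d-1}_{\mathfrak m}(R^+)$. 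For $d=3$ this is exactly your depth statement. For $d>3$ it is a genuine existence result about finite covers, which the paper sidesteps by citing \cite{p} and for which your proposal gives no replacement.
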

	
\begin{proof}
For \(d=3\), the result follows from the preceding proposition. Indeed, \(R^+\) is a direct limit of normal rings, each of which satisfies Serre's condition \((S_2)\), so \(\depth(R^+)\ge 2\). Since \(R^+\) is not big Cohen--Macaulay  (see \cite[Ex. 7.2]{h}), we must have \(\depth(R^+)=\dim(R)-1=2\). The proposition then yields \(\beta_1(R^+)\neq 0\). For \(d>3\), the result is established in \cite{p}.
\end{proof}

\begin{example}
	Suppose \(R\) is a \(2\)-dimensional regular local ring. Then \(\operatorname{Tor}_+^R(R^+, k) = 0\). Indeed, this is easy to see since \(\depth_R(R^+) = 2\). Recall from \cite[4.1]{BS} that
	\[
	0 = \depth(R) - \depth(R^+) = \operatorname{resdim}(R^+).
	\]
	Therefore \(\operatorname{Tor}_i^R(R^+, k) = 0\) for all \(i > 0\). Alternatively, under the additional completeness assumption, one may use \cite[23.1]{mat} and deduce that \(R \to R^+\) is flat, since a direct limit of flat modules is flat, and \(R^+\) is a direct limit of local Cohen--Macaulay rings. Since $\pd(k)<\infty$ it follows that \(\operatorname{Tor}_+^R(R^+, k) = 0\) (see {Observation} \ref{3.3}). 
\end{example}

\begin{corollary}\label{qu}
	Let \(R\) be a quotient singularity containing \(\mathbb{Q}\). If
	\(
	\operatorname{Tor}_i^R(R^+, R/\mathfrak{m}) = 0
	\)
	for some \(i > 0\), then \(R\) is regular.
\end{corollary}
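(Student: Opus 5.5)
The plan is to reduce to the finite extension $R\subseteq S$ realizing the quotient singularity and to pull back a splitting of $R\to R^+$. By definition, $R$ a quotient singularity containing $\mathbb{Q}$ means (after completion, which does not change Betti numbers over $R$ or regularity) that $R=S^G$, where $S$ is a regular local ring, $G$ is a finite group acting on $S$, and $|G|$ is invertible since $\mathbb{Q}\subseteq R$. We may take $S$ to be a domain, so $R\subseteq S\subseteq R^+$, and $R^+=S^+$ because $S$ is integral over $R$ with the same algebraic closure of fraction fields. The Reynolds operator $\rho=\frac{1}{|G|}\sum_{g\in G}g$ makes $R$ a direct summand of $S$ as an $R$-module, so $S\cong R\oplus N$.

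First, the vanishing $\operatorname{Tor}_i^R(R^+,k)=0$ should be transferred to $S$. In equicharacteristic zero, $R\to R^+$ splits: trace maps on the finite normal subextensions, normalized by the degree, are compatible and give an $R$-linear retraction, and the same holds for $S\to S^+=R^+$. Hence $S$ is an $R$-direct summand of $R^+$, and since $\operatorname{Tor}$ commutes with direct sums we get $\operatorname{Tor}_i^R(S,k)=0$. Because $S$ is a direct summand of $R^+$ over $R$, this is the only step where the hypothesis is used.

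Next, since $R$ is a direct summand of $S$, we also get $\operatorname{Tor}_i^R(R,k)\oplus\operatorname{Tor}_i^R(N,k)=\operatorname{Tor}_i^R(S,k)=0$. Now $S$ is a finitely generated maximal Cohen--Macaulay $R$-module (it is regular of the same dimension and finite over $R$), and $R$ is Cohen--Macaulay by Hochster--Eagon. A finitely generated module with a vanishing Betti number does not in general have finite projective dimension, so we cannot conclude directly. The plan is instead to use the isolated-singularity-type rigidity available here, via \cite[1.1]{IYENGAR}, or to argue as follows. Take $i$ large by replacing $S$ with a syzygy, and apply \cite[1.1]{IYENGAR} (valid for $i\ge d$ after passing to $\Syz_{d}$, which is still a direct summand situation) to get $\operatorname{Tor}_j^R(S,k)=0$ for all $j\ge i$. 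Thus $\pd_R S<\infty$. By Auslander--Buchsbaum, since $\depth S=\depth R$, the module $S$ is free over $R$. Then $R$ is a direct summand of a free module, and more importantly, $S$ is faithfully flat over $R$. Regularity descends along flat local maps, so $R$ is regular. (Alternatively, $k$ has a finite resolution over $S$, which pushes down.)

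The main obstacle is the middle step: justifying $\pd_R S<\infty$ from a single vanishing $\operatorname{Tor}_i^R(S,k)=0$, possibly with small $i$. I would first try rigidity of Tor for the finite module $S$. If this fails for small $i$, I would use that $S$ is reflexive/MCM and shift dimensions through syzygies as in the proposition for $3$-dimensional reflexive modules, so that \cite[1.1]{IYENGAR} applies. The splitting of $R\to R^+$ in characteristic zero is standard and should cause no difficulty.
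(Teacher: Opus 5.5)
Your outline matches the paper's. You split the finite regular extension $S$ off $R^+$ (the paper uses $A=k[x_1,\dots,x_n]$), deduce $\operatorname{Tor}_i^R(S,k)=0$, get $\pd_R S<\infty$, conclude by Auslander--Buchsbaum that $S$ is $R$-free, and descend regularity along $R\to S$. The splitting step and the last two steps are fine. The step you single out as the main obstacle, however, is mishandled, even though it is actually immediate.

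Your claim that a finitely generated module with a vanishing Betti number need not have finite projective dimension is false over a Noetherian local ring. Let $F_\bullet\to S$ be a minimal free resolution. The differentials of $F_\bullet\otimes_R k$ are zero, so $\operatorname{rank}F_j=\beta_j^R(S)$. Hence $\beta_i^R(S)=0$ gives $F_i=0$, i.e.\ $\Syz_i^R(S)=0$, and $\pd_R S\le i-1$ for whatever $i>0$ you are given. The paper reaches the same point via finite flat dimension and Gruson--Raynaud, which is more than a finitely generated module needs.

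The workaround you propose would not work anyway. Passing to syzygies \emph{lowers} the index, since $\operatorname{Tor}_j^R(\Syz_d S,k)\cong\operatorname{Tor}_{j+d}^R(S,k)$ for $j\ge 1$. So a vanishing at some $i<\dim R$ can never be moved into the range $i\ge\dim R$ that the rigidity result you cite requires. The transpose trick in the reflexive-module proposition raises the index only by $2$, which suffices there only because $\dim R=3$. Replace your middle paragraph with the minimal-resolution argument and the proof goes through.

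A smaller point: do not complete. With the paper's definition $R=k[x_1,\dots,x_n]^G$, localized at the homogeneous maximal ideal if you want a local ring, the corresponding localization of $A$ is already a regular local domain, finite over $R$ and contained in $R^+$. If you instead use an analytic definition and pass to $\widehat R$, the hypothesis only yields $\operatorname{Tor}_i^{\widehat R}(\widehat R\otimes_R R^+,k)=0$. Since $\widehat R\otimes_R R^+$ is in general not $(\widehat R)^+$, your claim that $S$ is a direct summand of the relevant module is then not justified.
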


\begin{proof}
	By definition, a quotient singularity is of the form \(R = k[x_1, \dots, x_n]^G\) for a finite group \(G\) acting linearly on \(A := k[x_1, \dots, x_n]\). Recall that \(R \subseteq A\) is an integral extension, and so \(R\) is normal. Since \(R\) is normal, every finite extension of \(R\) splits (see \cite[Lemma 2.1]{p}). Hence the inclusion \(A \hookrightarrow R^+\) splits, so \(\operatorname{Tor}_i^R(A, k)\) is a direct summand of \(\operatorname{Tor}_i^R(R^+, k) = 0\). Thus \(\operatorname{Tor}_i^R(A, k) = 0\) as well.
Since \(A\) is finitely generated as an \(R\)-module, it has finite flat dimension over \(R\). By the Gruson--Raynaud theorem, \(A\) has finite projective dimension over \(R\). As \(A\) is Cohen--Macaulay, the Auslander--Buchsbaum formula implies that \(A\) is flat over \(R\). Since \(A\) is regular, it follows from \cite[2.2.12(a)]{BH} that \(R\) is regular.
\end{proof}

\begin{discussion}(Reduction to complete rings).
Let $(R,\mathfrak m)$ be an excellent local domain. There are three natural
objects associated to $R$, namely
\(\{
(\widehat{R})^+,
\widehat{R}\otimes_R R^+,
\widehat{R^+}\}.
\)
It may be worth to finding conditions for which there is an
injective, surjective, or isomorphisms between these.
For example, let $(R,\mathfrak m)$ be an excellent analytically irreducible local domain.
Assume that
$
\widehat R\otimes_R R^+
$
is a domain. Then there exists a natural injective homomorphism
$
\widehat R\otimes_R R^+
\hookrightarrow
(\widehat R)^+.
$ Indeed, let $K:=\operatorname{Frac}(R)$ and fix an algebraic closure $\overline K$ of
$K$. Since
$
R^+=\varinjlim_{L/K} \overline R_L,
$
where $\overline R_L$ denotes the integral closure of $R$ in a finite field
$
\widehat R\otimes_R R^+
\cong
\varinjlim_{L/K}
\bigl(\widehat R\otimes_R \overline R_L\bigr).
$ Since $R$ is excellent, each ring
$
\widehat R\otimes_R \overline R_L
$
is normal. Moreover, it is finite over $\widehat R$, hence integral over
$\widehat R$. Therefore
$
\widehat R\otimes_R \overline R_L
$
is contained in the integral closure of $\widehat R$ inside an algebraic
closure of $\operatorname{Frac}(\widehat R)$, namely $(\widehat R)^+$.
Consequently,
$
\widehat R\otimes_R \overline R_L
\subseteq
(\widehat R)^+
$
for every finite extension $L/K$. Passing to the direct limit yields
$
\widehat R\otimes_R R^+
=
\varinjlim_{L/K}
\bigl(\widehat R\otimes_R \overline R_L\bigr)
\subseteq
(\widehat R)^+.
$	Hence there is a natural injective homomorphism
$
\widehat R\otimes_R R^+
\hookrightarrow
(\widehat R)^+.
$
\end{discussion}
The following completes the proof of Theorem~\ref{17}.

\begin{proposition}\label{thm:finite-CM-type}
	Let \(R\) be a homogeneous Cohen--Macaulay domain of dimension \(d\) over an algebraically closed field of characteristic \(0\). Assume that \(R\) is of finite Cohen--Macaulay type. If
	\(
	\operatorname{Tor}_i^R(R^+, k) = 0
	\)
	for some \(i > d\), then \(R\) is a UFD and has multiplicity at most two.
\end{proposition}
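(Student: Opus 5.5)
Since $R$ is Cohen--Macaulay of finite Cohen--Macaulay type over an algebraically closed field of characteristic zero, the main step is the Auslander--Esnault--Herzog classification. Auslander showed that such an $R$ has an isolated singularity. Combined with the graded classification (Eisenbud--Herzog, and Auslander--Reiten in the graded setting), this tells us that $R$ is a graded simple (ADE) hypersurface singularity, a rational normal scroll of finite type, or a Veronese/quotient ring $k[x_1,\dots,x_n]^G$ with $G$ finite (the two-dimensional case). The plan is to dispose of each family separately. The vanishing of $\operatorname{Tor}_i^R(R^+,k)$ will be used either to force regularity or to land in the residual case "UFD of multiplicity $\le 2$".

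First I upgrade the single vanishing to a range of vanishings. Since $R$ is an isolated singularity and $i>d$, \cite[1.1]{IYENGAR} gives $\operatorname{Tor}_j^R(R^+,k)=0$ for all $j\ge i$. Fact~\ref{shh1} then yields $\operatorname{pd}_R(R^+)<\infty$, exactly as in the proof of Theorem~\ref{thm:isolated}. Next I pass to module-finite pieces: $R$ is normal, so by \cite[Lemma 2.1]{p} every module-finite normal extension $R\subseteq B\subseteq R^+$ is an $R$-direct summand of $R^+$. Hence $\operatorname{Tor}_j^R(B,k)=0$ for $j\ge i$, and $B$ is finitely generated, so $\operatorname{pd}_R B<\infty$.

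Now the case split. In the quotient-singularity case ($R=A^G$ with $A$ a polynomial ring) I take $B=A$ and argue exactly as in Corollary~\ref{qu}. Auslander--Buchsbaum with $A$ Cohen--Macaulay gives that $A$ is free over $R$, so $R$ is regular; in particular it is a UFD of multiplicity $1$. For the scrolls and Veronese rings of higher multiplicity I want a contradiction in the same way. Here the extension $B$ is the polynomial ring that the scroll or Veronese sits in as an invariant ring of a finite (cyclic) group, and the same argument shows $R$ would be regular, contradicting multiplicity $>1$. What remains is the hypersurface case. A graded simple singularity of multiplicity $2$ (a quadric-type equation $x_0^2+f$) is allowed by the statement, and I still need unique factorization. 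For $d\ge 4$ this is Grothendieck's parafactoriality (Call's theorem for isolated hypersurface singularities of dimension $\ge 3$). In low dimension I will instead use that $\operatorname{pd}_R B<\infty$ for every finite normal extension $B$ of $R$. Take $B$ to be the extension obtained by adjoining a root of a class in $\operatorname{Cl}(R)$ (a cyclic cover $\bigoplus_{n} \mathfrak{a}^{(n)}$). Then each divisorial ideal $\mathfrak{a}$ is a direct summand of $B$, hence of finite projective dimension; as a reflexive rank-one module over a Cohen--Macaulay ring with finite projective dimension, it is free by Auslander--Buchsbaum. So $\operatorname{Cl}(R)=0$.

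The main obstacle is making the classification step airtight in the graded setting, in particular ensuring that every non-hypersurface member of the list is genuinely a quotient of a regular ring by a finite group, so that the Corollary~\ref{qu} argument applies. A second concern is the cyclic-cover argument for torsion classes. I expect it to require $\operatorname{Cl}(R)$ to be torsion; for finite CM type this holds, since the class group is finite, and I would cite that finiteness explicitly.
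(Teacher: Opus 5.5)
Your framework is the right one: a module-finite normal $B\subseteq R^+$ is an $R$-direct summand of $R^+$, hence $\operatorname{pd}_RB<\infty$. It agrees with the paper on the polynomial ring, on cases (vi) and (viii), and on the quadrics of dimension $2$ and $\ge 4$. However, two members of the Eisenbud--Herzog list escape your case analysis.

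\textbf{The cubic scroll (case (vii)).} The ring $k[x,y,z,u,v]/I_2\begin{pmatrix}x&y&u\\ y&z&v\end{pmatrix}$ is the cone over the cubic scroll $S(1,2)\cong\mathbb{F}_1$. It is \emph{not} an invariant ring of a finite group acting on a polynomial ring. Its class group is $\operatorname{Pic}(\mathbb{F}_1)/\mathbb{Z}H\cong\mathbb{Z}$, whereas $k[V]^G$ with $G$ finite always has finite class group. It does embed in $k[s,t,y_1,y_2]$ as the monomial algebra on $sy_1,ty_1,s^2y_2,sty_2,t^2y_2$. But that extension is not module-finite (dimension $4$ versus $3$), so there is no regular $B$ to which the argument of Corollary~\ref{qu} applies. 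The paper disposes of this ring by a separate toric result, \cite[Theorem 1.1]{MP}.

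\textbf{The threefold quadric.} Your assertion that finite Cohen--Macaulay type forces $\operatorname{Cl}(R)$ to be finite is false beyond dimension two. In dimension two it holds only because rank-one reflexive modules are MCM. The threefold quadric $k[x,y,z,w]/(xw-yz)$ has finite CM type but $\operatorname{Cl}(R)\cong\mathbb{Z}$, generated by $\mathfrak{p}=(x,y)$. For a class of infinite order, $\bigoplus_n\mathfrak{a}^{(n)}$ is not module-finite, so your cyclic-cover step produces nothing. This is exactly the case the paper treats separately, via Robert's extension $A\cong R\oplus\mathfrak{p}^{(-2)}\eta$.

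\textbf{How to repair both cases.} Both holes can be closed inside your own framework by using a double cover instead of a cyclic cover.
\begin{itemize}
\item On the quadric, $\operatorname{div}(xy)=2P+V(x,z)+V(y,w)$ with $P=V(x,y)$. The odd coefficients show that $xy$ is not a square in $\operatorname{Frac}(R)$. Hence the normalization $B$ of $R[\sqrt{xy}]$ is a module-finite normal domain. The Galois involution splits it as $B=R\oplus L\sqrt{xy}$, with $L=\{r:\ r^2xy\in R\}=\mathcal{O}_R(P)\cong\mathfrak{p}^{-1}$, which is non-principal.
\item On the scroll, use the monomial description above. Let $D_s,D_t,D_{y_1},D_{y_2}$ be the torus-invariant prime divisors cut out by the facets $s=0$, $t=0$, $y_1=0$, $y_2=0$ of the exponent cone. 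Then $\operatorname{div}(sy_1\cdot ty_1)=D_s+D_t+2D_{y_1}$, and $[D_{y_1}]$ generates $\operatorname{Cl}(R)$. So adjoining $\sqrt{sty_1^2}$ works the same way.
\end{itemize}
In either case $L$ is an $R$-summand of $R^+$, so $\operatorname{pd}_RL<\infty$.

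Note that ``free by Auslander--Buchsbaum'' is not the right justification once $d\ge 3$, since a divisorial ideal need not be MCM. Use instead that an ideal with a finite free resolution has the form $aJ$ with $a$ a non-zerodivisor and $\operatorname{grade}J\ge 2$; a reflexive such ideal is therefore principal.

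Finally, your upgrade via \cite{IYENGAR} and Fact~\ref{shh1} is unnecessary. For a finitely generated $B$, a single vanishing $\beta_i(B)=0$ already gives $\operatorname{pd}_RB<i$.
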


\begin{proof}
	By the classification of Eisenbud--Herzog (see \cite[Theorem 16.6]{lw}), the ring \(R\) is isomorphic to one of the following:
	
\begin{itemize} \item[(i)] \(k[x_0, \ldots, x_n]\) \(\exists n \geqslant 0\);\quad (ii) \(k[x_0, \ldots, x_n]/(x_0^2 + \cdots + x_n^2)\) \(\exists n \geqslant 0\); \item[(iii)] \(k[x]/(x^m)\) \(\exists m \geqslant 1\); \ \ \quad(iv) \(k[x, y]/(xy(x + y))\); \quad(v) \(k[x, y, z]/(xy, yz, xz)\); \item[(vi)] \(k[x_0, \ldots, x_m]/I_2\!\begin{pmatrix} x_0 & \cdots & x_{m-1} \\ x_1 & \cdots & x_m \end{pmatrix}\) for some \(m \geqslant 1\); \item[(vii)] \(k[x, y, z, u, v]/I_2\!\begin{pmatrix} x & y & u \\ y & z & v \end{pmatrix}\); and (viii) \(k[x, y, z, u, v, w]/I_2 \begin{pmatrix} x & y & z \\ y & u & v \\ z & v & w \end{pmatrix}\). \end{itemize}
	
Since \(R\) is a domain, cases (iii)--(v) cannot occur. Case (i) is regular, and hence \(R\) is a UFD of multiplicity one.
By \cite[Example 16.4]{lw}, the ring in (viii) is a quotient singularity. It is well known that \(R\) is normal, non-Gorenstein, and of minimal multiplicity (see \cite[Theorem 16.7]{lw}). Therefore Corollary~\ref{qu} shows that
	\(
	\operatorname{Tor}_j^R(R^+, k) \neq 0
	\)
for all \(j > 0\). In particular, this ring is excluded by the assumption.
	Also, the ring in (vii) (see \cite[Example 16.2]{lw}) is isomorphic to
	\[
	R \cong k[x, y, z, u, v]/(yv - zu,\; yu - xv,\; xz - y^2).
	\]
	This is a quotient of a polynomial ring by a binomial prime ideal, and hence \(R\) is toric. It is well known that \(R\) is normal (for example, it is \((S_2)\) and \((R_1)\)). Thus, we are in the situation of \cite[Theorem 1.1]{MP} to exclude this ring as well.
Moreover, by \cite[p.~291]{lw}, the ring in (vi) is isomorphic to
	\(
	k[u^m, u^{m-1}v, \ldots, v^m],
	\)
	and hence is also a quotient singularity. Therefore Corollary~\ref{qu} shows that
	\(
	\operatorname{Tor}_j^R(R^+, k) \neq 0
	\)
	for all \(j > 0\). Thus these cases are excluded.
It remains to consider case (ii), namely the \(A_1\)-singularity
	\(
	R = k[x_0, \ldots, x_n]/(x_0^2 + \cdots + x_n^2).
	\)
	When \(d = 2\), the result follows from Corollary~\ref{qu}; indeed, by a theorem of Esnault--Herzog, such a ring is a quotient singularity. Alternatively, one may use \cite[Theorem (A)]{p}.
Assume now that \(d \geqslant 3\). It is well known that
	\(
	\operatorname{Cl}(R) = 0
	\)
	whenever \(d \geqslant 4\); see \cite[Section 11]{f}. Hence \(R\) is a UFD.
The remaining case is \(d = 3\). In this case
	\(
	R \cong k[x, y, z, w]/(xw - yz),
	\)
	and
	\(
	\operatorname{Cl}(R) \cong \mathbb Z
	\)
	by \cite[11.3]{f}. Thus \(R\) is not a UFD.
	Let \(\mathfrak p = (x, y)\), which generates \(\operatorname{Cl}(R)\). Following \cite[Example 2]{Rob}, one can construct a module-finite normal domain extension
	\(
	A \cong R \oplus \mathfrak p^{(-2)}\eta
	\)
	contained in \(R^+\), where \(\eta \in R^+\) is an element without a perfect square in \(R\) (we may impose further restrictions on it; see \cite[Example 2]{Rob}).
	On the other hand, as observed in \cite{p}, the assumption
	\(
	\operatorname{Tor}_i^R(R^+, k) = 0
	\)
	implies
	\(
	\operatorname{Tor}_i^R(A, k) = 0.
	\)
	Now, recall that \(\mathfrak p^{(-2)}\eta\) is a direct summand of \(A\). Thus,
	\(
	\operatorname{Tor}_i^R(\mathfrak p^{(-2)}\eta, k) = 0.
	\)
	This shows that \(\mathfrak p^{(-2)}\eta\) has finite projective dimension, as it is finitely generated as an \(R\)-module. Recall that \(\mathfrak p^{(-2)}\eta \cong \mathfrak p^{(-2)}\) as an \(R\)-module.
	Set \(M := \mathfrak p^{(-2)} = (\mathfrak p^{(2)})^\ast\). It is easy to see the following 3 properties:
(a)  \(\operatorname{pd}_R(M) < \infty\);
(b)   \(\operatorname{End}_R(M) = \overline{R} = R\) is a projective \(R\)-module;
 (c)  \(M\) is reflexive.
Then \(M\) is a locally Gorenstein module (see, e.g., \cite[Fact 3.5]{ufd}). In particular, it is Cohen--Macaulay, but then \(A\) is Cohen--Macaulay, which it is not (see \cite[Example 2]{Rob}).
	Therefore the three-dimensional quadric hypersurface cannot occur.\footnote{This ring is toric, so one could argue as in \cite{MP}, though that paper postdates our work. We prefer the following self-contained argument.} Hence \(R\) is a UFD.
	To complete the proof, recall that
	\(
	e(k[\underline{x}]/(f)) = \deg(f),
	\)
	where \(f\) is homogeneous.
\end{proof}

 \begin{observation}
 	\begin{itemize}
 		\item[(i)] Let \(A = \frac{k[x,y,z]}{(f_1,f_2)}\) be a \(2\)-dimensional ring. Assume that \(\operatorname{Tor}_i^A(A^{+}, k) = 0\) for some \(i > 0\). Then \((f_1,f_2) = (f)\) for some polynomial \(f\).
 		
 		\item[(ii)] Suppose that \(f\) is a binomial, \(f = x^{a_1}y^{b_1}z^{c_1} - x^{a_2}y^{b_2}z^{c_2}\). In this case \(R = k[x,y,z]/(f)\) is a toric ring. For simplicity, consider \(f = x^3 - zy\). Define a \(k\)-algebra homomorphism \(\pi: k[x,y,z] \longrightarrow k[u_1,u_2]\) by
 		\[
 		x \longmapsto u_1u_2, \qquad y \longmapsto u_1^2u_2, \qquad z \longmapsto u_1u_2^2.
 		\]
 		Then
 		\(
 		\pi(x^3 - zy) = (u_1u_2)^3 - (u_1u_2^2)(u_1^2u_2) = 0.
 		\)
 		Hence \(x^3 - zy \in \ker(\pi)\). Since \(\ker(\pi)\) is a height one prime ideal of the UFD \(k[x,y,z]\), it is principal. Therefore \(\ker(\pi) = (x^3 - zy)\). Consequently,
 		\(
 		R = \frac{k[x,y,z]}{(x^3 - zy)} \hookrightarrow k[u_1,u_2].
 		\)
 		Since \(k[u_1,u_2]\) is normal, we obtain an inclusion \(R \subseteq k[u_1,u_2] \subseteq R^{+}\). Now assume that \(\operatorname{Tor}_i^R(k, R^{+}) = 0\) for some \(i > 0\). Because \(k[u_1,u_2]\) is a direct summand of \(R^{+}\) (by the usual splitting argument), it follows that \(\operatorname{Tor}_i^R(k, k[u_1,u_2]) = 0\). Hence \(k[u_1,u_2]\) has finite flat dimension as an \(R\)-module. Since \(k[u_1,u_2]\) is Cohen--Macaulay, the Auslander--Buchsbaum formula implies that \(R\) is regular. But \(R = \frac{k[x,y,z]}{(x^3 - zy)}\) is not regular, a contradiction. Therefore \(\operatorname{Tor}_i^R(k, R^{+}) \neq 0\).
 		
 		\item[(iii)] More generally, suppose \(H \subseteq \mathbb{N}^n\) is an affine semigroup. Define the semigroup ring
 		\[
 		R := \mathbb{Q}[H] := \bigoplus_{(a_1,\ldots,a_n) \in H} \mathbb{Q}\, x_1^{a_1}\cdots x_n^{a_n} \subseteq \mathbb{Q}[x_1,\ldots,x_n].
 		\]
 		Suppose \(\operatorname{Tor}_i^R(k, R^+) = 0\) for some \(i > 0\). Then \(R\) is normal, which implies that \(H\) is a normal semigroup. Recall that a normal semigroup satisfies \(H = H_1 \cap \mathbb{Z}_+^t\), where \(H_1\) is positive provided. Since, by assumption, \(H \subseteq \mathbb{N}^n\), it follows that \(H\) is positive. Hence \(H\) is normal and positive. Such semigroups are called \emph{full}. For every full semigroup, we know that \(k[H]\) is a direct summand of a polynomial ring (for more details, see \cite[Section 6.1]{BH}). The forthcoming work \cite{MP} shows that, under the assumption \(\operatorname{Tor}_i^R(k, R^+) = 0\), \(k[H]\) is indeed a polynomial ring.
 	\end{itemize}
 \end{observation}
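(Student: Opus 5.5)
The statement bundles three claims. I would handle each by reducing to splitting arguments already used in Corollary~\ref{qu}: a module-finite normal extension splits off $R^+$, and then finite projective dimension combines with Auslander--Buchsbaum.

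For (i), the hypothesis only makes sense when $A^+$ is defined, i.e.\ when $A$ is a domain. So $P:=(f_1,f_2)$ is a prime ideal of $k[x,y,z]$. Since $\dim A=2$ and $k[x,y,z]$ is catenary of dimension $3$, we get $\operatorname{ht}P=1$. In the UFD $k[x,y,z]$ every height-one prime is principal, so $P=(f)$ with $f$ any prime element contained in $P$. This part does not use the Tor hypothesis at all; it only uses the domain assumption implicit in forming $A^+$. I would point this out in the write-up.

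For (ii), the key points are already laid out in the statement.
\begin{itemize}
\item First I would verify $\ker\pi=(x^3-zy)$. The image $k[u_1u_2,u_1^2u_2,u_1u_2^2]$ has dimension $2$, so $\ker\pi$ is a height-one prime, hence principal. Moreover $x^3-zy$ is irreducible, so it generates $\ker\pi$.
\item Next, $k[u_1,u_2]$ is module-finite over $R$. Indeed, $u_1^3u_2^3=x^3$, and more usefully $u_1^3=y^2/z\cdot$(unit bookkeeping). I would instead check that $u_1,u_2$ satisfy monic equations: $u_1^3$ times $(u_1u_2)^3=y^3/\cdots$. The cleanest route is to note that $x,y,z$ generate an ideal primary to $(u_1,u_2)$, so $k[u_1,u_2]$ is finite over $R$ by the graded Nakayama lemma.
\item Since $R$ is normal (a hypersurface that is $(R_1)$, the singular locus being only the origin), the finite extension $R\subseteq k[u_1,u_2]$ splits off $R^+$, exactly as in Corollary~\ref{qu} via \cite[Lemma 2.1]{p}.
\item Hence $\operatorname{Tor}^R_i(k,k[u_1,u_2])=0$. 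Because $k[u_1,u_2]$ is finitely generated, it has finite projective dimension. It is Cohen--Macaulay of depth $2=\dim R$, so Auslander--Buchsbaum makes it free over $R$. Then $R$ is regular by \cite[2.2.12(a)]{BH}, which is a contradiction.
\end{itemize}

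For (iii), the real content is the normality claim, and I expect this to be the main obstacle. Let $S=\overline{R}=\mathbb Q[\overline H]$ be the normalization. It is module-finite over $R$ and is a normal domain, and by Hochster's theorem it is Cohen--Macaulay. Since $R^+=S^+$ and $S$ is normal, $S$ is a direct summand of $R^+$ as an $S$-module, hence also as an $R$-module. Therefore $\operatorname{Tor}_i^R(S,k)=0$, and since $S$ is finite over $R$, $\operatorname{pd}_R S<\infty$.

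I then have to pass to the local ring at the irrelevant ideal, which is fine since everything is graded. There $\operatorname{depth}S=\dim R\ge\operatorname{depth}R$, and Auslander--Buchsbaum forces $S$ to be free over $R$ with $\operatorname{depth}R=\dim R$. The rank of $S$ equals $[\operatorname{Frac}S:\operatorname{Frac}R]=1$, so $S=R$ and $R$ is normal.

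Once normality is established, the rest is citation. A normal positive affine semigroup (automatically positive inside $\mathbb N^n$) is full. By \cite[Section 6.1]{BH}, $k[H]$ is then a direct summand of a polynomial ring, and \cite{MP} upgrades the Tor-vanishing to $k[H]$ being a polynomial ring.
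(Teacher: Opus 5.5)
Your proof of (ii) fails at the claim that $k[u_1,u_2]$ is module-finite over $R$. That claim is false, not merely badly justified. First, $(x,y,z)k[u_1,u_2]=(u_1u_2,u_1^2u_2,u_1u_2^2)=(u_1u_2)$ is a height-one ideal, not primary to $(u_1,u_2)$. So $k[u_1,u_2]/\mathfrak m k[u_1,u_2]=k[u_1,u_2]/(u_1u_2)$ is infinite-dimensional, and graded Nakayama yields nothing. More decisively, $u_1=y/x$ and $u_2=z/x$ lie in $\Frac(R)$, so $k[u_1,u_2]=R[y/x,z/x]$ is a birational overring of $R$ (an affine chart of a blow-up). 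Since $R$ is normal, every element of $\Frac(R)$ that is integral over $R$ lies in $R$. But $u_1\notin R$, because $(1,0)$ is not in the semigroup (not even in the cone) generated by $(1,1),(2,1),(1,2)$. Hence $k[u_1,u_2]$ is not integral over $R$ and is not a subring of $R^+$. Your splitting and Auslander--Buchsbaum steps therefore have nothing to act on. The statement's own reason, ``since $k[u_1,u_2]$ is normal'', has the same defect: normality of an overring says nothing about integrality.

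The missing idea is to use the finite embedding given by the support forms of the cone, i.e. the presentation of this $A_2$-singularity as a quotient singularity: $x\mapsto st$, $y\mapsto s^3$, $z\mapsto t^3$. The same height-one argument shows the kernel is $(x^3-yz)$. Now $k[s,t]=\sum_{0\le a,b\le 2}R\,s^at^b$ is module-finite over $R$ because $s^3,t^3\in R$, and $(st,s^3,t^3)$ is primary to $(s,t)$, so localizing at the irrelevant ideal gives a local extension. With $k[s,t]$ in place of $k[u_1,u_2]$, your remaining steps go through: the normal ring $k[s,t]$ splits off $k[s,t]^+=R^+$ when $\mathbb{Q}\subseteq k$, then finite projective dimension, Auslander--Buchsbaum, and \cite[2.2.12(a)]{BH}. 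Alternatively, if $k$ contains a primitive cube root of unity, then $R\cong k[s,t]^{\mu_3}$ and Corollary~\ref{qu} applies directly.

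Your parts (i) and (iii) are fine. In (i), given that $A^+$ presupposes $A$ is a domain (as in the paper's Notation), the conclusion follows without the Tor hypothesis. In (iii), your normalization argument supplies the justification of ``$R$ is normal'' that the statement merely asserts: $S=\overline R$ is Cohen--Macaulay by Hochster and is a summand of $S^+=R^+$, hence free of rank one over $R$ at the irrelevant ideal.
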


\subsection{Ext-vanishing for \(\{R^+,R^\infty\}\)}

This subsection contains our main Ext-vanishing results for the absolute integral closure.
Here is the reason that explains why we study the condition \(\mu_{j}(R^+)=0\) only for $j>\dim(R)$.
The results here are motivated from \cite{moh,moh5}.
\begin{example}
	Let $R$ be any two dimensional local ring. Then $\mu_2(R^+)\neq 0$.
\end{example}

\begin{proof}
In this case $R^+$ is direct limit of normal rings, and so it  big Cohen-Macaulay. This shows Ext-grade of $R^+$ is two, and so $\mu_2(R^+)\neq 0$.
\end{proof}

\begin{proposition}\label{811}
	Let \((R,\mathfrak{m})\) be a local ring of prime characteristic with isolated singularity. If
	\(
	\operatorname{Ext}_{R}^{i}(R/\mathfrak{m}, R^+) = 0
	\)
	for some \(i > \dim(R)\), then \(R\) is regular.
\end{proposition}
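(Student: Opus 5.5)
We prove that $R$ is regular by showing that $\operatorname{Ext}^i_R(k,R^+)=0$ forces $R^+$ to behave like a module of finite injective dimension over a ring whose singular locus is only $\{\mathfrak m\}$. We then play this against the fact that, in prime characteristic, $R^+$ is a (balanced) big Cohen--Macaulay algebra; this holds at least after completion, by Hochster--Huneke, and we may pass to $\widehat R$ since Bass numbers and regularity are unchanged by completion.

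The key steps, in order, are as follows.

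First, we replace $R$ by $\widehat R$ and $R^+$ by a big Cohen--Macaulay algebra $B$ such that $\operatorname{Ext}^i_R(k,B)$ still vanishes. The cleanest choice is $B=\widehat{R^+}$ (the $\mathfrak m$-adic completion), or simply $R^+$ itself over an excellent $R$. The issue to check is compatibility of $\operatorname{Ext}(k,-)$ with this base change: $\operatorname{Ext}$ out of a finitely generated module commutes with flat base change $R\to\widehat R$.

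Second, we use the Ext analogue of the rigidity result \cite[1.1]{IYENGAR} used in Theorem~\ref{thm:isolated}. Since $i>\dim R$, the vanishing of $\mu_i$ propagates, so $\operatorname{Ext}^j_R(k,R^+)=0$ for all $j\ge i$. Dually to Fact~\ref{shh1}, the isolated singularity hypothesis handles all non-maximal primes, because each $R_{\mathfrak p}$ with $\mathfrak p\neq\mathfrak m$ is regular and so $R^+_{\mathfrak p}$ has finite injective dimension there. Together these give $\operatorname{id}_R(R^+)<\infty$; this is the Ext version of \cite[Theorem VI.10]{sc3}.

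Third, we exploit finite injective dimension. A Bass-type formula for modules of finite injective dimension (the Ext counterpart of Bartijn--Strooker \cite[4.1]{BS}) gives $\operatorname{id}_R(R^+)=\operatorname{depth} R$. Combining this with $\operatorname{Ext}^d_R(k,R^+)\neq0$ (since $R^+$ is big Cohen--Macaulay) shows that $\operatorname{depth}R=d$, so $R$ is Cohen--Macaulay.

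Fourth, we upgrade from Cohen--Macaulay to regular using Frobenius. In prime characteristic, $R^+$ is preserved by Frobenius: $F_*R^+\cong R^+$ as a ring, and this is compatible with the $R$-structure twisted by $F$. Choose a module-finite normal extension $R\subseteq S\subseteq R^+$ that contains $F_*^eR$ for large $e$, for instance $R^{1/p^e}\subseteq R^+$. Using that $\mathfrak m$-adic data and $F_*$ commute suitably, we deduce that $F^e_*R$ has finite injective dimension. Here we would use that in the local prime-characteristic setting $F$-finiteness can be arranged after completion and a gonflement. By the injective-dimension version of Kunz's theorem (Herzog/Kunz–Peskine–Szpiro for finite injective dimension of $F^e_*R$ over a CM ring, together with $F^e_*R$ being a maximal CM module), it follows that $R$ is regular.

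The main obstacle is the fourth step: transferring vanishing of $\operatorname{Ext}^{\gg0}_R(k,-)$ from the huge module $R^+$ to the finitely generated direct summand $R^{1/p^e}$. Unlike Tor, Ext out of $k$ does not obviously pass to summands that are not split as required, and $R^{1/p^e}\to R^+$ splits only for normal $R$ (trace-type splitting fails in characteristic $p$). What we would try first is to show that $\operatorname{Ext}^j_R(k,R^+)\cong\varinjlim_e\operatorname{Ext}^j_R(k,R^{1/p^e})\otimes(\text{something faithful})$, or more directly to run the Matlis-dual argument. That is, use $\operatorname{Ext}^j_R(k,R^+)^\vee\cong\operatorname{Tor}_j^R(k,(R^+)^\vee)$ and the fact that the Frobenius action makes all the modules $\operatorname{Ext}^j_R(k,R^{1/p^e})$ inject into $\operatorname{Ext}^j_R(k,R^+)$ in high degrees. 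If that fails, the fallback is to avoid Frobenius entirely: argue that finite injective dimension of the big Cohen--Macaulay module $R^+$ over an isolated singularity already forces $R$ to be Gorenstein, via the existence of a module of finite injective dimension of depth $d$. Regularity would then follow from an additional Koszul-duality argument on $\operatorname{Ext}(k,R^+)$.
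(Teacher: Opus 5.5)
Your second step is the same as the paper's argument. \cite[1.2]{IYENGAR} propagates the vanishing to $\Ext^j_R(k,R^+)=0$ for all $j\ge i$. Schoutens' criterion \cite[Theorem VI.9]{sc3} then gives $\id_R(R^+)<\infty$; the isolated singularity hypothesis handles the non-maximal primes, since every module over a regular local ring has finite injective dimension. The paper finishes at once by citing \cite[Example 3.17(i)]{ryo}: in prime characteristic, finite injective dimension of $R^+$ forces $R$ to be regular.

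Your Steps~3 and~4 try to prove that last implication by hand, and this is where there is a genuine gap. Step~4 gives no working way to pass from $\id_R(R^+)<\infty$ to $\id_R(F^e_*R)<\infty$, or to finite injective dimension of any finitely generated module. Finite injective dimension does descend to direct summands. Injectivity of $\Ext^j_R(k,R^{1/p^e})\to\Ext^j_R(k,R^+)$ would follow from purity, as in the Fact preceding Proposition~\ref{5.15}. But nothing in the hypotheses makes $R^{1/p^e}\subseteq R^+$ split, or even pure, over $R$. In prime characteristic that is a splinter-type condition, not a consequence of having an isolated singularity. So ``the Frobenius action makes these maps inject'' is exactly the missing statement, not an argument for it.

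The fallback fails too. Over a Cohen--Macaulay ring with a canonical module, $\omega_R$ has depth $d$ and finite injective dimension. So the existence of such a module does not force $R$ to be Gorenstein, and the final ``Koszul-duality'' upgrade to regularity is never specified.

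A smaller point concerns Step~3. The formula $\id_R(R^+)=\depth R$ is not valid for modules that are not finitely generated. For $M$ of finite injective dimension, Chouinard's formula gives $\id_R M=\sup_{\fp}\{\depth R_\fp-\operatorname{width}_{R_\fp}M_\fp\}$, which can exceed $\depth R$. What Matlis duality together with \cite[4.1]{BS} does give is
$\sup\{j:\mu_j(R^+)\neq 0\}=\depth R-\operatorname{width}_R R^+=\depth R$.
Combined with $\mu_d(R^+)\neq 0$, this shows $R$ is Cohen--Macaulay. That nonvanishing is available once $R^+$ is big Cohen--Macaulay, e.g.\ for excellent $R$. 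Note that flat base change in your Step~1 produces $\widehat R\otimes_R R^+$, not $\widehat{R^+}$ or $(\widehat R)^+$.

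Cohen--Macaulayness is not used on the paper's route, and it does not bring you closer to regularity. The whole content of the final step is the Kunz-type input that the paper imports from \cite{ryo}, and your proposal does not supply it.
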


\begin{proof}
	The result of \cite[1.2]{IYENGAR} applied to \(\operatorname{Ext}_R^i(R/\mathfrak{m}, R^+) = 0\) yields that
	\(
	\operatorname{Ext}_R^j(R/\mathfrak{m}, R^+) = 0
	\)
	for all \(j \ge i\). This enables us to use \cite[Theorem VI.9]{sc3} and deduce that \(R^+\) has finite injective dimension. By \cite[Example 3.17(i)]{ryo}, the ring \(R\) is regular, as claimed.
\end{proof}
	
\begin{remark}
	One has \(\operatorname{Ext}_{R}^{d}(R/\mathfrak{m}, R^{+}) \neq 0\) for complete \(R\) under the above hypotheses, assuming \(R\) is of mixed characteristic with \(p \notin \mathfrak{m}^2\). Suppose not. By the Cohen structure theorem, \(R\) is module-finite over a regular ring \(A\). 
	Fix an algebraic closure L of $\Frac(R)$. Since $\Frac(R)$ is algebraic
	over $\Frac(A)$, the field $L$ is also an algebraic closure of
	$\Frac(R)$. Thus, upon taking both absolute integral closures inside $L$,
	we have
	\(
	A^+=R^+.
	\)
	 Then, by Proposition~\ref{72}, we would have \(H_{\mathfrak{m}}^{d}(R^+) = 0\). However, a theorem of Hochster \cite[Theorem 6.1(5)]{h} states that \(H_{\mathfrak{m}}^{d}(R^+) = H_{\mathfrak{m}}^{d}(A^+) \neq 0\), a contradiction.
\end{remark}

\begin{proposition}
Let \(R\) be an \(\mathbb{N}\)-graded normal domain of dimension \(2\), finitely generated over an equicharacteristic zero field \(k\). If \(\Ext_R^i(k,R^+)=0\) for some \(i \geq 3\), then \(R\) is regular.
	\end{proposition}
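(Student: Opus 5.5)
We would follow the pattern of Proposition~\ref{811}, replacing the characteristic-$p$ input by the structure of two-dimensional graded normal domains in characteristic zero. Since $R$ is normal of dimension $2$, it satisfies $(R_1)$, so $R_\mathfrak{p}$ is regular for every non-maximal prime; hence $R$ (localized at the homogeneous maximal ideal) is an isolated singularity. We will first apply \cite[1.2]{IYENGAR} to $\operatorname{Ext}_R^i(k,R^+)=0$ with $i\ge 3>\dim R$, obtaining $\operatorname{Ext}_R^j(k,R^+)=0$ for all $j\ge i$. Combined with the isolated singularity hypothesis, \cite[Theorem VI.9]{sc3} will then give $\operatorname{id}_R(R^+)<\infty$. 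This reduction is characteristic-free and should go through verbatim.

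The real work is to deduce regularity from $\operatorname{id}_R(R^+)<\infty$ in characteristic zero, where \cite[Example 3.17(i)]{ryo} (which uses Frobenius) is unavailable. We plan to transfer the finiteness to a module-finite normal extension. Since $R$ is normal and contains $\mathbb{Q}$, the trace map splits every module-finite extension $R\subseteq B\subseteq R^+$ as $R$-modules (\cite[Lemma 2.1]{p}); more precisely, $R^+=B\oplus C$. Then $\operatorname{Ext}_R^j(k,B)$ is a direct summand of $\operatorname{Ext}_R^j(k,R^+)$, so $\operatorname{id}_R(B)<\infty$ for such $B$. Taking $B=R$ itself gives $\operatorname{id}_R(R)<\infty$, so $R$ is Gorenstein. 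Since $R$ is graded of dimension $2$, normal, and Gorenstein over a characteristic-zero field, we would next use the structure theory of such rings (Watanabe/Dolgachev: $R$ is a section ring $\bigoplus H^0(X,\mathcal{O}(nD))$ on a smooth curve $X$) to produce a module-finite graded normal extension $B$ that is Cohen--Macaulay and \emph{not} free over $R$ unless $R$ is regular. For instance, we could take $B$ the normalization of $R[\omega^{1/N}]$ via a cyclic cover trivializing a torsion class in $\operatorname{Cl}(R)$, or, in the rational (quotient singularity) case, $B$ a polynomial ring as in Corollary~\ref{qu}.

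With $B$ in hand, $B$ is a finitely generated maximal Cohen--Macaulay $R$-module of finite injective dimension over the Gorenstein ring $R$. Over a Gorenstein local ring, a finitely generated module has finite injective dimension if and only if it has finite projective dimension; by Auslander--Buchsbaum, $B$ is then free over $R$. If $B$ can be chosen regular (the quotient singularity case), \cite[2.2.12(a)]{BH} gives that $R$ is regular, exactly as in Corollary~\ref{qu}. If $R$ is not a quotient singularity, we would argue as in Patankar \cite{p}: for large enough covers $B$, freeness of every finite normal extension forces the extensions to be étale over the punctured spectrum and of bounded degree. This contradicts the infinitude of the local étale fundamental group of a non-quotient two-dimensional singularity, which is infinite exactly when the singularity is not a quotient singularity. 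The main obstacle is this last step, namely constructing or controlling $B$ in the non-rational case. The cleanest route would be to quote Patankar's dimension-$2$ graded Tor theorem after converting Ext-vanishing to Tor-vanishing, using that $R$ is Gorenstein and $B$ is finite.
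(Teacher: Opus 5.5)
Your proposal does contain a complete proof, and it is the paper's, but only in your last sentence; the main route in between has a wrong step and an acknowledged gap.

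\textbf{What matches the paper.} Your first steps agree with the paper's proof. $R$, localized at its homogeneous maximal ideal, is an isolated singularity, so Schoutens' criterion gives $\operatorname{id}_R(R^+)<\infty$. The trace splitting of $R\to R^+$ then forces $\operatorname{id}_R(R)<\infty$, so $R$ is Gorenstein.

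\textbf{How the paper finishes.} The paper concludes exactly by the route you mention only at the end. Over a Gorenstein ring, an arbitrary module has finite injective dimension if and only if it has finite flat dimension. Hence $\Tor_j^R(k,R^+)=0$ for $j\gg 0$, and Patankar's Theorem~A for two-dimensional $\mathbb{N}$-graded normal domains in characteristic zero gives regularity.

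Your finite-$B$ version of this conversion is also valid, and slightly more elementary. Every module-finite normal $R\subseteq B\subseteq R^+$ is a direct summand of $R^+$. So $B$ has finite injective dimension, hence finite projective dimension since $R$ is Gorenstein and $B$ is finite. Since $B$ is Cohen--Macaulay of depth $2$, Auslander--Buchsbaum makes it free. Then $R^+$, a filtered union of free modules, is flat, and Patankar's theorem applies. Make this sentence the proof; nothing else is needed.

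\textbf{The problem with your intermediate plan.} The cyclic covers and local \'etale fundamental group argument is unnecessary, and as written it contains a false step. Freeness of finite normal extensions does not force them to be \'etale over the punctured spectrum. Over a two-dimensional regular $R$, every module-finite normal extension of dimension $2$ is free, yet $k[x,y]\subseteq k[x^{1/2},y]$ is ramified along $x=0$.

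The correct implication runs the other way: a free cover that is \'etale in codimension one is \'etale, by purity of the branch locus. Turning that into regularity would need Mumford-type results on local fundamental groups of surface singularities. In effect you would be reproving Patankar's theorem instead of quoting it.
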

	
	\begin{proof}	
Normal rings of dimension two are Cohen-Macaulay, and satisfies Serre condition $(R_1)$. Since $\dim(R)=2$, it is of isolated singularity. Now, by \cite[Page 226]{sc3}, \(R^+\) has finite injective dimension. Since $\mathbb{Q}\subseteq R$, and in view of \cite[Lemma 2.1]{p}, we know \(R\to R^+\) splits. Thanks to our vanishing assumption, it follows that \(\operatorname{Ext}^{i}_{R}(R/\mathfrak{m},R)=0\), so \(R\) is Gorenstein, because by \cite{Ro2} Bass numbers are only nonzero in the range $[\depth(R), \id(R)]$, i.e., $\id(R)<\infty$. This implies that \(R^+\) has finite flat dimension. Finally, we use  \cite[Theorem A]{p} to deduce that 
\(R\) is regular.
	\end{proof}

\begin{proposition}
	Let \(R\) be a local ring of dimension \(d\) with an isolated quotient singularity containing \(\mathbb{Q}\). If
	\[
	\operatorname{Ext}_R^i(k, R^+) = 0
	\]
	for some \(i > d\), then \(R\) is regular.
\end{proposition}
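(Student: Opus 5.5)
The plan is to follow the pattern of Corollary~\ref{qu}, transposed to Ext and combined with the isolated-singularity argument of Proposition~\ref{811}. Write \(R=k[x_1,\dots,x_n]^G\) (localized at the homogeneous maximal ideal, or completed as needed), with \(G\) a finite group acting linearly, and set \(A:=k[x_1,\dots,x_n]\) (correspondingly localized). Then \(R\subseteq A\) is a module-finite extension of normal domains, and \(R\) is Cohen--Macaulay by Hochster--Eagon. Since \(\mathbb{Q}\subseteq R\), the Reynolds operator shows that \(R\to A\) splits. By \cite[Lemma 2.1]{p}, the finite extension \(A\subseteq R^+\) also splits as an \(R\)-module map, so \(A\) is an \(R\)-direct summand of \(R^+\). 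Hence the hypothesis \(\operatorname{Ext}_R^i(k,R^+)=0\) gives \(\operatorname{Ext}_R^i(k,A)=0\), and, using the splitting \(R\to A\), also \(\operatorname{Ext}_R^i(k,R)=0\).

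Next, spread the single vanishing to all higher degrees. Since \(i>d\), \cite[1.2]{IYENGAR} (as in Proposition~\ref{811}) yields \(\operatorname{Ext}_R^j(k,R)=0\) for all \(j\ge i\). Alternatively, for the finitely generated module \(R\) one can use \cite{Ro2}: Bass numbers of a finitely generated module have no gaps between depth and injective dimension, so a vanishing \(\mu_i(R)=0\) with \(i>d\ge\depth R\) forces \(\id_R(R)<\infty\). Either way, \(R\) is Gorenstein. The same argument applied to \(A\) shows that \(A\) is a finitely generated maximal Cohen--Macaulay \(R\)-module of finite injective dimension. Over a Gorenstein ring, such a module is a direct sum of copies of the canonical module \(\omega_R\cong R\), so \(A\) is free over \(R\).

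Finally, \(A\) is regular and \(R\to A\) is flat and local (or flat with \(\mathfrak{m}A\) primary to the irrelevant ideal), so \cite[2.2.12(a)]{BH} gives that \(R\) is regular, exactly as at the end of Corollary~\ref{qu}. The isolated singularity hypothesis is used only to legitimize the passage from vanishing at one index to finiteness of injective dimension, through \cite[Theorem VI.9]{sc3} or the criterion of \cite{sc3} invoked in the previous proposition. If the no-gap argument via \cite{Ro2} applies directly to the finitely generated module \(A\), that hypothesis becomes redundant.

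The main obstacle I expect is this step from one vanishing Bass number to finite injective dimension. It has to be checked that \cite[1.2]{IYENGAR} applies when \(i>d\) for the non-finitely generated module \(R^+\). If it does not, I would transfer the vanishing to the finitely generated summands \(R\) and \(A\) first, as above, and use Roberts' theorem there. A second point to verify is that the splitting \(A\subseteq R^+\) is compatible with the localization or completion used to make \(R\) local.
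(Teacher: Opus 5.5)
Your proof is correct, and at the key step it takes a different route from the paper.

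\textbf{The paper's route.} The paper works with the non-finitely generated module $R^+$ throughout. It uses Cohen--Macaulayness together with the isolated singularity hypothesis to invoke Schoutens' criterion \cite[page 226]{sc3}, which gives $\operatorname{id}_R R^+<\infty$. It then splits off $R$ to get $\mu_i(R)=0$, so $R$ is Gorenstein by \cite{Ro2}. The Gorenstein property turns finite injective dimension of $R^+$ into finite flat dimension. Finally, $\operatorname{Tor}_j^R(k,R^+)=0$ for $j\gg 0$ is fed into Corollary~\ref{qu}, which itself passes to the summand $A$, makes $A$ free by Auslander--Buchsbaum, and concludes with \cite[2.2.12(a)]{BH}.

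\textbf{Your route.} You pass at once to the finitely generated direct summands $R$ and $A$ of $R^+$. You apply the no-gap theorem of \cite{Ro2} directly to them, which shows that $R$ is Gorenstein and that $A$ is a maximal Cohen--Macaulay $R$-module of finite injective dimension, hence free. This bypasses \cite{sc3}, the id/fd exchange for a non-finitely generated module, and the detour through Tor.

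\textbf{What each buys.} In your route the isolated singularity hypothesis is genuinely unused. So is any appeal to \cite[1.2]{IYENGAR} for $R^+$. The ``main obstacle'' you flag therefore never arises, since \cite{Ro2} is only applied to finitely generated modules. The paper's route has a different advantage: it is uniform with its other Ext results, such as Proposition~\ref{811} and the graded two-dimensional case. There no finitely generated regular summand like $A$ is available, and one has to control $R^+$ itself.

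\textbf{Two small corrections.}
\begin{itemize}
\item $A\subseteq R^+$ is not a finite extension. The $R$-splitting should instead come from the compatible normalized traces on finite normal extensions of the normal domain $A\supseteq\mathbb{Q}$, using $R^+=A^+$.
\item You should localize $k[x_1,\dots,x_n]^G$ rather than complete it. The hypothesis concerns $R^+$, and its relation to $(\widehat R)^+$ is not under control.
\end{itemize}
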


\begin{proof}
	Recall that quotient singularities in characteristic zero are Cohen--Macaulay (see, e.g., \cite[6.5.1]{BH}). This, together with \(\operatorname{Ext}_R^i(k, R^+) = 0\), enables us to use \cite[page 226]{sc3} and deduce that \(R^+\) has finite injective dimension.
	Also, quotient singularities in characteristic zero are normal domains (see, e.g., \cite[6.4.1]{BH}). Since \(\mathbb{Q} \subseteq R\), and in view of \cite[Lemma 2.1]{p}, we deduce from normality that \(R \to R^+\) splits. Our vanishing assumption then implies \(\operatorname{Ext}_R^i(R/\mathfrak{m}, R) = 0\); hence \(R\) is Gorenstein by \cite{Ro2}.
Now, using this property, we observe that \(R^+\) has finite flat dimension, so
	\(
	\operatorname{Tor}_j^R(R/\mathfrak{m}, R^+) = 0
	\)
	for all \(j \gg 0\). Using the quotient singularity property, along with Corollary~\ref{qu}, we conclude that \(R\) is regular.
\end{proof}

\begin{proposition}
	Suppose \(\mathbb{Q} \subseteq R\) has an isolated singularity, satisfies \((S_2)\), and has \(\dim R \ge 4\). If \(R\) is an almost complete intersection and
	\(
	\operatorname{Ext}_R^i(k, R^+) = 0
	\)
	for some \(i > d\), then \(R\) is a UFD.
\end{proposition}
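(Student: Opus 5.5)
We follow the template of the two preceding propositions: first convert the single Ext-vanishing into finite injective dimension of \(R^+\), then push this down to \(R\) through the splitting of \(R\to R^+\), and finally appeal to Grothendieck's parafactoriality theorem for complete intersections.

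\textbf{Step 1: finite injective dimension of \(R^+\).} Since \(R\) has an isolated singularity, \(R_{\mathfrak p}\) is regular for every \(\mathfrak p\neq\mathfrak m\). Together with \(\operatorname{Ext}_R^i(k,R^+)=0\) for some \(i>d\), this lets us apply \cite[1.2]{IYENGAR} to get \(\operatorname{Ext}_R^j(k,R^+)=0\) for all \(j\ge i\). Then \cite[Theorem VI.9]{sc3} (equivalently \cite[page 226]{sc3}) gives \(\id_R(R^+)<\infty\).

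\textbf{Step 2: \(R\) is Gorenstein.} By \((S_2)\) and the isolated singularity, \(R\) satisfies \((R_1)\) and \((S_2)\). We should also reduce to \(R\) being a domain; this is presumably implicit, since \(R^+\) is only defined for domains. Hence \(R\) is normal. Since \(\mathbb{Q}\subseteq R\), \cite[Lemma 2.1]{p} shows that \(R\to R^+\) splits. Therefore \(\operatorname{Ext}_R^j(k,R)\) is a direct summand of \(\operatorname{Ext}_R^j(k,R^+)=0\) for all \(j\ge i\). By \cite{Ro2}, nonzero Bass numbers of \(R\) fill the interval \([\depth R,\id R]\), so \(\id_R(R)<\infty\) and \(R\) is Gorenstein.

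\textbf{Step 3: \(R\) is a complete intersection.} This is the main obstacle, and I would handle it by the classical Kunz-type result that a Gorenstein almost complete intersection is a complete intersection. Indeed, if \(R=S/I\) with \(S\) regular and \(\mu(I)=\operatorname{ht}I+1\), then Gorensteinness forces \(I\) to be generated by a regular sequence. Completion preserves all hypotheses, so we may assume \(R\) is complete if needed.

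\textbf{Step 4: factoriality.} Now \(R\) is a complete intersection, is regular in codimension \(\le 3\) (it has an isolated singularity and \(\dim R\ge 4\)), and is normal. Grothendieck's theorem (SGA2, Samuel's conjecture) says such a ring is factorial: a local complete intersection that is regular in codimension \(\le 3\) is a UFD. This gives the conclusion. If a more self-contained route is wanted, one could instead try Step 1 with the reflexive rank one modules \(\mathfrak p^{(-n)}\) sitting inside \(R^+\) as summands of module-finite extensions, as in the proof of Proposition \ref{thm:finite-CM-type}. I would expect that route to be harder, so Grothendieck's theorem is the plan.
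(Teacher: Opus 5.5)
Your proposal is correct and is essentially the paper's argument. $(R_1)+(S_2)$ gives normality, characteristic zero makes $R\to R^+$ split, Roberts' theorem on Bass numbers then yields Gorensteinness, Kunz's theorem turns the almost complete intersection into a complete intersection, and Grothendieck's theorem (regular, hence factorial, in codimension $\le 3$ because $\dim R\ge 4$) gives the UFD property. The only difference is your Step 1, which is superfluous: a single vanishing $\operatorname{Ext}_R^i(k,R)=0$ with $i>d\ge\depth R$ already forces $\id_R(R)<\infty$ by Roberts, so the paper never needs finite injective dimension of $R^+$ here.
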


\begin{proof}
	It follows that \(R\) is normal, and since \(R\) contains \(\mathbb{Q}\), it is a splinter. Then the vanishing \(\operatorname{Ext}_R^i(k, R^+) = 0\) implies that \(R\) is Gorenstein. A theorem of Kunz \cite{kunz} (see also \cite[5.13]{moh5} for an alternative argument) states that any Gorenstein almost complete intersection is actually a complete intersection. Thus \(R\) is a complete intersection.
	Since \(R\) has an isolated singularity and \(\dim R \ge 4\), for every non-maximal prime \(\mathfrak{p}\) of height at most \(3\), the localization \(R_{\mathfrak{p}}\) is regular (hence a UFD). Therefore the Grothendieck criterion applies (see \cite{sga2}), and we conclude that \(R\) is a UFD.
\end{proof}

\begin{fact}(See Rotman \cite{Rot}).
A submodule \(N \subset M\) is pure iff for any finite free chain complex \(F_\bullet\), the map 
$\alpha : \operatorname{Hom}_R(F_\bullet, N) \to \operatorname{Hom}_R(F_\bullet, M)$
sends coboundaries to coboundaries; equivalently, \(H^i(\alpha)\) is injective for all \(i\). 
Taking \(F_\bullet\) to be a finite free resolution of \(R/\mathfrak{m}\), purity gives the short exact sequence
$	0 \to \operatorname{Ext}_R^i(R/\mathfrak{m}, N) \to \operatorname{Ext}_R^i(R/\mathfrak{m}, M) \to \operatorname{Ext}_R^i(R/\mathfrak{m}, M/N) \to 0.$
\end{fact}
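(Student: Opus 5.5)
The plan is to use the equational characterization of purity. Recall that \(N\subset M\) is pure exactly when every finite system of linear equations \(\sum_{j} a_{lj}X_j=n_l\), with \(a_{lj}\in R\) and \(n_l\in N\), that has a solution in \(M\) already has a solution in \(N\); equivalently, \(\Hom_R(P,M)\to\Hom_R(P,M/N)\) is surjective for every finitely presented \(P\). This is the standard equivalence in \cite{Rot}, and I will take it as the working definition. Everything else is a translation of the coboundary condition into such a system.

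\emph{Step 1: injectivity of \(H^i(\alpha)\).} Let \(F_\bullet\) be a complex of finitely generated free modules with differentials \(d_i\colon F_i\to F_{i-1}\). Take a cocycle \(f\colon F_i\to N\), so that \(f\circ d_{i+1}=0\). Suppose its image in \(\Hom_R(F_i,M)\) is a coboundary, i.e.\ \(f=g\circ d_i\) for some \(g\colon F_{i-1}\to M\). Fix bases \(e_1,\dots,e_s\) of \(F_i\) and \(e'_1,\dots,e'_t\) of \(F_{i-1}\), and write \(d_i(e_l)=\sum_j a_{lj}e'_j\). The condition \(f=g\circ d_i\) then reads \(\sum_j a_{lj}\,g(e'_j)=f(e_l)\) for \(l=1,\dots,s\). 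This is a finite linear system in the unknowns \(X_j=g(e'_j)\) with constants \(f(e_l)\in N\), and it is solvable in \(M\). By purity it is solvable in \(N\), which gives \(g'\colon F_{i-1}\to N\) with \(f=g'\circ d_i\). Hence \(f\) is already a coboundary in \(\Hom_R(F_\bullet,N)\), so \(H^i(\alpha)\) is injective.

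For the converse, apply the injectivity hypothesis to the two-term complex \(R^s\xrightarrow{(a_{lj})}R^t\). A system solvable in \(M\) then becomes a cocycle in \(N\) that is a coboundary in \(M\), hence a coboundary in \(N\), which is exactly purity.

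\emph{Step 2: the short exact sequences.} Since \(R\) is Noetherian, \(R/\mathfrak{m}\) has a resolution \(F_\bullet\) by finitely generated free modules. It may be of infinite length, but \(\Ext^i_R(R/\mathfrak m,-)\) depends only on the finite truncation \(F_{i+1}\to F_i\to F_{i-1}\), so Step 1 applies in every degree. Applying \(\Hom_R(F_\bullet,-)\) to \(0\to N\to M\to M/N\to 0\) gives a short exact sequence of complexes, because each \(F_j\) is free. This yields the long exact sequence
\[
\cdots\to\Ext^{i-1}_R(R/\mathfrak m,M/N)\xrightarrow{\delta}\Ext^i_R(R/\mathfrak m,N)\xrightarrow{H^i(\alpha)}\Ext^i_R(R/\mathfrak m,M)\to\Ext^i_R(R/\mathfrak m,M/N)\xrightarrow{\delta}\cdots.
\]
By Step 1 every \(H^i(\alpha)\) is injective, so \(\operatorname{im}\delta=\ker H^i(\alpha)=0\) and every connecting map vanishes. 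The long exact sequence therefore breaks into the stated short exact sequences.

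The only step with real content is the first: one must recognize the coboundary condition \(f=g\circ d_i\) as a finite linear system with constants in \(N\). This works because both \(F_i\) and \(F_{i-1}\) are finitely generated free modules. Everything after that is formal homological algebra.
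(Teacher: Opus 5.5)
Your proposal is correct and follows the route the paper indicates: the paper cites Rotman for the cohomological characterization of purity and applies it to a resolution of \(R/\mathfrak{m}\) by finitely generated free modules; you do the same, additionally deriving the characterization from the equational criterion and showing explicitly that injectivity of every \(H^i(\alpha)\) kills the connecting maps in the long exact \(\Ext\) sequence. You also correctly read the literally vacuous phrase ``sends coboundaries to coboundaries'' as the real condition: a cocycle with values in \(N\) that becomes a coboundary in \(M\) is already a coboundary in \(N\).
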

Perhaps the initial implication of \(\beta_1(R^+) = 0\) was the purity property, discovered by Schoutens \cite[Theorem 2.2]{sc1}. Let us assume its variant in the prime characteristic case. We denote the Frobenius map $F:R\to R$, sending $x$ to $x^p$. This is indeed a morphism, as $\Char(R)=p>0$.
To this end, by \(F(R)\), we mean \(R\) as a group equipped with left and right scalar multiplication from \(R\) given by
\(
a \cdot r \star b = ab^p r,
\)
where \(a, b \in R\) and \(r \in F(R)\). By "\(R\) is \(F\)-pure," we mean that \(R \subseteq F(R)\) is pure. Recall that
\[
R^\infty := \varinjlim \bigl( R \xrightarrow{F} R \xrightarrow{F} R \xrightarrow{F} \cdots \bigr).
\]

The following completes the proof of Theorem~\ref{18}.

\begin{proposition}\label{5.15}
	Suppose \(R\) is a \(d\)-dimensional \(F\)-pure ring with \(\operatorname{char} R = p > 0\) and
	\(
	\operatorname{Ext}_R^i(R/\mathfrak{m}, R^\infty) = 0
	\)
	for some \(i > d\). Then \(R\) is Gorenstein. In particular, \(R\) is regular provided it is of isolated singularity.
\end{proposition}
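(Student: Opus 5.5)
The plan is to transport the vanishing from \(R^\infty\) down to \(R\), using purity of \(R\to R^\infty\) and the Fact above on Ext and pure submodules. After that, the structure of Bass numbers of \(R\) should force Gorensteinness.

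First, \(F\)-purity gives purity of \(R\subseteq R^\infty\). Each transition map \(R\xrightarrow{F}R\) is pure by hypothesis, and compositions of pure maps are pure. Purity is also preserved by direct limits: if \(M\otimes R\to M\otimes F^e(R)\) is injective for every \(e\), then \(M\otimes R\to \varinjlim_e M\otimes F^e(R)=M\otimes R^\infty\) is injective. Hence \(R\to R^\infty\) is pure.

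Next, apply the Fact (Rotman) with \(N=R\) and \(M=R^\infty\), taking \(F_\bullet\) to be a free resolution of \(R/\mathfrak m\) by finite free modules truncated at a high stage. This gives injections
\[
\operatorname{Ext}_R^j(R/\mathfrak m,R)\hookrightarrow \operatorname{Ext}_R^j(R/\mathfrak m,R^\infty)
\]
for every \(j\). The Fact is stated for finite free complexes, but \(\operatorname{Ext}^j\) depends only on a finite piece of the resolution, so the truncation is harmless. In particular \(\mu_i(R)=0\) for our \(i>d\). Alternatively, \(\operatorname{Ext}^i_R(R/\mathfrak m,-)\) commutes with direct limits since \(R/\mathfrak m\) is finitely presented over a noetherian ring; then \(\operatorname{Ext}^i(k,R^\infty)=\varinjlim \operatorname{Ext}^i(k,F^e_*R)\), and purity makes the maps from \(\operatorname{Ext}^i(k,R)\) injective.

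Then, since \(\mu_i(R)=0\) for some \(i>d\ge \depth R\), Roberts' theorem \cite{Ro2} on the vanishing of Bass numbers (as used in the previous propositions) gives \(\id_R(R)<\infty\). So \(R\) is Gorenstein.

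For the isolated singularity clause, \(R^\infty\) is now a module over a Gorenstein ring with \(\operatorname{Ext}^i(k,R^\infty)=0\) for some \(i>d\). By \cite[1.2]{IYENGAR} this vanishing persists for all \(j\ge i\), and \cite[Theorem VI.9]{sc3} then gives \(\id_R(R^\infty)<\infty\), exactly as in Proposition~\ref{811}. Over a Gorenstein ring, finite injective dimension is equivalent to finite flat dimension, so \(R^\infty\) has finite flat dimension over \(R\). Since \(R^\infty\) is a direct limit of the Frobenius twists \(F^e_*R\), this amounts to flat dimension of \(F^e_*R\) being controlled. By Kunz's theorem, or its non-finite version for \(R^\infty\) due to Bhatt--Iyengar--Ma \cite{Bim}, finite flat dimension of \(R^\infty\) forces \(R\) regular. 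One could alternatively cite \cite[Example 3.17]{ryo} as in Proposition~\ref{811}.

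The main obstacle is this last step: passing from finite injective dimension to regularity. It needs the Gorenstein equivalence between finite injective and finite flat dimension for non-finitely generated modules (Jensen/Enochs). I would invoke the Bhatt--Iyengar--Ma characterization of regularity via \(\operatorname{Tor}^R_{>0}(k,R^\infty)\) to close the argument.
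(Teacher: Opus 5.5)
Your proposal is correct and follows the paper's proof essentially step for step. The steps are the same: purity of $R\to R^\infty$ from $F$-purity and direct limits, then the Fact to embed $\operatorname{Ext}^i_R(R/\mathfrak m,R)$ into $\operatorname{Ext}^i_R(R/\mathfrak m,R^\infty)$, then Roberts' theorem for Gorensteinness, then finite injective (hence, over a Gorenstein ring, finite flat) dimension of $R^\infty$, and finally a regularity criterion for $R^\infty$.

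The differences are only in the references. You reach $\operatorname{id}_R R^\infty<\infty$ via \cite[1.2]{IYENGAR} and \cite[Theorem VI.9]{sc3}, as in Proposition~\ref{811}, where the paper cites \cite[page 226]{sc3}. You close with \cite{Bim} instead of \cite[3.6]{ABEr}. Your aside that finite flat dimension of $R^\infty$ ``amounts to'' control of $\operatorname{fd} F^e_*R$ is not accurate, but the argument does not use it.
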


\begin{proof}
	Since purity behaves well with respect to direct limits, and \(R\) is \(F\)-pure, we see that \(0 \to R \to R^\infty\) is pure. Now, the above fact implies that
	\[
	\operatorname{Ext}_R^i(R/\mathfrak{m}, R) \subseteq \operatorname{Ext}_R^i(R/\mathfrak{m}, R^\infty).
	\]
	Hence \(\operatorname{Ext}_R^i(R/\mathfrak{m}, R) = 0\). Again, by \cite{Ro2}, \(R\) is Gorenstein. In particular, it is Cohen--Macaulay. This, together with \(\operatorname{Ext}_R^i(k, R^\infty) = 0\), enables us to use \cite[page 226]{sc3} and deduce that \(R^\infty\) has finite injective dimension.
By the Gorenstein property, we see that \(R^\infty\) has finite flat dimension. In particular, \(\operatorname{Tor}_i^R(R/\mathfrak{m}, R^\infty) = 0\) for some \(i\). By \cite[3.6]{ABEr}, \(R\) is regular.
\end{proof}

\begin{remark}
For example
any  Stanley-Reisner ring
is F-pure, and any F-pure ring is reduced. In particular, if $R$ is 1-dimensional F-pure and local,
then $R$ is of isolated singularity.
\end{remark}
Let us remove the \(F\)-purity assumption from Proposition~\ref{5.15}.

\begin{proposition}\label{517}
	Suppose \(R\) is a \(d\)-dimensional ring with isolated singularity, \(\operatorname{char}(R) = p > 0\), and
	\(
	\operatorname{Ext}_R^i(R/\mathfrak{m}, R^\infty) = 0
	\)
	for some \(i > d\). Then \(R\) is regular.
\end{proposition}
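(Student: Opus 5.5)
The plan is to mimic Proposition~\ref{811}, with $R^+$ replaced by $R^\infty$, and thereby avoid the $F$-purity assumption of Proposition~\ref{5.15}. First, I would apply \cite[1.2]{IYENGAR} to the single vanishing $\operatorname{Ext}_R^i(R/\mathfrak{m},R^\infty)=0$ with $i>d$. This yields $\operatorname{Ext}_R^j(R/\mathfrak{m},R^\infty)=0$ for all $j\ge i$. The rigidity statement there is for arbitrary modules once the index exceeds the dimension, which is exactly how it was used for $R^+$.

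Next I would bring in the isolated singularity. For every non-maximal prime $\mathfrak{p}$, the ring $R_\mathfrak{p}$ is regular, so $\mathrm{Sing}(R)\subseteq\{\mathfrak{m}\}$. The Ext-analogue of Fact~\ref{shh1}, namely \cite[Theorem VI.9]{sc3}, only requires vanishing of Bass numbers at singular primes in high degrees. It would then give $\id_R(R^\infty)<\infty$. If the cited theorem is phrased with Bass numbers $\mu_j(\mathfrak{p},-)$ for all $\mathfrak{p}\in\mathrm{Sing}(R)$, only $\mathfrak{p}=\mathfrak{m}$ needs checking, and the previous step handles it.

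The final step passes from $\id_R(R^\infty)<\infty$ to regularity. Here I would invoke the prime characteristic characterization of regularity via perfect closures. For $R^+$ this was \cite[Example 3.17(i)]{ryo}, and I expect the same reference, or its proof, to cover $R^\infty$: a Kunz-type theorem states that $R$ is regular if and only if $R^\infty$ has finite injective dimension, equivalently if $R^\infty$ (or $F_*^eR$ for $e\gg0$) has finite injective dimension. One route is to note that $R^\infty=\varinjlim F^e_*R$ and that the Frobenius twists $F^e_*R$ in the direct system are finitely generated if $R$ is F-finite. The homological finiteness would then transfer to some $F^e_*R$, and Kunz-type results for injective dimension of Frobenius pushforwards \cite{ABEr} would conclude.

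I expect this last transfer to be the main obstacle, because finite injective dimension of a direct limit does not obviously descend to the terms, and $R$ need not be F-finite. If no direct citation is available, my first attempt would be to reduce to the complete case, where $R$ is F-finite. After that, I would use Matlis duality to convert $\id(R^\infty)<\infty$ into finite flat dimension of $(R^\infty)^\vee$, and apply the Tor version \cite[3.6]{ABEr} as in the proof of Proposition~\ref{5.15}.
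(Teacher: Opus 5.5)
Your main line is exactly the paper's proof. \cite[1.2]{IYENGAR} propagates the vanishing to all $j\ge i$, then \cite[Theorem VI.9]{sc3} together with the isolated singularity gives $\id_R(R^\infty)<\infty$, and finally the paper invokes \cite[Example 3.17(i)]{ryo} for $R^\infty$, just as it did for $R^+$ in Proposition~\ref{811}.

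Your fallback is not needed, and it would not work as written. First, $(R^\infty)^\vee$ is only an $R^\infty$-module, not $R^\infty$ or a perfect algebra, so \cite[3.6]{ABEr} says nothing about its flat dimension; this is the Matlis-duality obstruction the introduction flags for $R^+$. Second, completing does not make $R$ F-finite unless $[k:k^p]<\infty$.
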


\begin{proof}
	The result of \cite[1.2]{IYENGAR} applied to \(\operatorname{Ext}_R^i(R/\mathfrak{m}, R^\infty) = 0\) yields that
	\(
	\operatorname{Ext}_R^j(R/\mathfrak{m}, R^\infty) = 0
	\)
	for all \(j \ge i\). This enables us to use \cite[Theorem VI.9]{sc3} and deduce that \(R^\infty\) has finite injective dimension. By \cite[Example 3.17(i)]{ryo}, the ring \(R\) is regular, as claimed.
\end{proof}

Let \(\mathfrak{p} \in \operatorname{Spec}(R)\). By \(k(\mathfrak{p})\), we mean the residue field of \(R_{\mathfrak{p}}\). Similar to {Proposition} \ref{517}, we can show the following.

\begin{corollary}\label{isom}
	Let \((R, \mathfrak{m})\) be a ring of prime characteristic. Suppose that for some \(i > \dim(R)\) we have
	\(
	\operatorname{Ext}_{R}^i(k(\mathfrak{p}), (R^\infty)_{\mathfrak{p}}) = 0
	\)
	for all \(\mathfrak{p}\) in the singular locus of \(R\). Then \(R\) is regular.
\end{corollary}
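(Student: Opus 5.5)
The plan is to mimic the proof of Proposition~\ref{517}, replacing the single maximal ideal by each prime of the singular locus. We then feed the resulting local vanishing data into Schoutens' criterion for finite injective dimension, the Ext-analogue of Fact~\ref{shh1}, i.e.\ \cite[Theorem VI.9]{sc3}.

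\textbf{Step 1: localize.} Fix $\mathfrak{p}\in\mathrm{Sing}(R)$. Localization commutes with the perfect closure, so $(R^\infty)_{\mathfrak{p}}\cong (R_{\mathfrak{p}})^\infty$. Moreover, $\operatorname{Ext}_R^i(k(\mathfrak{p}),(R^\infty)_{\mathfrak{p}})\cong\operatorname{Ext}_{R_\mathfrak{p}}^i(k(\mathfrak{p}),(R_\mathfrak{p})^\infty)$, since $k(\mathfrak{p})$ and $(R^\infty)_\mathfrak{p}$ are $R_\mathfrak{p}$-modules and $R\to R_\mathfrak{p}$ is flat. Because $i>\dim R\ge \dim R_\mathfrak{p}=\mathrm{ht}(\mathfrak{p})$, the hypothesis is a vanishing of a Bass number in degree above the dimension of the local ring $R_\mathfrak{p}$.

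\textbf{Step 2: rigidity.} Apply \cite[1.2]{IYENGAR} over $R_\mathfrak{p}$ to get $\operatorname{Ext}_{R_\mathfrak{p}}^j(k(\mathfrak{p}),(R_\mathfrak{p})^\infty)=0$ for all $j\ge i$, for every $\mathfrak{p}\in\mathrm{Sing}(R)$. Primes outside the singular locus need no condition. Schoutens' criterion \cite[Theorem VI.9]{sc3} is stated exactly in terms of Bass numbers $\mu_j(\mathfrak{p},-)$ at primes in $\mathrm{Sing}(R)$ in all large degrees, so it yields $\id_R(R^\infty)<\infty$. If the cited version instead needs vanishing in a full range tied to $\mathrm{ht}(\mathfrak{p})$, I would first pass to a cosyzygy of $R^\infty$ in a minimal injective resolution. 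This shifts the degrees and does not affect finiteness of injective dimension, just as the syzygy trick was used in the proof of Theorem~\ref{thm:isolated}.

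\textbf{Step 3: conclude.} Finiteness of $\id_R(R^\infty)$ forces regularity by \cite[Example 3.17(i)]{ryo}, exactly as in Proposition~\ref{517}. This is the Ext-version of Kunz's theorem: the Frobenius limit $R^\infty$ has finite injective dimension only over regular rings.

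The main obstacle is Step 2. One must check that \cite[1.2]{IYENGAR} applies to the non-finitely generated module $(R_\mathfrak{p})^\infty$ with residue field $k(\mathfrak{p})$, in degrees above $\dim R_\mathfrak{p}$. One must also match its output precisely with the hypotheses of \cite[Theorem VI.9]{sc3}, in particular the requirement that the vanishing hold uniformly in a common large degree over all singular primes. Taking the single index $i>\dim R$ uniformly for all $\mathfrak{p}$ is designed to handle this.
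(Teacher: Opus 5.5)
Your proposal is correct and follows essentially the paper's route. The paper proves this corollary only by saying it goes ``similar to Proposition~\ref{517}'': rigidity via \cite[1.2]{IYENGAR}, then \cite[Theorem VI.9]{sc3} to get $\operatorname{id}_R(R^\infty)<\infty$, then \cite[Example 3.17(i)]{ryo}; your localization at each $\mathfrak{p}\in\mathrm{Sing}(R)$, with the single index $i>\dim R\ge \operatorname{ht}\mathfrak{p}$ used uniformly, is exactly the adaptation that this requires.
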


\begin{conjecture}
Suppose \(R\) is $d$-dimensional local ring and \(\operatorname{char} (R) = p > 0\) and \(\operatorname{Ext}_R^i(R/\mathfrak{m}, R^\infty) = 0\) for some $i>d$. Then \(R\) is regular.
\end{conjecture}

	\begin{acknowledgement}
We thank Shravan Patankar for his comments and valuable remarks.
	\end{acknowledgement}

\end{document}